\newcolumntype{?}{!{\vrule width 1pt}}
\definecolor{DarkBlue}{RGB}{20,20,200}
\definecolor{DarkGreen}{RGB}{20,120,20}
\renewcommand{\kappa}{\varkappa}
\renewcommand{\phi}{\varphi}
\newcommand{\e}{\varepsilon}
\newcommand{\NN}{\mathbb{N}}
\newcommand{\NNo}{\mathbb{N}\cup\{0\}}
\newcommand{\RR}{\mathbb{R}}
\renewcommand{\SS}{\mathcal{S}}
\renewcommand{\d}{{\rm d}}
\newcommand{\diam}{\operatorname{diam}}
\renewcommand{\H}{\mathcal{H}}
\newcommand{\R}{\mathfrak{R}}
\newcommand{\K}{\mathcal{K}}
\newcommand{\I}{\mathcal{I}}
\newcommand{\rank}{\mathfrak{r}}
\newcommand{\h}{\mathfrak{h}}
\renewcommand{\S}{\mathfrak{S}}
\newcommand{\act}{\curvearrowright}
\newcommand{\supp}{\mathrm{supp}}
\newcommand{\hy}{\text{-}}
\newcommand{\pred}{\mathcal{H}_s(G)_{\mathrm{pre}}}
\newcommand{\pmax}{\mathcal{H}_s(G)_{\mathrm{max}}}
\newcommand{\T}{\mathcal{T}}
\newcommand{\V}{\mathcal{V}}
\newcommand{\st}{\mathrm{St}}
\newtheorem{thm}{Theorem}[section]
\newtheorem*{thm*}{Theorem}
\newtheorem{lem}[thm]{Lemma}
\newtheorem{prop}[thm]{Proposition}
\theoremstyle{definition}
\newtheorem{defn}[thm]{Definition}
\theoremstyle{remark}
\newtheorem{rem}[thm]{Remark}
\newfont{\eufm}{eufm10}
\begin{document}
\title{\vspace*{-5mm}Borel asymptotic dimension of the Roller boundary of finite dimensional CAT(0) cube complexes}
\author{Koichi Oyakawa}
\date{}

\maketitle
\vspace*{-4mm}

\begin{abstract}
We prove that for any countable finite dimensional CAT(0) cube complex, the Borel median graph on its Roller compactification has the Borel asymptotic dimension bounded from above by its dimension.
\end{abstract}

\section{Introduction}

Borel complexity measures how complicated it is to describe equivalence relations on standard Borel spaces. This complexity is measured by comparing two Borel equivalence relations by Borel reducibility and by establishing land mark properties of Borel equivalence relations. Smoothness, hyperfiniteness, and treeability are among these landmark properties, the latter being more complicated.

One of actively studied Borel equivalence relations is the one defined by connected components of a Borel graph. The Borel graph is a collection of connected graphs as a set but admits a nice measurable structure, which allows us to ask definability of various graph theoretic and geometric notions. In \cite{CPTT23}, Chen-Poulin-Tao-Tserunyan proved several results showing that every locally finite Borel graph whose large-scale geometry is ``tree-like" induces a treeable equivalence relation. One of their crucial results, which derived the other results, was that every Borel median graphing of a countable Borel equivalence relation (CBER) with finite hyperplanes (i.e. the length of every strip of squares is finite) has a Borel subtreeing. They also proved that the CBER induced by a Borel median graph with stronger conditions is hyperfinite. The median graph is another name for (the 1-skeleton of) a CAT(0) cube complex, which was intensively studied in geometry. It can be considered either as a higher-dimensional generalization of a tree or as a tree-like generalization of a grid. Therefore, Borel median graphs sit a priori in a higher Borel complexity than Borel trees.

The purpose of this paper is to study the Borel complexity of Borel median graphs that appear naturally in geometry, that is, the Roller compactification of a CAT(0) cube complex. It turns out that this Borel complexity is much simpler than treeability. More precisely, we prove the following theorem.
\begin{thm}\label{thm:main}
    For any countable CAT(0) cube complex $X$ of dimension $n \in \NNo$, the Borel median graph on the Roller boundary $\R(X)$ of $X$ has Borel asymptotic dimension at most $n$ and its Borel equivalence relation consisting of connected components is smooth.
\end{thm}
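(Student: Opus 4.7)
I would handle the two claims --- Borel asymptotic dimension and smoothness --- separately, though both lean on a Borel well-ordering of the countably many hyperplanes of $X$ and on the cubical-component structure of $\R(X)$.

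For smoothness, my plan is to exhibit a Borel complete invariant for the connected component equivalence $E$ on $\R(X)$. Given $\xi \in \R(X)$ with connected component $C_\xi$, call a hyperplane $h$ of $X$ \emph{$\xi$-stable} if no edge of $C_\xi$ crosses $h$, equivalently if every $\eta \in C_\xi$ orients $h$ in the same way as $\xi$. Since $C_\xi$ is at most countable (as a connected component of the locally countable Borel median graph on the standard Borel space $\R(X)$) and edge-crossing is Borel, $\xi$-stability is a Borel condition in $(\xi,h)$. Defining
\[
    \Sigma(\xi)(h) \;=\; \begin{cases} \text{the orientation of } h \text{ under } \xi, & \text{if $h$ is $\xi$-stable}, \\ *, & \text{otherwise}, \end{cases}
\]
produces a Borel map $\Sigma \colon \R(X) \to \{0,1,*\}^{\mathrm{Hyp}(X)}$ into a standard Borel space. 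That $\Sigma$ is $E$-invariant is immediate, so smoothness reduces to the converse implication: if $\Sigma(\xi) = \Sigma(\eta)$ then $\xi \mathrel{E} \eta$. This is the geometric content, and the finite-dimensionality of $X$ enters crucially here.

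For the Borel asymptotic dimension bound, my plan is to adapt the classical proof that $\mathrm{asdim}(X) \le \dim(X)$ for a finite-dimensional CAT(0) cube complex (Wright; Brodzki--Campbell--Guentner--Niblo--Wright) to the Borel category. For each scale $R \ge 0$ I would build a Borel cover $\mathcal{U}_R$ of $\R(X)$ whose pieces have diameters uniformly bounded in $R$ and $n$, and with $R$-multiplicity at most $n+1$. The construction organizes boundary points by the configuration of hyperplanes crossed within combinatorial $R$-balls around them, and uses the fixed Borel well-ordering of hyperplanes to canonically assign a color in $\{0,1,\ldots,n\}$ to each point; the bound $n$ on the number of pairwise crossing hyperplanes --- from the dimension hypothesis --- then yields the multiplicity bound $n+1$.

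I expect the main obstacle to be the rigidity lemma behind smoothness: proving that two boundary points sharing the same stable hyperplanes with matching orientations actually lie in the same cubical component of $\R(X)$. The natural approach is induction on $n$, exploiting that each cubical component of $\R(X)$ is itself a CAT(0) cube complex of dimension strictly less than $n$, so one can quotient by the already-stable directions and apply the inductive hypothesis to a lower-dimensional residue. A secondary difficulty is preserving the tight multiplicity bound $n+1$ (rather than a larger constant) in the Borel asdim construction, since we work with neighborhoods in the median graph on $\R(X)$ and must transfer the hyperplane combinatorics from $X$ to its boundary faithfully.
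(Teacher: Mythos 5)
Your smoothness argument is essentially the paper's: the invariant $\Sigma$ you define carries exactly the same information as the canonical ``gate'' $\xi_x\in[x]_E$ that the paper takes from \cite[Proposition 2.9]{Gen21} (the point $\xi_x$ orients $h$ away from the basepoint iff $h$ is stable for $[x]_E$ and oriented away from it), and the Borel measurability argument via countability of components is the same. The one concrete problem in your sketch of the ``rigidity lemma'' is the claim that each component of $\R(X)$ is a CAT(0) cube complex of dimension strictly less than $n$: this is false, since $X^{(0)}$ itself is a component of $\R(X)$ of dimension exactly $n$, so the proposed induction does not get off the ground as stated. What you actually need is the existence of the gate (equivalently: only finitely many unstable hyperplanes separate $\xi$ from the stable configuration), which is a nontrivial finite-dimensionality statement; the paper simply cites it.

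The asymptotic dimension half has a genuine gap: you have restated the conclusion rather than outlined a proof. Saying you will ``build a Borel cover whose pieces have uniformly bounded diameter and $R$-multiplicity at most $n+1$'' by ``organizing boundary points by the configuration of hyperplanes crossed within combinatorial $R$-balls'' and breaking ties with a Borel well-ordering is not how Wright's argument works, and there is no indication such a direct construction exists even non-Borel-ly. Wright's proof produces, for every $\e>0$, an $\e$-Lipschitz cobornologous map onto a quotient cube complex by iterating controlled colorings and Sageev--Roller quotients, and only afterwards converts a sufficiently contracting map into a bounded cover via a triangulation of the cubical realization. Making each of these steps Borel is the actual content of the paper: one needs a standard Borel space of hyperplanes, Borel rank vectors and colorings, Borel quotients $X_{\K_c}$, a Borel structure on the glued complex $\widetilde X$ and on the ambient complex $C(G)$, and --- the delicate point --- a Borel version of the projection $C(X_{\K_c})\to X_{\K_c}$, which requires choosing the total orders in the projection theorem component-by-component in a uniformly Borel way. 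None of this is addressed. You should also confront the fact that the median graph on $\R(X)$ need not be locally finite even when $X$ is (Remark \ref{rem:locally infinite Roller boundary}), so general transfer results such as \cite[Theorem 4.2]{CJM23} are unavailable, and the Borel asymptotic dimension of a Borel graph is not determined by the asymptotic dimension of its components (there are hyperfinite Borel trees of infinite Borel asymptotic dimension); hence ``adapting the classical proof'' cannot be treated as routine.
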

Note that we don't assume local finiteness of a graph nor finiteness of hyperplanes (in the sense of \cite{CPTT23}). Borel asymptotic dimension was recently introduced by Conley, Jackson, Marks, Seward, and Tucker-Drob in \cite{CJM23}. It can be considered as a stronger notion than hyperfiniteness and played an important role in their progress toward Weiss's conjecture. It also inspired a new direction of research, being established as a new landmark in the hierarchy of Borel complexity (see \cite{BY24,GH24,IS24,Wei24}).

Readers have to be careful that although it is known by \cite{Wri12} that finite dimensional CAT(0) cube complexes have finite asymptotic dimension, the Borel asymptotic dimension of a Borel graph cannot be predicted from the asymptotic dimension of its connected components in general. Indeed, the free part of the orbit equivalence relation of the Bernoulli shift of the free group $F_2$ on $2^{F_2}$ (see \cite[Theorem 7.4.10]{Gao09}) is treeable, hence the asymptotic dimension of every equivalence class is at most 1. However, this Borel tree has infinite Borel asymptotic dimension by \cite[Theorem 1.7]{CJM23} since it is not hyperfinite. Moreover, by \cite[Corollary 8.2]{CJM23}, there even exist hyperfinite Borel trees with infinite Borel asymptotic dimension (see \cite[Section 8]{CJM23}).

Due to this distinction between Borel and usual asymptotic dimensions, Theorem \ref{thm:main} is worthwhile. Theorem \ref{thm:main} also strengthens \cite{Wri12}, because the asymptotic dimension of every connected component of a Borel graph is bounded from above by the Borel asymptotic dimension of the Borel graph and the original CAT(0) cube complex is a connected component (in the Borel graph) of its Roller compactification.

In Theorem \ref{thm:main}, it is straightforward to prove smoothness and most of this paper is devoted to prove finite Borel asymptotic dimension. In fact, we prove the following more general result (see Definition \ref{def:graph} for notations).
\begin{prop}\label{prop:intro smooth median graph}
    Let $X$ be a standard Borel space and $G \subset X^2$ be a countable Borel median graph such that $E_G^X$ is smooth. Suppose that there exists $D \in \NNo$ such that every $G$-component is a CAT(0) cube complex of dimension at most $D$. Then, $\mathrm{asdim}_{\mathbf{B}}(X,G) \le D$.
\end{prop}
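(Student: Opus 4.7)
The plan is to replicate Wright's theorem \cite{Wri12} --- that a finite-dimensional CAT(0) cube complex has asymptotic dimension at most its dimension --- uniformly and Borel-measurably across the $G$-components, with smoothness of $E_G^X$ supplying the single non-geometric ingredient.

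Smoothness yields a Borel selector $s\colon X \to X$ with $s(x) \mathrel{E_G^X} x$ and $s(x) = s(y)$ whenever $x \mathrel{E_G^X} y$, i.e.\ a Borel basepoint $p(x) := s(x)$ inside each $G$-component. Because $G$ is a countable Borel graph, the graph distance $d_G$, the hyperplane equivalence relation on the edge set of $G$, the associated halfspaces, and the median ternary operation on each component are all Borel. Consequently, every quantity Wright defines from a pointed CAT(0) cube complex becomes a Borel function of $x$ once the basepoint is replaced by $s(x)$.

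To establish $\mathrm{asdim}_{\mathbf{B}}(X,G) \le D$, fix $r \ge 1$ and aim to construct Borel equivalence relations $F_0, \dots, F_D \subseteq E_G^X$ with uniformly bounded classes such that any $x,y$ with $d_G(x,y) \le r$ satisfy $x \mathrel{F_i} y$ for some $i$. Componentwise, Wright's argument delivers such a decomposition from the pointed cube-complex structure: pick a large integer $R = R(D,r)$, color hyperplanes (or cubes) into $D+1$ classes using a level function relative to the basepoint taken modulo $R$, and define $F_i$-equivalence classes as the strips cut out by the $i$th color. The dimension bound $D$ limits the number of pairwise transversal hyperplanes through any cube, which forces multiplicity at most $D+1$; the choice $R \gg r$ ensures $r$-close points lie in a common $F_i$-class; and each $F_i$-class has $G$-diameter bounded by a function of $R$ and $D$.

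The main obstacle will be turning Wright's componentwise geometric recipe into a single Borel definition on all of $X$. Once $s$ is in hand, all inputs (distances, halfspaces separating $s(x)$ from $x$, hyperplane classes, levels, colorings) are Borel, and $x \mathrel{F_i} y$ is determined by the Borel condition that no color-$i$ hyperplane separates $x$ from $y$. The remaining bookkeeping is to check that Wright's estimates --- which depend only on $D$ and $r$, not on the ambient component --- transfer uniformly; this is immediate because his arguments apply in any pointed CAT(0) cube complex of dimension $\le D$ with the same constants. The resulting $F_0, \dots, F_D$ then witness $\mathrm{asdim}_{\mathbf{B}}(X,G) \le D$.
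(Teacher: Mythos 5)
Your proposal misdescribes Wright's argument in a way that matters. Wright \cite{Wri12} does \emph{not} obtain the cover by ``coloring hyperplanes using a level function relative to the basepoint taken modulo $R$'' and declaring $F_i$-classes to be the resulting strips. That naive stratification fails to give asymptotic dimension $D$ once $D\ge 2$: unlike $\ZZ^D$, a general CAT(0) cube complex has no product structure controlling hyperplanes at a common level, the ``strip'' classes need not be uniformly bounded, and overcoming exactly this is the content of \cite{Wri12}. What Wright actually does is iterative and indirect: he builds a \emph{controlled coloring} $c\colon \H(X)\to\{0,1\}$ via a recursive rank stratification (Definitions \ref{def:H^d_n}--\ref{def:coloring}), passes to the Sageev--Roller quotient $X_{\K_c}$ with $\K_c=c^{-1}(0)$, interpolates the quotient map through an ambient cube complex $C(X_{\K_c})$ to get $\Psi_w$, projects back with a contractive $P$, and shows $P\circ\Psi_w$ is $\tfrac{3^{D-1}D}{3^{D-1}D+1}$-Lipschitz and cobornologous. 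Iterating $N$ times gives, for every $\e>0$, an $\e$-Lipschitz cobornologous map to a cube complex of dimension at most $D$, and the asymptotic dimension bound is then extracted via a nerve/triangulation argument on the \emph{target} (Lemma \ref{lem:Lipschitz map implies finite Bas}), not via a cover defined directly on $X$ by hyperplane levels. Since the componentwise step you invoke is not a true statement, there is nothing to transfer.

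Even granting the correct geometric input, the Borel transfer is far from the ``immediate'' bookkeeping you describe. The paper must construct a standard Borel space of hyperplanes $\H_s(G)$ (Section \ref{sec:Standard Borel space of hyperplanes}), put a canonical $\sigma$-algebra on the glued complex $\widetilde X$ and its superspace $C(G)$ (Sections \ref{sec:Gluing cubes to a Borel median graph}--\ref{sec: Adding cubes to a Borel median graph (smooth version)}), prove Borelness of the rank vector and of $c$ (Lemma \ref{lem:c is Borel}), and --- the genuinely delicate point --- make a Borel-uniform choice of total orders on $\H^{op}$ and $\H^<$ across components so that the iterated local projections $p^{op}_{h,k}$, $p^<_{h,k}$ assemble into a single Borel projection $P$ (Proposition \ref{prop:projection theorem Borel ver}). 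Finally the cover is produced by pulling back an $\S_n$-equivariant triangulation whose stars $\SS_\ell$ are Borel (Lemma \ref{lem:stars}); none of this is captured by ``the $F_i$ are Borel because their defining conditions are Borel.''
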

There are three points to note on Proposition \ref{prop:intro smooth median graph}. First, although both smoothness and finite Borel asymptotic dimension are stronger properties than hyperfiniteness, neither implies the other in general (see Remark \ref{rem:smooth and finite Borel asy dim}). Secondly, the smoothness condition in Proposition \ref{prop:intro smooth median graph} is the best possible, because it is impossible to weaken to the case when $E_G^X$ is hyperfinite because of the existence of the hyperfinite Borel trees with infinite Borel asymptotic dimension as mentioned above. Finally, \cite[Theorem 4.2]{CJM23} is not applicable to prove Proposition \ref{prop:intro smooth median graph}, because we don't assume local finiteness of Borel graphs. This point is important to prove Theorem \ref{thm:main}, because even if we restrict to the case of locally finite CAT(0) cube complexes, their Roller compactification still can be locally infinite (see Remark \ref{rem:locally infinite Roller boundary}).

The strategy to prove Proposition \ref{prop:intro smooth median graph} is a Borel version of Wright's argument in \cite{Wri12}. For this, we introduce a Borel extended metric space that we obtain by gluing cubes to a Borel median graph. That is, given a Borel median graph, we define a natural Borel structure on the union of CAT(0) cube complexes corresponding to connected components of the Borel median graph. This idea allows us to handle the metric of Borel median graphs in a more sophisticated way and is a new insight of this paper, which might be useful for future study of Borel median graphs.

This paper is organized as follows. In Section \ref{sec:Preliminary}, we discuss the necessary definitions and known results about CAT(0) cube complexes and descriptive set theory. In Section \ref{sec:Gluing cubes to a Borel median graph}, we introduce the idea of gluing cubes to a Borel median graph. In Section \ref{sec:Standard Borel space of hyperplanes}, we study smooth Borel median graphs and introduce a quotient of a Borel median graph. In Section \ref{sec: Adding cubes to a Borel median graph (smooth version)}, we continue the study in Section \ref{sec:Standard Borel space of hyperplanes} and introduce a certain Borel extended metric space to which we embed a Borel median graph. In Section \ref{sec:Borel measurability of Wright's construction}, we introduce a Borel version of the Wright's construction. In Section \ref{sec:Proof of the main theorem}, we prove Proposition \ref{prop:intro smooth median graph} and then Theorem \ref{thm:main}.

\noindent\textbf{Acknowledgment.}
I would like to thank Talia Fernós for questions that inspired this work.


\section{Preliminary}\label{sec:Preliminary}

\subsection{CAT(0) cube complex}\label{sec:CAT(0) cube complex}

In this section, we recall various notions related to CAT(0) cube complexes. Some of them are presented in a different way from the literature, but they are essentially the same and tailored to be applicable to Borel setting. For more on CAT(0) cube complexes, readers are referred to \cite{Sag95,Rol98,Wis12,Sag14,Wis21}, although the list is too far from exhaustive.

\begin{defn}
    For $n \in \NNo$, a $n$-\emph{cube} is $[0,1]^n$. (A $0$-cube is a singleton by definition.) A \emph{cube complex} is a CW complex such that each cell is a $n$–cube for some $n$ and each cell is attached using an isometry of some face. Given a cube complex $X$ and $n\in \NNo$, we denote by $X^{(n)}$ the set of all $n$-cubes of $X$ and define $n$-\emph{skeleton of} $X$ to be the subcompex formed by $\bigcup_{i=0}^n X^{(i)}$. The \emph{dimension} of a cube complex $X$ is the least $N \in \NNo\cup\{\infty\}$ such that $X = \bigcup_{i=0}^N X^{(i)}$. We say $X$ is \emph{finite dimensional} if $N<\infty$.
\end{defn}

\begin{rem}
    For a graph $X$, which is a 1-dimensional cube complex, $X^{(0)}$ is the set of vertices and $X^{(1)}$ is the set of unoriented edges.
\end{rem}

\begin{defn}\label{def:median graph}
    A \emph{median graph} is a connected graph $X$ such that for any vertices $x_1,x_2,x_3\in X^{(0)}$, there exists a unique vertex $y\in X^{(0)}$ satisfying
    \begin{align*}
        d_X(x_i,x_j) = d_X(x_i,y)+d_X(y,x_j)
    \end{align*}
    for any distinct $i,j \in \{1,2,3\}$, where $d_X$ is the graph metric on $X$. The vertex $y$ is called the \emph{median of} $x_1,x_2,x_3$. A subset $B\subset X^{(0)}$ is called \emph{convex} if for any $x,y \in B$ and any geodesic $\gamma$ in $X$ from $x$ to $y$, we have $\gamma^{(0)} \subset B$. A \emph{CAT(0) cube complex} is a cube complex obtained from a median graph $X$ by gluing cubes to $X$ as follows; first whenever $X$ has an induced square, attach to it a 2-cube isometrically along edges, then next whenever there is a subcomplex isomorphic to the 2-skeleton of a 3-cube, attach to it a 3-cube isometrically along faces, and continue this procedure inductively for all dimensional cubes.
\end{defn}

\begin{rem}
    A CAT(0) cube complex is often defined as a simply connected cube complex such that the link of every vertex is a flag simplicial complex. This definition is equivalent to Definition \ref{def:median graph} by \cite{Che00}. In other words, a median graph is the 1-skeleton of a CAT(0) cube complex.
\end{rem}

Hyperplanes of a CAT(0) cube complex is usually defined using mid-cubes. However, we take a different approach to define hyperplanes, which is essentially the same as that of mid-cubes but seems more compatible with Borel setting (see Section \ref{sec:Standard Borel space of hyperplanes}).

\begin{defn}\label{def:hyperplane}
    Let $X$ be a CAT(0) cube complex. For $e,f \in X^{(1)}$, we define $e\sim f$ if either $e=f$ holds or there exist $e=e_0, e_1,\cdots,e_n=f \in X^{(1)}$ such that $e_{i-1}$ and $e_i$ are opposite edges of a square in $X$ for any $i \in  \{1,\cdots,n\}$. We define a \emph{hyperplane} to be an equivalence class of the relation $\sim$ in $X^{(1)}$. We denote by $\H(X)$ the set of all hyperplanes of $X$ (i.e. $\H(X) = X^{(1)}/\sim$). Given $h \in \H(X)$, the graph obtained by removing the interior of all edges belonging to $h$ from the 1-skeleton of $X$ has exactly two connected components. We define \emph{halfspaces delimited by} $h$ to be the 0-skeletons of these connected components, and denote them by $h^-,h^+ \subset X^{(0)}$ (note $X^{(0)} = h^- \sqcup h^+$). We say that $h^-$ is a \emph{choice of orientation for} $h$. For $h \in \H(X)$, we define $h^{(0)} \subset X^{(0)}$ by $h^{(0)} = \bigcup_{e \in h}e^{(0)}$, where $e^{(0)}$ is the set of the two endpoints of an edge $e$. Given $A,B \subset X^{(0)}$, we say that $h \in \H(X)$ \emph{separates $A$ and $B$} if $(A \subset h^- \,\wedge\,B \subset h^+) \,\vee\, (B \subset h^- \,\wedge\,A \subset h^+)$.
\end{defn}

\begin{rem}\label{rem:separate convex sets}
    Let $X$ be a CAT(0) cube complex. For any $h \in \H(X)$, $h^{(0)}$, $h^-$, and $h^+$ are convex. For any $x \in X^{(0)}$ and any convex subset $B \subset X^{(0)}$, there exists a unique vertex $y \in B$ such that $d_X(x,y) = \min_{y'\in B} d_X(x,y')$. Also, $d_X(x,B)$ is equal to the number of hyperplanes that separate $x$ and $B$.
\end{rem}

\begin{defn}\label{def:Roller boundary}
    Let $X$ be a CAT(0) cube complex. Given a vertex $v \in X^{(0)}$, for every $h\in \H(X)$, there exists exactly one halfspace $\h_v$ such that $v \in \h_v$. This defines an element $\alpha_v \in \prod_{h \in \H(X)}\{h^-,h^+\}$ by $\alpha_v(h)=\h_v$ for any $h \in \H(X)$. The \emph{Roller compactification $\R(X)$ of} $X$ is defined by $\R(X) = \overline{\{\alpha_v \mid v \in X^{(0)}\}}$, where each $\{h^-,h^+\}$ admits the discrete topology of two elements and $\prod_{h \in \H(X)}\{h^-,h^+\}$ admits the product topology. Define a graph $G_{\R(X)}\subset \R(X)^2$ by (see Definition \ref{def:graph})
    \begin{align*}
        G_{\R(X)} = \big\{(\alpha,\beta) \in \R(X)^2 \mid \#\{h \in \H(X)\mid \alpha(h) \neq \beta(h)\}=1 \big\}.
    \end{align*}
\end{defn}

\begin{rem}\label{rem:Roller boundary}
    It is well known and not difficult to see that the map $X^{(0)}\ni v \mapsto \alpha_v \in \R(X)$ is injective and we have $\R(X) = \big\{\alpha\in \prod_{h \in \H(X)}\{h^-,h^+\} \mid \forall\,h,k \in \H(X),\, \alpha
    (h)\cap \alpha(k) \neq \emptyset\big\}$. Also, $\alpha,\beta \in \R(X)$ are in the same connected component of the graph $G_{\R(X)}$ if and only if $\#\{h \in \H(X)\mid \alpha(h) \neq \beta(h)\}<\infty$. Every connected component of $G_{\R(X)}$ is a median graph. Indeed, for any $\alpha_1,\alpha_2,\alpha_3 \in \R(X)$ in the same connected component of $G_{\R(X)}$, their median $\beta$ is given by the condition $\forall\, h \in \H(X),\,\#\big\{i \in \{1,2,3\} \mid \beta(h)=\alpha_i(h)\big\}\ge 2$.
\end{rem}

\begin{rem}\label{rem:locally infinite Roller boundary}
    We can construct a locally finite 2-dimensional CAT(0) cube complex $X$ such that $\R(X)$ has a vertex of infinite valency as follows. Let $X_0$ be an infinite-length strip of squares formed by the vertices $\{(k,0),(k,1) \mid k\in\NNo\}$ in $\RR^2$. For every $n \in \NN$, we prepare a new CAT(0) cube complex $X_n$ isomorphic to $X_0$ and attach $X_n$ to $X_0$ by identifying the line $\{(x,0)\mid x\ge 0\}$ in $X_n$ and the line $\{(x+n,0)\mid x\ge 0\}$ in $X_0$. Let $X$ be the resulting CAT(0) cube complex. The limit point in $\R(X)$ of the sequence $\{(k,0)\}_{k\in\NN}$ belonging to $X_0 \subset X$ has infinite valency.
\end{rem}

Finally, we recall the application of the Sageev-Roller duality to the quotient of CAT(0) cube complexes. Unlike conventions in many literature, we do not assume that a CAT(0) cube complex is finite dimensional.

\begin{defn}\label{def:Sageev-Roller duality}
    Let $X$ be a CAT(0) cube complex and let $\K\subset \H(X)$. Define an equivalence relation $E_\K^X$ on $X^{(0)}$ as follows; for $v,w \in X^{(0)}$, $v \,E_\K^X\, w$ if and only if $\alpha_v(h) = \alpha_w(h)$ for any $h \in \K$. Denote the quotient $X^{(0)}/E_\K^X$ by $X_\K$ and the equivalence class containing $v \in X^{(0)}$ by $[v]$. Define a graph $G_\K \subset (X_\K)^2$ by
    \begin{align*}
        G_\K = \big\{([v],[w]) \in (X_\K)^2 \mid \#\{h \in \K\mid \alpha_v(h) \neq \alpha_w(h)\}=1 \big\},
    \end{align*}
    then $(X_\K,G_\K)$ becomes a median graph.
\end{defn}

\begin{rem}\label{rem:quotient of CAT(0) cc}
    Note $d_{X_\K}([v],[w]) = \#\{h \in \K\mid \alpha_v(h) \neq \alpha_w(h)\}$ for any $v,w\in X^{(0)}$. Also, the quotient map $X^{(0)} \to X_\K$ induces a bijection $\K \to \H(X_\K)$ by sending an edge $(v,w)\in (X^{(0)})^2$ belonging to a hyperplane in $\K$ to the edge $([v],[w]) \in (X_\K)^2$, which preserves transversality and inclusion of halfspaces. Hence, we will often identify $\K$ and $\H(X_\K)$. By this identification, when we take quotients of $X$ repeatedly, we consider it as taking smaller and smaller subsets of $\H(X)$.
\end{rem}

\subsection{Wright's construction}\label{subsec: Wright's construction}

In this section, we review the construction of a cobornologous Lipschitz map from a CAT(0) cube complex to its quotient in \cite{Wri12}. Throughout Section \ref{subsec: Wright's construction}, let $X$ be a CAT(0) cube complex of dimension $D \in \NNo$, let $\H(X)$ be the set of all hyperplanes of $X$, and fix a vertex $x_0 \in X^{(0)}$.

\begin{defn}\label{def:basic notations}
    Given $h \in \H(X)$, we define a choice of orientation for $h$ by $x_0 \in h^-$. Given $h,k \in \H(X)$,
    \begin{itemize}
        \item 
        $h$ and $k$ are said to \emph{cross} if $h^{(0)} \cap k^{(0)} \neq \emptyset$,
        \item
        we define $k\le h$ if $k^- \subset h^-$,
        \item 
        $h$ and $k$ are said to be \emph{opposite} if $h^+ \cap k^+ = \emptyset$.
    \end{itemize}
    For $x,y\in X^{(0)}$, define $\H(x,y)=\{h \in \H(X) \mid x \in h^- \Leftrightarrow y \in h^+\}$.
\end{defn}

\begin{rem}
    For any distinct $h,k \in \H(X)$, exactly one of the following four situations occurs; $h<k$, $k<h$, $h$ and $k$ are opposite, or $h$ and $k$ cross.
\end{rem}

The construction of the cobornologous Lipschitz map goes as follows. We first construct a nice subset $\K_c \subset \H(X)$ in Section \ref{subsec:Controlled colorings}. Next, in Section \ref{subsec:Interpolation of a quotient map}, we take the quotient of $X$ by $\K_c$ (see Definition \ref{def:Sageev-Roller duality}) and deform the quotient map $X\to X_{\K_c}$ in a larger CAT(0) cube complex $C(X_{\K_c})$ containing $X_{\K_c}$, which creates a contraction $\Psi_w\colon X \to C(X_{\K_c})$. Finally, we construct a projection $C(X_{\K_c}) \to X_{\K_c}$ and compose it with $\Psi_w$ in Section \ref{subsec:The Wright's projection theorem}.

\subsubsection{Controlled colorings}\label{subsec:Controlled colorings}

In Section \ref{subsec:Controlled colorings}, we will review the construction of controlled colorings, which is defined in Definition \ref{def:coloring}. Definition \ref{def:H^d_n} and Definition \ref{def:rank vector} below correspond to \cite[Definition 2.5, Definition 2.15]{Wri12}. They're presented slightly differently from \cite{Wri12} but define the same partition of $\H(X)$, because taking quotient of $X$ by the Sageev-Roller duality preserves transversality and inclusion of halfspaces.

\begin{defn}\label{def:H^d_n}
    Let $d \in \NN\setminus\{1\}$ and $\K \subset \H(X)$. For each $n \in \NNo$, we define $\K^d_{\ge n} \subset \H(X)$ by induction on $n$ as follows;
    \begin{align*}
        \K^d_{\ge 0} &= \K, \\
        \K^d_{\ge n+1} &= \{h \in \K^d_{\ge n} \mid \exists\,h_1,\cdots,h_d \in \K^d_{\ge n},\,\text{$\forall\,i\neq j$, $h_i$ and $h_j$ cross and } \forall\,i,\,h_i< h \}.
    \end{align*}
    For $n \in \NNo$, we define $\K^d_n \subset \H(X)$ by $\K^d_n = \K^d_{\ge n} \setminus \K^d_{\ge n+1}$, which defines the partition $\K = \bigsqcup_{n = 0}^\infty \K^d_n$.
\end{defn}

We get finer and finer partitions of $\H(X)$ by applying Definition \ref{def:H^d_n} repeatedly as follows.

\begin{defn}\label{def:H^(d_1,cdots,d_k)}
    For $k \in\NN$, $(d_1,\cdots,d_k) \in (\NN\setminus\{1\})^k$, and $(n_1,\cdots,n_k) \in (\NNo)^k$, define $\H^{(d_1,\cdots,d_k)}_{(n_1,\cdots,n_k)} \subset \H(X)$ by induction on $k$ as follows;
    \begin{align*}
        \H^{(d_1)}_{(n_1)} &= \H(X)^{d_1}_{n_1},\\
        \H^{(d_1,\cdots,d_{k+1})}_{(n_1,\cdots,n_{k+1})} &= \Big(\H^{(d_1,\cdots,d_k)}_{(n_1,\cdots,n_k)}\Big)^{d_{k+1}}_{n_{k+1}}.
    \end{align*}
\end{defn}

\begin{rem}
    Fix $k \in \NN$ and $(d_1,\cdots,d_k) \in (\NN\setminus\{1\})^k$, then we have
    \begin{align*}
        \H(X) = \bigsqcup_{(n_1,\cdots,n_k) \,\in\, (\NNo)^k}\H^{(d_1,\cdots,d_k)}_{(n_1,\cdots,n_k)}.
    \end{align*}
    Hence, for each $h \in \H(X)$, there exists a unique sequence $(n_1,\cdots,n_k) \in (\NNo)^k$ such that $h \in \H^{(d_1,\cdots,d_k)}_{(n_1,\cdots,n_k)}$.
\end{rem}

\begin{defn}\label{def:rank vector}
    When $D \ge 2$, a map $\rank \colon \H(X) \to (\NNo)^{D-1}$ is defined by
    \begin{align*}
        \rank(h) = (n_1,\cdots,n_{D-1}),\text{ where $h \in \H^{(D,D-1,\cdots,2)}_{(n_1,n_2,\cdots,n_{D-1})}$}.
    \end{align*}
    When $D \le 1$, we define a map $\rank \colon \H(X) \to \NNo$ by $\rank(h) = 0$ for any $h \in \H(X)$. The sequence $\rank(h)$ is called the \emph{rank vector of} $h$.
\end{defn}

\begin{rem}
The reason we define the rank vector differently in the case $D \le 1$ is because \cite[Proposition 2.14]{Wri12} is not true for $d=1$. This is because when $X$ is a tree, for any $k,h \in \H(X)$ satisfying $k<h$, the $1$-corner $k^+$ contains $h$ but doesn't contain $k$ (see \cite{Wri12} for relevant terminologies).
\end{rem}

\begin{defn}
    Let $k\in\NN$. The \emph{lexicographic order} $\le_{\mathrm{lex}}$ on $(\NNo)^k$ is defined as follows. For $(n_1,\cdots,n_k), (m_1,\cdots,m_k) \in (\NNo)^k$,
    \begin{align*}
        (n_1,\cdots,n_k) <_{\mathrm{lex}} (m_1,\cdots,m_k) \iff \exists\, i \in \NN,\, \Big(\bigwedge_{j<i}n_j = m_j\Big) \,\wedge\, n_i < m_i.
    \end{align*}
\end{defn}

Definition \ref{def:rank maximal} is from \cite[Definition 1.1, Definition 2.18]{Wri12}.

\begin{defn}\label{def:rank maximal}
    Given $h \in \H(X)$, a hyperplane $k \in \H(X)$ is called a \emph{predecessor} of $h$ if $k<h$ and $\{k_1 \in \H(X) \mid k<k_1<h\} = \emptyset$. A predecessor $k \in \H(X)$ of $h$ is called $\rank$-\emph{maximal} if for any predecessor $k_1 \in \H(X)$ of $h$, we have $\rank(k_1) \le_{\mathrm{lex}} \rank(k)$.
\end{defn}

\begin{rem}\label{rem:predecessor exists}
    A predecessor of $h \in \H(X)$ exists if and only if $x_0 \notin h^{(0)}$. In particular, a $\rank$-maximal predecessor of $h \in \H(X)$ exists whenever $x_0 \notin h^{(0)}$ since there exist at most $D$ predecessors of $h$ by \cite[Lemma 1.2]{Wri12}.
\end{rem}

\begin{defn}\cite[Definition 2.19]{Wri12}\label{def:coloring}
A map $c \colon \H(X) \to \{0,1\}$ is defined by
\begin{align*}
    c(h)
    =
    \begin{cases}
        1& {\rm if~} \text{for any $\rank$-maximal precedessor $k \in \H(X)$ of $h$, $c(k) = 0$}, \\
        0& {\rm otherwise}.
    \end{cases}
\end{align*}
We define $\K_c \subset \H(X)$ by $\K_c = c^{-1}(0)$.
\end{defn}

\begin{rem}\label{rem:1st color is 1}
    When $h \in \H(X)$ satisfies $x_0 \in h^{(0)}$, we have $c(h) = 1$ since the first condition in Definition \ref{def:coloring} is vacuously true by Remark \ref{rem:predecessor exists}. When $D = 1$, we have $c(h) = 1$ if $d(x_0, h) \in 2\NNo$ and $c(h) = 0$ if $d(x_0, h) \in 2\NN-1$.
\end{rem}

Theorem \ref{thm:coloring} below is from \cite[Theorem 2.23]{Wri12} combined with the fact that the flatness of a CAT(0) cube complex of dimension $D$ is at most $D$. We don't define controlled colorings in this paper, because we only use it as a black box to apply \cite[Lemma 4.7]{Wri12}, which is actually hided in the proof of \cite[Theorem 4.9]{Wri12}. Readers are referred to \cite[Definition 2.3, Definition 2.9]{Wri12} for relevant definitions.

\begin{thm}\label{thm:coloring}
    The map $c \colon \H(X) \to \{0,1\}$ defined in Definition \ref{def:coloring} is a $3^{D-1}D$-controlled coloring of $\H(X)$. 
\end{thm}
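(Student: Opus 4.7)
The plan is to proceed by induction on the dimension $D$, in parallel with the length $D-1$ of the rank vector in Definition \ref{def:rank vector} and the recursive structure of Definition \ref{def:coloring}. Since the result is invoked as a black box from \cite{Wri12}, I sketch only the natural approach one would take if attempting it freshly.

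For the base case $D \le 1$, Remark \ref{rem:1st color is 1} identifies $c(h)$ with the parity of $d(x_0, h)$, so colors strictly alternate along any nested chain of hyperplanes, which gives the $1$-controlled property and matches the bound $3^{0} \cdot 1 = 1$.

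For $D \ge 2$, I would reduce the claim to an upper bound on the length of $c$-monochromatic chains $h_0 < h_1 < \cdots < h_m$ in $\H(X)$. The defining recursion for $c(h)$ is driven by the colors on the $\rank$-maximal predecessors of $h$, and by Remark \ref{rem:predecessor exists} together with \cite[Lemma 1.2]{Wri12} there are at most $D$ such predecessors; this contributes the factor $D$ in the final bound. The factor $3^{D-1}$ is then extracted from the $(D-1)$-fold iteration built into Definition \ref{def:H^(d_1,cdots,d_k)}: at each of the $D-1$ successive refinements one invokes the pairwise-crossing requirement of Definition \ref{def:H^d_n}, which restricts how hyperplanes below $h$ can sit together via the trichotomy (crossing, nested, opposite) from the remark after Definition \ref{def:basic notations}. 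A careful accounting should show that each such refinement stage at most triples the length of any monochromatic run. Concretely, I would prove that along a $c$-monochromatic chain the lexicographically ordered vectors $\rank(h_i)$ must strictly descend in a controlled way, so that after at most $3^{D-1}D$ steps the rank vector has exhausted all possibilities.

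The main obstacle I expect is the combinatorial bookkeeping connecting rank-maximality to monochromaticity: one must show that the choice forced by the rank-maximal predecessor rule in Definition \ref{def:rank maximal}, combined with the pairwise-crossing axiom in Definition \ref{def:H^d_n}, drives a strict coordinate-wise drop in $\rank$ each time the color is preserved. Reproducing the exact constant $3^{D-1}D$ requires tracking precisely which configurations of hyperplanes below $h$ are excluded when the $\rank$-maximal predecessor is forced to have color $0$; this is the technical heart of Wright's original argument \cite[Theorem 2.23]{Wri12}, and the delicate interplay between the successive coordinates of the rank vector is what makes the numerical constant nontrivial to pin down.
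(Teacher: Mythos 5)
The paper gives no proof of this theorem; it imports it as a black box from \cite[Theorem 2.23]{Wri12}, combined with the observation that the flatness of a $D$-dimensional CAT(0) cube complex is at most $D$, and even declines to reproduce the definition of a controlled coloring, deferring instead to \cite[Definitions 2.3, 2.9]{Wri12}. There is therefore no internal argument to compare your sketch against; the reader is expected to consult Wright directly.

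Your write-up is, by your own framing, a plan rather than a proof, and the unfilled gaps sit exactly at the load-bearing points. You propose reducing $3^{D-1}D$-controlledness to a bound on the length of $c$-monochromatic nested chains, but Wright's notion of a $d$-controlled coloring is a metric condition phrased in terms of $d$-corners of hyperplanes and the hyperplane distance; the equivalence with a chain-length bound is neither established nor self-evident, so the inductive machinery you set up (lexicographic descent of $\rank$ along monochromatic chains) is aimed at a reformulation you have not justified. The claim that ``each refinement stage at most triples the length of any monochromatic run'' is precisely the technical content of \cite[Proposition 2.14, Theorem 2.23]{Wri12} rather than a corollary of the definitions, and your attribution of the factor $D$ to the bound of at most $D$ $\rank$-maximal predecessors from \cite[Lemma 1.2]{Wri12} does not match the paper's own explanation, which says $D$ enters as an upper bound on the flatness of the complex. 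Replacing those three assertions with arguments is essentially Wright's entire proof, so as written the proposal points toward a proof without constituting one.
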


\subsubsection{Interpolation of a quotient map}\label{subsec:Interpolation of a quotient map}

\begin{defn}\label{def:C(X)}
    For $\xi \in [0,1]^{\H(X)}$, define $\supp(\xi) = \{h \in \H(X) \mid \xi(h) \neq 0\}$. A CAT(0) cube complex $C(X) \subset [0,1]^{\H(X)}$ is defined by
    \begin{align*}
        C(X) = \{\xi \in [0,1]^{\H(X)} \mid \#\supp(\xi)<\infty\},
    \end{align*}
    where the $0$-skeleton of $C(X)$ is $C(X) \cap \{0,1\}^{\H(X)}$. Define $\iota \colon X^{(0)} \to C(X)$ by
    \begin{align*}
        \iota(x)&= 1_{\H(x_0,x)}, \text{ where $1_{\H(x_0,x)}(h)=1$ if $h \in \H(x_0,x)$ and $1_{\H(x_0,x)}(h)=0$ if $h \notin \H(x_0,x)$}
    \end{align*}
    (see Definition \ref{def:basic notations}). The map $\iota$ affinely extends to $\iota \colon X \to C(X)$.
\end{defn}

\begin{rem}\label{rem:iota is isom}
    By $C(X) \subset \ell^1 (\H(X))$, we put $\ell^1$-metric on $C(X)$. By \cite[Theorem 3.6]{Wri12}, $\iota \colon X \to C(X)$ is isometric, where we put $\ell^1$-metric on $X$ as well.
\end{rem}

See Definition \ref{def:coloring} for the map $c \colon\H(X)\to \{0,1\}$ and the set $\K_c$ below.

\begin{defn}
    Let $\ell \in \NN$. Define $w \colon \H(X)^2 \to [0,1]$ by
    \begin{align*}
        w(h,k) =
        \begin{cases}
        \frac{\ell}{\ell+1}    & {\rm if~} h = k\\
        \frac{1}{\ell+1}    & {\rm if~} h<k \,\wedge\, c(k)=1 \,\wedge\, \forall\,j \in \H(X),\, h<j<k \Rightarrow c(j) = 0\\
        0 & {\rm otherwise}.
        \end{cases}
    \end{align*}
Define $\Psi_w \colon C(X) \to C(X_{\K_c})$ as follows; for $\xi \in C(X)$ and $k \in \K_c$,
\begin{align}\label{eq:(Psixi)(k)}
    (\Psi_w\xi)(k) = \min\big\{1, \sum_{h \in \H(X)}w(k,h)\xi(h)\big\}.
\end{align}
\end{defn}

\begin{rem}\label{rem:Psi on C(H) well-defined}
    In \cite{Wri12}, $\Psi_w\xi$ in \eqref{eq:(Psixi)(k)} is defined only for $\xi \in \iota(X)$, but it's straightforward to see that \eqref{eq:(Psixi)(k)} is well-defined for any $\xi \in C(X)$. Indeed, for any $\xi \in C(X)$, we have $\supp(\Psi_w\xi) \subset \{k \in \K_c \mid \exists\,h \in \supp(\xi),\, k \le h\}$. This implies $\#\supp(\Psi_w\xi)< \infty$ by $\#\supp(\xi)< \infty$, hence $\Psi_w\xi \in C(X_{\K_c})$.
\end{rem}

\subsubsection{The Wright's projection theorem}\label{subsec:The Wright's projection theorem}

\begin{defn}
     We define $\H^{op}, \H^< \subset \H(X)^2$ by
    \begin{align*}
        \H^{op} &= \{(h,k) \in \H(X)^2 \mid \text{$h$ and $k$ are opposite}\}.\\
        \H^< &= \{(h,k) \in \H(X)^2 \mid h < k\}.
    \end{align*}
\end{defn}

\begin{defn}
    Define maps $p^{op},\, p^< \colon [0,1]^2 \to [0,1]^2$ by
        \begin{align*}
        &p^{op}(x,y)=(p_1^{op}(x,y), p_2^{op}(x,y)) = 
        \begin{cases}
        (x-y,0)    & {\rm if~} x \ge y\\
        (0, y-x)    & {\rm if~} x \le y,
        \end{cases}\\[10pt]
        &p^<(x,y) = (p_1^<(x,y),p_2^<(x,y)\big) =
        \begin{cases}
        (x+y,0)    & {\rm if~} x + y \le 1\\
        (1, x+y-1)    & {\rm if~} x + y \ge 1.
        \end{cases}
    \end{align*}
\end{defn}

\begin{defn}\label{def:p_h,k^<, p_h,k^op}
    For $(h,k) \in \H(X)^2$ with $h \neq k$, we define maps $p_{h,k}^{op},\, p_{h,k}^< \colon C(X) \to C(X)$ as follows; for $\xi\in C(X)$,
    \begin{align*}
        \big((p_{h,k}^{op}\xi)(h), (p_{h,k}^{op}\xi)(k)\big) = p^{op}(\xi(h),\xi(k)) \text{ and $(p_{h,k}^{op}\xi)(j) = \xi(j)$ for any $j \in \H(X)\setminus\{h,k\}$}
    \end{align*}
    and
    \begin{align*}
        \big((p_{h,k}^<\xi)(h),(p_{h,k}^<\xi)(k)\big) = p^<(\xi(h),\xi(k)) \text{ and $(p_{h,k}^<\xi)(j) = \xi(j)$ for any $j \in \H(X)\setminus\{h,k\}$}.
    \end{align*}
\end{defn}

\begin{defn}
    Let $S$ be a set and $\{p_\lambda\}_{\lambda \in \Lambda}$ be a family of maps $p_\lambda \colon S \to S$. For $s \in S$, define $\Lambda \hy\supp(s) \subset \Lambda$ by $\Lambda\hy\supp(s) = \{\lambda \in \Lambda \mid p_\lambda(s) \neq s\}$. If a total order $\prec$ is defined on $\Lambda$, then for a finite subset $F = \{\lambda_1\prec \cdots \prec \lambda_m\} \subset \Lambda$, the map $p_F \colon S \to S$ is defined by
    \begin{align*}
        p_F(s) = p_{\lambda_m}\circ \cdots \circ p_{\lambda_1}(s).
    \end{align*}
    If in addition $\#\Lambda\hy\supp(s)<\infty$ for any $s\in S$, then the map $P_{(\Lambda,\prec)} \colon S \to S$ is defined by
    \begin{align*}
        P_{(\Lambda,\prec)}(s) = p_{\Lambda\hy\supp(s)}(s).
    \end{align*}
\end{defn}

\begin{lem}\label{lem:Lemma 3.5 of Wri12}{\rm \cite[Lemma 3.5]{Wri12}}
    Let $S$ be a set, $(\Lambda,\prec)$ be a totally ordered set, and $\{p_\lambda\}_{\lambda \in \Lambda}$ be a family of maps $p_\lambda \colon S \to S$. Suppose that (1) and (2) below hold.
    \item[(1)]
    For any $s \in S$, we have $\#\Lambda\hy\supp(s)<\infty$.
    \item[(2)]
    For any $s \in S$ and the $\prec$-least element $\lambda$ in $\Lambda\hy\supp(s)$, we have
    \begin{align*}
        \Lambda\hy\supp(p_\lambda(s)) \subset \Lambda\hy\supp(s)\setminus \{\lambda\}.
    \end{align*}
    Then, for any $s \in S$ and any finite subset $F \subset \Lambda$ satisfying $\Lambda\hy\supp(s) \subset F$, we have 
    \begin{align*}
        p_F(s) = p_{\Lambda\hy\supp(s)}(s) ~~~and~~~ \Lambda\hy\supp (p_F(s)) = \emptyset.
    \end{align*}
\end{lem}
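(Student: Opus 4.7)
The plan is to prove both conclusions simultaneously by induction on $|F|$, relying on one immediate observation from the definition of $\Lambda\hy\supp$: if $\lambda \notin \Lambda\hy\supp(s)$ then $p_\lambda(s) = s$. The base case $|F| = 0$ forces $\Lambda\hy\supp(s) = \emptyset$ from the hypothesis $\Lambda\hy\supp(s) \subset F$, so $p_F(s) = s = p_{\Lambda\hy\supp(s)}(s)$ and the support claim is automatic.

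For the inductive step, let $\lambda_1$ denote the $\prec$-least element of $F$, and split cases according to whether $\lambda_1 \in \Lambda\hy\supp(s)$. If $\lambda_1 \notin \Lambda\hy\supp(s)$, then $p_{\lambda_1}(s) = s$, so $p_F(s) = p_{F \setminus \{\lambda_1\}}(s)$ while $\Lambda\hy\supp(s) \subset F \setminus \{\lambda_1\}$; the inductive hypothesis applied to the pair $(s, F \setminus \{\lambda_1\})$ then delivers both conclusions. If $\lambda_1 \in \Lambda\hy\supp(s)$, then $\lambda_1$ is also the $\prec$-least element of $\Lambda\hy\supp(s)$, and hypothesis (2) gives $\Lambda\hy\supp(p_{\lambda_1}(s)) \subset \Lambda\hy\supp(s) \setminus \{\lambda_1\}$. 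I would then apply the inductive hypothesis to $p_{\lambda_1}(s)$ twice: once against the enlargement $F \setminus \{\lambda_1\}$ and once against $\Lambda\hy\supp(s) \setminus \{\lambda_1\}$, both of which contain $\Lambda\hy\supp(p_{\lambda_1}(s))$ and have cardinality strictly less than $|F|$. Both applications produce the same value $p_{\Lambda\hy\supp(p_{\lambda_1}(s))}(p_{\lambda_1}(s))$, so chaining yields
\begin{align*}
    p_F(s) = p_{F \setminus \{\lambda_1\}}(p_{\lambda_1}(s)) = p_{\Lambda\hy\supp(s) \setminus \{\lambda_1\}}(p_{\lambda_1}(s)) = p_{\Lambda\hy\supp(s)}(s),
\end{align*}
where the last equality simply unfolds the definition of $p_{\Lambda\hy\supp(s)}$, using that $\lambda_1$ is the $\prec$-least element of $\Lambda\hy\supp(s)$. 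The emptiness $\Lambda\hy\supp(p_F(s)) = \emptyset$ falls out of the inductive hypothesis in each case.

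There is no genuine obstacle here, only bookkeeping: the subtle point is that hypothesis (2) is a one-sided containment, so $\Lambda\hy\supp(p_{\lambda_1}(s))$ can be strictly smaller than $\Lambda\hy\supp(s) \setminus \{\lambda_1\}$. This is precisely why applying the inductive hypothesis twice—against two different finite enlargements of $\Lambda\hy\supp(p_{\lambda_1}(s))$—is the cleanest route to identifying $p_{F \setminus \{\lambda_1\}}(p_{\lambda_1}(s))$ with $p_{\Lambda\hy\supp(s) \setminus \{\lambda_1\}}(p_{\lambda_1}(s))$ without having to track exactly which elements of $\Lambda\hy\supp(s) \setminus \{\lambda_1\}$ survive after applying $p_{\lambda_1}$.
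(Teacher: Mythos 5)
Your proof is correct. The paper does not supply its own argument for this lemma---it simply cites \cite[Lemma 3.5]{Wri12}---so there is nothing in the source to compare against, but your induction on $|F|$ is the natural and complete argument. The only genuine subtlety, which you identify and handle correctly, is that hypothesis (2) gives only a one-sided containment $\Lambda\hy\supp(p_{\lambda_1}(s)) \subset \Lambda\hy\supp(s)\setminus\{\lambda_1\}$, so one cannot directly equate $p_{F\setminus\{\lambda_1\}}(p_{\lambda_1}(s))$ with $p_{\Lambda\hy\supp(s)\setminus\{\lambda_1\}}(p_{\lambda_1}(s))$ by dropping the same elements from both; instead you route both through the common value $p_{\Lambda\hy\supp(p_{\lambda_1}(s))}(p_{\lambda_1}(s))$ by two applications of the inductive hypothesis, each to a set of cardinality strictly less than $|F|$. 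The base case and the case $\lambda_1\notin\Lambda\hy\supp(s)$ (so $p_{\lambda_1}(s)=s$) are also handled correctly, and the support conclusion $\Lambda\hy\supp(p_F(s))=\emptyset$ is inherited from the inductive hypothesis in every branch.
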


\begin{rem}\label{rem:H^op-supp(xi), H^<-supp(xi) finite}
    Note that $\big\{p_{h,k}^{op}\big\}_{(h,k) \in \H^{op}}$ and $\big\{p_{h,k}^<\big\}_{(h,k) \in \H^<}$ are families of maps from $C(X)$ to $C(X)$. For any $\xi \in C(X)$, we have
    \begin{align*}
        \H^{op}\hy\supp(\xi) &=\{(h,k) \in \H^{op} \mid \xi(h) \neq 0 \,\wedge\, \xi(k) \neq 0 \} \subset \{(h,k) \in \H^{op} \mid h,k \in \supp(\xi)\},\\
        \H^<\hy\supp(\xi) &= \{(h,k) \in \H^< \mid \xi(h) \neq 1 \,\wedge\, \xi(k) \neq 0 \} \subset \{(h,k) \in \H^< \mid k \in \supp(\xi) \,\wedge\, h<k\}.
    \end{align*}
    In particular, both $\H^{op}\hy\supp(\xi)$ and $\H^<\hy\supp(\xi)$ are finite for any $\xi \in C(X)$.
\end{rem}

Theorem \ref{thm:supp of p^op, p^<} was proved in the proof of \cite[Theorem 3.6]{Wri12}.

\begin{thm}\label{thm:supp of p^op, p^<}
    The following hold.
    \begin{itemize}
        \item[(a)]
        For any $\xi \in C(X)$ and $(h,k) \in \H^{op}$, we have $\H^{op}\hy\supp(p_{h,k}^{op} \xi) \subset \H^{op}\hy\supp(\xi) \setminus \{(h,k)\}$.
        \item[(b)]
        For any $\xi \in C(X)$ and any $(h,k) \in \H^<\hy\supp(\xi)$ satisfying $$d_X(h^{(0)},k^{(0)}) = \max_{(h_1,k_1) \in \H^<\hy\supp(\xi)} d_X(h_1^{(0)},k_1^{(0)}),$$ we have $\H^<\hy\supp(p_{h,k}^< \xi) \subset \H^<\hy\supp(\xi) \setminus \{(h,k)\}$.
        \item[(c)]
        Suppose that total orders $\prec_1$ on $\H^{op}$ and $\prec_2$ on $\H^<$ satisfy the condition below;
        \begin{align*}
            \forall\, (h_1,k_1),(h_2,k_2) \in \H^<,\, d_X(h_1^{(0)},k_1^{(0)}) < d_X(h_2^{(0)},k_2^{(0)}) \Rightarrow (h_2,k_2) \prec_2 (h_1,k_1).
        \end{align*}
        Then, the map $P \colon C(X) \to C(X)$ defined by $P = P_{(\H^<,\prec_2)}\circ P_{(\H^{op},\prec_1)}$ satisfies (1) and (2) below.
        \begin{itemize}
            \item[(1)]
            $P(C(X)) \subset \iota(X)$ and $\forall\,\xi \in \iota(X),\, P\xi = \xi$.
            \item[(2)] 
            $P$ is contractive i.e. $\forall\, \xi_1,\xi_2 \in C(X),\, \|P\xi_1-P\xi_2\|_1 \le \|\xi_1,-\xi_2\|_1$.
        \end{itemize}
        \end{itemize}
\end{thm}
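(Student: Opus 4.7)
The plan is to prove (a) and (b) as one-step observations about individual flip operations and then combine them through Lemma \ref{lem:Lemma 3.5 of Wri12} to obtain (c). Part (a) is immediate from the formula $p^{op}(x,y) = (x-\min(x,y), y-\min(x,y))$: both coordinates weakly decrease and at least one becomes $0$, so $\supp(p_{h,k}^{op}\xi) \subset \supp(\xi)$ and $(p_{h,k}^{op}\xi)(h)(p_{h,k}^{op}\xi)(k) = 0$. Hence $\H^{op}\hy\supp(p_{h,k}^{op}\xi) \subset \H^{op}\hy\supp(\xi) \setminus \{(h,k)\}$ directly.

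For (b), the key geometric input I would establish is: if $h_1 < h_2 < h_3$ in $\H(X)$, then $d_X(h_1^{(0)}, h_3^{(0)}) \ge d_X(h_1^{(0)}, h_2^{(0)}) + 1$ and symmetrically $\ge d_X(h_2^{(0)}, h_3^{(0)}) + 1$. The proof observes that any edge in $h_1$ belongs only to $h_1$, so its two endpoints, being on opposite sides of $h_1$ but on the same side of $h_2$, both lie in $h_1^- \subset h_2^-$; hence every vertex of $h_1^{(0)}$ lies in $h_2^-$, and symmetrically every vertex of $h_3^{(0)}$ lies in $h_2^+$. A distance-realizing geodesic from $h_1^{(0)}$ to $h_3^{(0)}$ therefore crosses $h_2$ at some edge $e$, whose endpoints lie in $h_2^{(0)}$, and splitting the geodesic at $e$ yields the bound. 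With this lemma in hand, I would case-analyze pairs $(h_0, k_0) \in \H^<\hy\supp(p_{h,k}^<\xi)$ with $\{h_0, k_0\} \cap \{h, k\} \neq \emptyset$: apart from two "boundary" subcases, namely $h_0 = k$ with $\xi(k) = 1$ and $k_0 = h$ with $\xi(h) = 0$, direct computation shows the pair already lies in $\H^<\hy\supp(\xi)$. In each boundary subcase, the lemma combined with the maximality of $d_X(h^{(0)}, k^{(0)})$ produces a contradiction via the auxiliary pair $(h, k_0)$ or $(h_0, k)$.

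For (c), I would first establish the characterization $\iota(X) = \{\xi \in C(X) : \H^{op}\hy\supp(\xi) = \H^<\hy\supp(\xi) = \emptyset\}$. On vertices, $\iota(v) = 1_{\H(x_0, v)}$ satisfies both vanishings (using $x_0 \in h^- \cap k^-$), and affine extension preserves them across each cube whose hyperplanes pairwise cross. Conversely, both vanishings force the coordinates in $(0, 1)$ to sit on pairwise crossing hyperplanes and the $1$-coordinates to determine a consistent base vertex, realizing $\xi$ as a point of a cube of $X$. Parts (a) and (b), together with Remark \ref{rem:H^op-supp(xi), H^<-supp(xi) finite}, verify the hypotheses of Lemma \ref{lem:Lemma 3.5 of Wri12} for both $P_{(\H^{op}, \prec_1)}$ and $P_{(\H^<, \prec_2)}$, the latter because the ordering condition on $\prec_2$ makes each $\prec_2$-least pair distance-maximal. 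I would also verify that $P_{(\H^<, \prec_2)}$ preserves $\H^{op}\hy\supp = \emptyset$: a prospective new opposite pair involving $h$ or $k$ forces, via the inclusion $k^+ \subset h^+$ coming from $h < k$, an opposite pair to already exist in $\xi$. Consequently $P\xi \in \iota(X)$, and both factors restrict to the identity on $\iota(X)$ since their supports vanish there, giving (c)(1).

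For (c)(2), a routine case analysis shows that $p^{op}, p^< \colon \RR^2 \to \RR^2$ are $\ell^1$-nonexpansive, whence each $p_{h,k}^{op}, p_{h,k}^<$ is $\ell^1$-nonexpansive on $C(X)$ (modifying only two coordinates). To compare $P_{(\Lambda, \prec)}\xi_1$ and $P_{(\Lambda, \prec)}\xi_2$ whose $\Lambda\hy\supp$'s may differ, I would invoke Lemma \ref{lem:Lemma 3.5 of Wri12} with $F = \Lambda\hy\supp(\xi_1) \cup \Lambda\hy\supp(\xi_2)$ to represent both as $p_F(\xi_i)$, a common finite composition of $\ell^1$-nonexpansive maps, from which contractivity of $P$ follows. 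The main obstacle is the case analysis in (b), particularly the boundary subcases where $\xi$ takes an extreme value at $h$ or $k$: these slip past purely local bookkeeping and require both the distance-monotonicity lemma and the maximality hypothesis to rule out.
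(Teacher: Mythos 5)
The paper does not give its own proof of this statement; it cites \cite[Theorem 3.6]{Wri12} and uses the result as a black box, so there is no paper proof to compare against. Your proposal is a correct reconstruction of Wright's argument, and the main ideas are all in the right place.

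Going through the pieces: part (a) is indeed immediate once one notices that $p^{op}$ sends $(x,y)$ to $(x-\min(x,y),\,y-\min(x,y))$, so both coordinates weakly decrease and one vanishes. For part (b), your geometric lemma — that $h_1<h_2<h_3$ forces $d_X(h_1^{(0)},h_3^{(0)}) \ge \max\big(d_X(h_1^{(0)},h_2^{(0)}),\,d_X(h_2^{(0)},h_3^{(0)})\big)+1$ — is correct: since each edge belongs to exactly one hyperplane, $h_1^{(0)}\subset h_2^-$ and $h_3^{(0)}\subset h_2^+$, so any geodesic between them crosses $h_2$, and counting that crossing edge gives the strict gain. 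Your case analysis isolates exactly the two genuinely dangerous subcases ($h_0=k$ with $\xi(k)=1$ forcing the pair $(h,k_0)$ into $\H^<\hy\supp(\xi)$ at strictly larger distance, and $k_0=h$ with $\xi(h)=0$ forcing $(h_0,k)$ in), and the maximality hypothesis kills both. The remaining cases where $\{h_0,k_0\}$ meets $\{h,k\}$ are handled by the monotonicity of $p_1^<$ (nondecreasing) and $p_2^<$ (nonincreasing), pairs disjoint from $\{h,k\}$ are untouched, and the pair $(h,k)$ itself is always expelled because $p^<$ forces $x=1$ or $y=0$. For (c), your characterization $\iota(X) = \{\xi\in C(X) : \H^{op}\hy\supp(\xi)=\H^<\hy\supp(\xi)=\emptyset\}$ is correct; the only step you compress is verifying, in the converse direction, that the hyperplanes with $\xi$-value in $(0,1)$ are all adjacent to the base vertex $v$ determined by the $\{0,1\}$-coordinates — this follows from the $\H^<$-vanishing via a separating-hyperplane argument and then the standard fact that pairwise crossing hyperplanes adjacent to a vertex span a cube, and it is worth spelling out if you write this up. The observation that $P_{(\H^<,\prec_2)}$ preserves $\H^{op}\hy\supp=\emptyset$ (via $k^+\subset h^+$) is the right way to see $P(C(X))\subset\iota(X)$ rather than just $\H^<\hy\supp(P\xi)=\emptyset$, and the contractivity argument via $p_F$ with $F=\Lambda\hy\supp(\xi_1)\cup\Lambda\hy\supp(\xi_2)$ uses Lemma \ref{lem:Lemma 3.5 of Wri12} exactly as intended. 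I see no gap.
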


\subsection{Descriptive set theory}\label{sec:Descriptive set theory}

In this section, we recall descriptive set theory. Readers are referred to \cite{Kec95,Anu22,Kec25} for further details of descriptive set theory and Borel equivalence relations.

\begin{defn}\label{def:eq ref}
    Let $X$ be a set. An \emph{equivalence relation on} $X$ is a reflexive, symmetric, and transitive subset of $X^2$. Let $E$ be an equivalence relation on $X$. For $x,y\in X$, we denote $x\, E \,y$ when $(x,y)\in E$. For $A\subset X$, we define $[A]_E = \{y\in X\mid \exists\,x \in A,\, x\,E\, y\}$. For $x \in X$, we also denote $[\{x\}]_E$ by $[x]_E$. A subset $A\subset X$ is called a \emph{transversal} (resp. \emph{partial transversal}) for $E$ if $|A\cap[x]_E| = 1$ (resp. $|A\cap[x]_E| \le 1$) for any $x \in X$. A map $s \colon X\to X$ is called a \emph{selector for} $E$ if $s(x) \in [x]_E$ for any $x \in X$ and $x \,E\, y \iff s(x)=s(y)$ for any $x,y \in X$. An equivalence relation $E$ is called \emph{countable} (resp. \emph{finite}) if for any $x \in X$, $[x]_E$ is countable (resp. finite).
\end{defn}

\begin{defn}\label{def:graph}
    Let $X$ be a set. A \emph{graph on} $X$ is an anti-reflexive and symmetric subset of $X^2$. Let $G$ be a graph on $X$. We call a connected component of $G$ a $G$-\emph{component} and denote by $E_G^X$ the equivalence relation on $X$ defined by $G$-components. A graph is called \emph{countable} if $E_G^X$ is countable. We denote by $q_G^X\colon X \to X/E_G^X$ the quotient map. We denote by $d_G \colon X^2 \to [0,\infty]$ the metric on $X$ induced by $G$ (i.e. given $x,y \in X$, if $x\,E_G^X\, y$, then $d_G(x,y)$ is the graph metric of the $G$-component containing $x$ and $y$. If $\neg(x\, E_G^X\,y)$, then $\d_G(x,y) = \infty$).
\end{defn}

\begin{defn}
    A \emph{Polish space} is a topological space that is separable and completely metrizable. A measurable space $(X,\mathcal{B})$ is called a \emph{standard Borel space} if there exists a Polish topology $\mathcal{O}$ on $X$ such that $\mathcal{B}$ is the smallest $\sigma$-algebra containing $\mathcal{O}$. 
\end{defn}

\begin{rem}\label{rem:union of sbs}
    If $(X_n)_{n \in \NN}$ are standard Borel spaces, then the disjoint union $\bigsqcup_{n\in \NN}X_n$ becomes a standard Borel space by defining $A \subset \bigsqcup_{n\in \NN}X_n$ to be Borel if and only if $A \cap X_n$ is Borel in $X_n$ for any $n \in \NNo$. We will often shorten ``Borel measurable" to ``Borel".
\end{rem}

\begin{defn}\label{def:Borel eq rel}
    Let $X$ be a standard Borel space. An equivalence relation $E$ (resp. a graph $G$) on $X$ is called \emph{Borel} if $E\subset X^2$ (resp. $G\subset X^2$) is Borel in $X^2$. A Borel equivalence relation $E$ on $X$ is called \emph{smooth} if there exists a standard Borel space $Y$ and a Borel map $f \colon X \to Y$ such that for any $x,y \in X$, $x\, E \, y \iff f(x) = f(y)$. A \emph{Borel median graph on} $X$ is a Borel graph on $X$ such that every $G$-component is a median graph.
\end{defn}

\begin{rem}\label{rem:Borel subset of std}
    Any Borel subset of a standard Borel space is a standard Borel space. Hence, a Borel graph $G$ itself is a standard Borel space since $G$ is a Borel subset of $X^2$.
\end{rem}

\begin{rem}\label{rem:finite Borel eq}
    Any finite Borel equivalence relation on a standard Borel space is smooth.
\end{rem}

\begin{rem}\label{rem:quotient is std}
    For any CBER $E$ on a standard Borel space $X$, $E$ is smooth if and only if the quotient $X/E$ is a standard Borel space.
\end{rem}

We will often abbreviate ``countable Borel equivalence relation" to CBER. Lemma \ref{lem:smooth CBER} below is a well-known fact (see \cite[Proposition 2.12]{Kec25}, \cite[Section 20]{Anu22})

\begin{lem}\label{lem:smooth CBER}
    Let $E$ be a countable Borel equivalence relation (CBER) on a standard Borel space $X$. Then, the following are equivalent.
    \begin{itemize}
        \item[(1)]
        $E$ is smooth.
        \item[(2)]
        There exists a Borel transversal $A\subset X$ for $E$.
        \item[(3)]
        There exists a Borel selector $s\colon X \to X$ for $E$.
    \end{itemize}
\end{lem}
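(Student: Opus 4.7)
The plan is to establish the cycle $(3) \Rightarrow (2) \Rightarrow (3)$ together with $(3) \Rightarrow (1)$ and $(1) \Rightarrow (2)$. The first three implications are essentially formal. Given a Borel selector $s$, the set $A = \{x \in X \mid s(x) = x\}$ is Borel as the preimage of the diagonal under the Borel map $x \mapsto (s(x), x)$, and since $s$ is constant on each $E$-class with $s(x) \in [x]_E$, the set $A$ contains exactly one representative per class; taking $f = s$ with $Y = X$ also witnesses smoothness. Conversely, given a Borel transversal $A$, define $s(x)$ to be the unique element of $[x]_E \cap A$. For any Borel $B \subset X$ one computes $s^{-1}(B) = [A \cap B]_E$, so Borel measurability of $s$ reduces to the fact that saturations of Borel sets under a CBER are Borel. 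This is a standard consequence of the Feldman--Moore decomposition: writing $E = \bigcup_n \mathrm{graph}(g_n)$ for Borel automorphisms $g_n \colon X \to X$ gives $[C]_E = \bigcup_n g_n(C)$, a countable union of Borel sets.

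The substantive direction is $(1) \Rightarrow (2)$. Given a Borel reduction $f \colon X \to Y$ of $E$ to equality, consider the Borel set $R = \{(y, x) \in Y \times X \mid f(x) = y\}$. Its vertical sections $R_y = f^{-1}(y)$ are either empty or a single $E$-class, hence countable. The Luzin--Novikov uniformization theorem then produces countably many Borel partial functions $u_n \colon D_n \to X$ with Borel domains $D_n \subset Y$ whose graphs partition $R$; in particular, $f(X) = \bigcup_n D_n$ is Borel in $Y$. Amalgamate the $u_n$ into a single Borel section $u \colon f(X) \to X$ of $f$ by setting $u(y) = u_{n(y)}(y)$ with $n(y) = \min\{n \in \NN \mid y \in D_n\}$, which is Borel since the level sets $\{y \mid n(y) = k\} = D_k \setminus \bigcup_{j<k} D_j$ are Borel. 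Then
\[
A := \{x \in X \mid u(f(x)) = x\}
\]
is Borel, again as a preimage of the diagonal under a Borel map, and by construction meets every $E$-class exactly once, yielding the required Borel transversal.

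The main obstacle is the invocation of Luzin--Novikov, which is needed both to upgrade the a priori only analytic set $f(X)$ to a Borel set and to supply a canonical Borel selection of representatives; everything else is routine bookkeeping with the definitions. Since the statement is a textbook fact, in the actual writeup it would suffice to cite \cite{Kec95,Anu22,Kec25} rather than reproduce the argument in detail.
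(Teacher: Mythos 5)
The paper does not prove this lemma; it explicitly labels it a ``well-known fact'' and defers to \cite[Proposition 2.12]{Kec25} and \cite[Section 20]{Anu22}, so there is no in-paper argument to compare against. Your proof is correct and is the standard one that underlies those citations: the implications $(3)\Rightarrow(2)$, $(3)\Rightarrow(1)$ are immediate from the definitions, $(2)\Rightarrow(3)$ reduces to Borelness of $E$-saturations via Feldman--Moore, and you correctly identify Luzin--Novikov uniformization (countable-section Borel sets decompose into Borel graphs) as the genuinely substantive tool needed for $(1)\Rightarrow(2)$, both to Borelize the image $f(X)$ and to select representatives. The only small redundancy is that you give both $(3)\Rightarrow(2)\Rightarrow(3)$ and $(3)\Rightarrow(1)$, $(1)\Rightarrow(2)$, so the cycle $(1)\Rightarrow(2)\Rightarrow(3)\Rightarrow(1)$ would already suffice; but every step you write down is sound, and, as you say, in the final writeup a citation is all that is required --- which is exactly what the paper does.
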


\begin{rem}\label{rem:Borel selector}
    In Lemma \ref{lem:smooth CBER} (2), the map $X \to A$ sending $x \in X$ to a unique point $y\in A \cap [x]_E$ is a Borel selector for $E$. In Lemma \ref{lem:smooth CBER} (3), the set $s(X)$ is a Borel transversal for $E$.
\end{rem}

Theorem \ref{thm:countable to one} below will be used throughout this paper for the case when the section $A_x$ is a countable set, which is $K_\sigma$. In Theorem \ref{thm:countable to one} (see \cite[Theorem 18.18, Theorem 35.43]{Kec95}, \cite[Corollary 13.7]{Anu22}), for $A \subset X\times Y$ and $x \in X$, we define $A_x \subset Y$ by
\begin{align}
    A_x=\{y \in Y \mid (x,y) \in A\}.
\end{align}

\begin{thm}[Arsenin-Kunugui]\label{thm:countable to one}
    Let $Y$ be a Polish space, $X$ a standard Borel space, and $A \subset X \times Y$ Borel such that $A_x$ is $K_\sigma$ (i.e. countable union of compact sets) in $Y$ for any $x \in X$. Then, the set $\mathrm{proj}_X(A)\,(=\{x \in X\mid \exists\,y \in Y,\,(x,y) \in A\})$ is Borel.
\end{thm}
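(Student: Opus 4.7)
The result is the classical Arsenin-Kunugui projection theorem. The plan is to reduce to Borel sets with \emph{compact} sections and then analyze the resulting section map into the Vietoris hyperspace.

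First I would embed $Y$ as a $G_\delta$ subset of a compact Polish space $\hat Y = [0,1]^\NN$, so that $A$ becomes Borel in $X \times \hat Y$ and each section $A_x$ remains $K_\sigma$ in $\hat Y$. The next step is to reduce to compact sections: by a uniform version of the fact that a set is $K_\sigma$ iff it is a countable union of compacts (Saint-Raymond-style parametrization), there exist Borel sets $B_n \subset X \times \hat Y$ with compact sections such that $A = \bigcup_n B_n$. Then $\mathrm{proj}_X(A) = \bigcup_n \mathrm{proj}_X(B_n)$, so it suffices to prove the result when $A$ itself has compact sections.

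For Borel $B \subset X \times \hat Y$ with compact sections, I would define the section map $\sigma \colon X \to K(\hat Y)$ by $\sigma(x) = B_x$, where $K(\hat Y)$ denotes the hyperspace of compact subsets of $\hat Y$ endowed with the Vietoris topology, which is a compact Polish space. The main claim is that $\sigma$ is Borel. A subbase of the Vietoris topology consists of the sets $\{K : K \cap U \neq \emptyset\}$ and $\{K : K \subset U\}$ for open $U \subset \hat Y$, so Borelness of $\sigma$ reduces to showing that $\{x : B_x \cap U \neq \emptyset\}$ and $\{x : B_x \subset U\}$ are Borel for each open $U$. I would prove this by transfinite induction on the Borel rank of $B$: the base cases ($B$ open or closed in $X \times \hat Y$) follow from a countable-base argument together with the compactness of sections (compactness converts ``$B_x \subset U$'' into a countable condition via finite subcovers), while the inductive step for countable Boolean operations is straightforward. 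Once $\sigma$ is Borel, the theorem follows immediately, since $\mathrm{proj}_X(B) = \sigma^{-1}(K(\hat Y) \setminus \{\emptyset\})$ and $\{\emptyset\}$ is closed in the Vietoris topology.

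The main obstacle is the transfinite induction establishing Borelness of the section map $\sigma$: this is where the compact-section hypothesis is essential, and it is the classical technical heart of the argument. The preliminary reduction from $K_\sigma$ to compact sections, which relies on a uniform parametrization of $K_\sigma$ sets, is also nontrivial but is now standard in the descriptive-set-theoretic literature; without either ingredient one cannot hope for Borelness of the projection, as the analogous statement fails already for analytic sets with closed sections.
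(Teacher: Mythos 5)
The paper does not actually prove this statement: Theorem \ref{thm:countable to one} is quoted as a classical result (Arsenin--Kunugui) with citations to \cite{Kec95} and \cite{Anu22}, and is used purely as a black box --- indeed, as the paper notes just before the statement, only in the case where the sections $A_x$ are countable, where it reduces to the Luzin--Novikov theorem. So there is no in-paper proof to compare yours against, and I can only assess your sketch on its own terms.

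Your overall architecture (compactify $Y$, decompose $A$ into countably many Borel sets with compact sections, then analyze the section map into $K(\hat Y)$) is the standard one, but there are two problems. First, the decomposition $A=\bigcup_n B_n$ with each $B_n$ Borel with compact sections is not a routine ``uniform version'' of the pointwise fact that a $K_\sigma$ set is a countable union of compacta: it is exactly the content of Saint Raymond's separation theorem, which is where essentially all of the difficulty of Arsenin--Kunugui lives; once you have it, the projection statement is nearly immediate, so your sketch has relocated rather than discharged the main burden. Second, and more seriously, your proof of the ``main claim'' --- Borelness of $\sigma(x)=B_x$ by transfinite induction on the Borel rank of $B$ --- has a genuine gap at the inductive step. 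When you write a Borel set $B$ with compact sections as a countable union or intersection of sets of lower rank, those lower-rank sets do not inherit compact sections, so the finite-subcover argument is unavailable for them, and $\{x : (\bigcap_n B_n)_x \cap U \neq \emptyset\}$ is not a Boolean combination of the Vietoris-subbasic data of the $B_n$ (decreasing nonempty open sections can have empty intersection). If such a rank induction went through, it would show that projections of arbitrary $\Pi^0_2$ subsets of $X\times\hat Y$ are Borel, which is false: a closed subset of $X\times\NN^\NN$ with non-Borel analytic projection becomes a $G_\delta$ subset of $X\times[0,1]^{\NN}$ under the standard embedding of $\NN^\NN$ as a $G_\delta$. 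The correct proofs of both ingredients go through separation or reflection arguments (or effective descriptive set theory), not an induction on Borel rank. For the purposes of this paper it would also suffice, and be genuinely easier, to prove only the countable-sections case via Luzin--Novikov.
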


\begin{rem}\label{rem:E_G^X and d_G are Borel}
    Let $X$ be a standard Borel space and $G \subset X^2$ be a countable Borel graph. By Theorem \ref{thm:countable to one}, the map $d_G\colon X^2 \to [0,\infty]$ is Borel and $E_G^X$ is a CBER.
\end{rem}

\begin{defn}
     Let $(X,\rho)$ be an extended metric space i.e. $\rho\colon X^2 \to [0,\infty]$ satisfies the axioms of metrics. The equivalence relation $E_\rho$ on $X$ is defined as follows; for any $x,y \in X$, $x\, E_\rho \, y \iff \rho(x,y)<\infty$. Given $U \subset X$ and $r > 0$, the equivalence relation $\mathfrak{F}_r(U)$ on $U$ is defined as follows; for any $x,y \in U$,
    \begin{align*}
        x \,\mathfrak{F}_r(U)\,y \iff \exists\, x_0,\cdots,x_n \in U,\,x_0=x \,\wedge\, x_n = y \,\wedge\, \wedge_{i=1}^n\,\rho(x_{i-1}, x_i) \le r.
    \end{align*}
    When $X$ is a standard Borel space and an extended metric $\rho \colon X^2 \to [0,\infty]$ is Borel measurable, $(X,\rho)$ is called a \emph{Borel extended metric space}.
\end{defn}

\begin{rem}
    Countable Borel graphs become a Borel extended metric space with their graph metric.
\end{rem}

The condition in Definition \ref{def:Borel asymptotic dimension} is from \cite[Lemma 3.1 (2)]{CJM23} (see also \cite[Lemma 2.4 (1)]{CJM23}).

\begin{defn}\cite[Definition 3.2]{CJM23}\label{def:Borel asymptotic dimension}
Let $(X,\rho)$ be a Borel extended metric space such that $E_\rho$ is countable. The \emph{Borel asymptotic dimension of} $(X,\rho)$, denoted $\mathrm{asdim}_{\mathbf{B}}(X,\rho)$, is the least $d \in \NNo$ such that for every $r > 0$, there exist Borel sets $U_0,U_1,\cdots,U_d \subset X$ such that $X=\bigcup_{i=0}^d U_i$ and $\exists\, M>0, \forall \,i,\, \forall\, (x,y) \in \mathfrak{F}_r(U_i),\, \rho(x,y)\le M$ (i.e. $\mathfrak{F}_r(U_i)$ is uniformly bounded for every $i \in \{0,\cdots,d\}$). If no such $d$ exists, then we define $\mathrm{asdim}_{\mathbf{B}}(X,\rho) = \infty$.
\end{defn}

\begin{rem}\label{rem:smooth and finite Borel asy dim}
    A metric space with infinite asymptotic dimension has infinite Borel asymptotic dimension but is smooth since all points are equivalent to one another. On the other hand, the orbit equivalence relation induced by the action of a non-elementary hyperbolic group on its Gromov boundary has finite Borel asymptotic dimension by \cite[Theorem A]{NV23}, but it is not smooth since the action is minimal (see \cite[Corollary 7.4.3 (ii)]{DSU17} and \cite[Corollary 20.19]{Anu22})
\end{rem}

\section{Gluing cubes to a Borel median graph}\label{sec:Gluing cubes to a Borel median graph}

In this section, we introduce a Borel extended metric space $\widetilde X$ by gluing cubes to a Borel median graph $(X,G)$. We start by preparing notations.

\begin{defn}\label{def:S_n}
Given $n \in \NNo$, we define the $\ell^1$-metric $d$ on $[0,1]^n$, that is, for $\alpha=(\alpha_1,\cdots,\alpha_n),\beta=(\beta_1,\cdots,\beta_n) \in [0,1]^n$, we have $d(\alpha,\beta) = \sum_{i=1}^n |\alpha_i - \beta_i|$. We denote by $\S_n$ the isometry group of $([0,1]^n,d)$. For convenience, we define $[0,1]^0$, $(0,1)^0$, and $\{0,1\}^0$ to be a singleton. Also, define $\S_0$ to be the trivial group $\{1\}$.
\end{defn}

\begin{rem}\label{rem:S_n is finite}
    We can see $g\{0,1\}^n = \{0,1\}^n$ for any $g \in \S_n$ and that $\S_n$ is embedded into the permutation group of $2^n$ points. Hence, $\S_n$ is finite for any $n \in \NNo$.
\end{rem}

\begin{defn}\label{def:widetilde X}
    Let $X$ be a standard Borel space and $G \subset X^2$ be a Borel median graph. We denote by $\widetilde X$ the (disjoint) union of the CAT(0) complexes whose 1-skeletons are all the $G$-components (note $(\widetilde{X})^{(0)}=X$). Also, denote by $d_{\widetilde X}$ the $\ell^1$-metric on $\widetilde X$. For $n \in \NN$, define $C_n \subset X^{\{0,1\}^n}$ by
    \begin{align*}
        C_n
        =
        \{(x_\alpha)_{\alpha \in \{0,1\}^n} \in X^{\{0,1\}^n} \mid \forall\, \alpha,\beta \in \{0,1\}^n,\, d(\alpha,\beta)=1 \iff (x_\alpha, x_\beta) \in G\}.
    \end{align*}
    For convenience, we define $C_0$ by $C_0 = X$. The group $\S_n$ acts on $C_n$ by permuting the coordinates, that is, for $g \in \S_n$ and $\vec{x} = (x_\alpha)_{\alpha \in \{0,1\}^n} \in C_n$, we have $(g\vec x)_\alpha = x_{g^{-1}\alpha}$. For $n \in \NNo$ and a $n$-cube $c \subset \widetilde X$ whose $0$-skeleton $c^{(0)}$ coincides with $\vec x = (x_\alpha)_{\alpha \in \{0,1\}^n} \in C_n$ as a set (i.e. $c^{(0)} = \{x_\alpha \mid \alpha \in \{0,1\}^n\}$), we define the unique isometry $F_{\vec x} \colon [0,1]^n \to c$ by mapping each $\alpha \in \{0,1\}^n$ to $x_\alpha$.
\end{defn}

\begin{rem}
    For example, if $X=\{0,1,2,3\}$ and $G=\{(0,1),(1,0),(2,3),(3,2)\}$, then $\widetilde X = [0,1]\sqcup [2,3]$, that is, $\widetilde X$ is the disjoint union of two closed intervals. We will consider $X$ as a subset of $\widetilde X$.
\end{rem}

\begin{rem}\label{rem:B_n,C_n Borel}
    For any $n \in \NNo$, the set $C_n$ is Borel since $G$ is Borel. Also, the orbit equivalence relation $E^{C_n}_{\S_n}$ induced by the action $\S_n \act C_n$ is smooth by Remark \ref{rem:finite Borel eq} and Remark \ref{rem:S_n is finite}.
\end{rem}

Next, we turn $\widetilde X$ into a standard Borel space by defining a $\sigma$-algebra.

\begin{defn}\label{def:sigma-algebra on widetilde X}
    Let $X$ be a standard Borel space and $G \subset X^2$ be a Borel median graph. For $n \in \NNo$, let $P_n \subset C_n$ be a Borel transversal for $E^{C_n}_{\S_n}$. Note $P_0 =C_0 = X$ by $\S_0=\{1\}$. Set $\mathcal{P} = (P_n)_{n \in \NNo}$. Define the map $F_\mathcal{P} \colon \bigsqcup_{n\in\NNo}P_n \times (0,1)^n \to \widetilde X$ as follows (see Definition \ref{def:widetilde X} for $F_{\vec x}$); 
    \begin{align}\label{eq:F_mathcal P}
    \forall\,n \in \NNo,\,\forall\,(\vec x, \vec t) \in P_n\times (0,1)^n,\, F_\mathcal{P}(\vec x, \vec t) = F_{\vec x}(\vec t).  
    \end{align}
    Here, for any $x \in P_0 \times(0,1)^0 = X$, we have $F_\mathcal{P}(x) = x \in \widetilde X$. Note that $F_\mathcal{P}$ is bijective and $\bigsqcup_{n\in\NNo}P_n \times (0,1)^n$ is a standard Borel space (see Remark \ref{rem:union of sbs}). Thus, $\widetilde X$ becomes a standard Borel space by defining the $\sigma$-algebra $\sigma_{\mathcal{P}}$ on $\widetilde X$ that makes $F_\mathcal{P}$ Borel isomorphic.
\end{defn}

In Lemma \ref{lem:siga_P is independent} below, we verify that the $\sigma$-algebra $\sigma_{\mathcal{P}}$ is canonical. In Section \ref{sec: Adding cubes to a Borel median graph (smooth version)}, we will introduce another way to define the same $\sigma$-algebra in a special case.

\begin{lem}\label{lem:siga_P is independent}
    Let $X$ be a standard Borel space and $G \subset X^2$ be a Borel median graph. Let $\mathcal{P} = (P_n)_{n \in \NNo}$ and $\mathcal{Q} = (Q_n)_{n \in \NNo}$ be such that $P_n$ and $Q_n$ are Borel transversals for $E^{C_n}_{\S_n}$ for every $n \in \NNo$. Then, we have $\sigma_\mathcal{P} = \sigma_\mathcal{Q}$.
\end{lem}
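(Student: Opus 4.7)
I would prove $\sigma_\mathcal{P}=\sigma_\mathcal{Q}$ by showing that the ``change-of-transversal'' map
\[
\Phi:=F_\mathcal{Q}^{-1}\circ F_\mathcal{P}\colon \bigsqcup_{n\in\NNo}P_n\times(0,1)^n \longrightarrow \bigsqcup_{n\in\NNo}Q_n\times(0,1)^n
\]
is a Borel isomorphism. Since $F_\mathcal{P}$ and $F_\mathcal{Q}$ are, by definition, Borel isomorphisms onto $\widetilde X$ with respect to $\sigma_\mathcal{P}$ and $\sigma_\mathcal{Q}$ respectively, this immediately yields $\sigma_\mathcal{P}=\sigma_\mathcal{Q}$.

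To build $\Phi$ in a Borel way, I use that for each $n$ the orbit equivalence relation $E^{C_n}_{\S_n}$ is smooth with finite classes (Remark \ref{rem:B_n,C_n Borel}), so by Lemma \ref{lem:smooth CBER} together with Remark \ref{rem:Borel selector}, the ``transversal-switching'' map $\phi_n\colon P_n\to Q_n$ sending $\vec{x}\in P_n$ to the unique element of $Q_n\cap(\S_n\cdot \vec{x})$ is Borel. For each $g\in\S_n$ set $P_n^g=\{\vec{x}\in P_n\mid g\vec{x}=\phi_n(\vec{x})\}$; this is Borel because $\vec{x}\mapsto g\vec{x}$ is a continuous coordinate permutation and $\phi_n$ is Borel. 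Fixing any linear order $<$ on the finite group $\S_n$ and setting $\widetilde{P_n^g}=P_n^g\setminus\bigcup_{g'<g}P_n^{g'}$ produces a Borel partition $P_n=\bigsqcup_{g\in\S_n}\widetilde{P_n^g}$ on each piece of which there is a single, uniformly chosen $g\in\S_n$ satisfying $\phi_n(\vec{x})=g\vec{x}$.

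The geometric input is the identity $F_{g\vec{x}}=F_{\vec{x}}\circ g^{-1}$, which holds because both sides agree on vertices (indeed $F_{g\vec{x}}(\alpha)=(g\vec{x})_\alpha=x_{g^{-1}\alpha}=F_{\vec{x}}(g^{-1}\alpha)$) and both are isometries of $[0,1]^n$ onto the same cube $c\subset\widetilde X$. Consequently, for $(\vec{x},\vec{t})\in \widetilde{P_n^g}\times(0,1)^n$,
\[
F_\mathcal{P}(\vec{x},\vec{t}) \;=\; F_{\vec{x}}(\vec{t}) \;=\; F_{g\vec{x}}(g\vec{t}) \;=\; F_\mathcal{Q}\bigl(\phi_n(\vec{x}),\,g\vec{t}\bigr),
\]
where $g\vec{t}\in(0,1)^n$ because every element of $\S_n$, being an isometry of $[0,1]^n$, preserves its interior. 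Therefore $\Phi(\vec{x},\vec{t})=(\phi_n(\vec{x}),g\vec{t})$ on each Borel piece $\widetilde{P_n^g}\times(0,1)^n$, which is a composition of Borel maps. Hence $\Phi$ is Borel, and by symmetry (swapping the roles of $\mathcal{P}$ and $\mathcal{Q}$) so is $\Phi^{-1}$, making $\Phi$ a Borel isomorphism.

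The main obstacle I expect is the bookkeeping caused by possibly non-trivial stabilizers of the $\S_n$-action on $C_n$: without stabilizer triviality, the relation $\phi_n(\vec{x})=g\vec{x}$ need not determine $g$ uniquely, which is why I select $g$ in a uniform Borel fashion via the fixed linear order on $\S_n$. Beyond this, everything is a direct verification that the explicit piecewise formula for $\Phi$ is Borel measurable.
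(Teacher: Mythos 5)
Your proof is correct and follows essentially the same route as the paper: partition $P_n$ by which group element $g\in\S_n$ realizes the change of transversal, observe the identity $F_{g\vec x}=F_{\vec x}\circ g^{-1}$, and conclude that $F_\mathcal{Q}^{-1}\circ F_\mathcal{P}$ is piecewise Borel. One small remark: the stabilizer issue you anticipated does not actually arise, since the vertices of an $n$-cube in a median graph are pairwise distinct, so $g\vec x=\vec x$ forces $g=1$ and the sets $P_n^g$ are already disjoint; your $\widetilde{P_n^g}$ refinement is therefore unnecessary, though harmless.
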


\begin{proof}
    Let $F_\mathcal{P} \colon \bigsqcup_{n\in\NNo}P_n \times (0,1)^n \to \widetilde X$ and $F_\mathcal{Q} \colon \bigsqcup_{n\in\NNo}Q_n \times (0,1)^n \to \widetilde X$ be the maps defined from $\mathcal{P}$ and $\mathcal{Q}$, respectively, as in Definition \ref{def:sigma-algebra on widetilde X}. Since both $P_n$ and $Q_n$ are Borel isomorphic to $C_n/\S_n$, there exists a Borel isomorphism $f_n \colon P_n \to Q_n$ for each $n \in \NNo$ such that for any $\vec x \in P_n$, there exists $g \in \S_n$ such that $f_n(\vec x) = g\vec x$. Given $n \in \NNo$ and $g \in \S_n$, define $P_{n,g} \subset P_n$ by
    \begin{align*}
        P_{n,g} = \{\vec x \in P_n \mid f_n(\vec x) = g\vec x \}.
    \end{align*}
    We have $P_n = \bigsqcup_{g \in \S_n} P_{n,g}$ and $P_{n,g}$ is Borel since $f_n$ and $g$ are Borel maps. Also, for any $(\vec x, \vec t) \in P_{n,g} \times (0,1)^n$, we have $F_\mathcal{Q}^{-1}\circ F_\mathcal{P} (\vec x, \vec t) = (f_n(\vec x), g\vec t)$. Hence, $F_\mathcal{Q}^{-1}\circ F_\mathcal{P}$ is Borel on $P_{n,g} \times (0,1)^n$ for any $n \in \NNo$ and $g \in \S_n$. Thus, $F_\mathcal{Q}^{-1}\circ F_\mathcal{P}$ is a Borel map. Since we can show that $F_\mathcal{P}^{-1}\circ F_\mathcal{Q}$ is Borel in the same way, we have $\sigma_\mathcal{P} = \sigma_\mathcal{Q}$.
\end{proof}

\begin{rem}\label{rem:sigma_X}
    By Lemma \ref{lem:siga_P is independent}, the $\sigma$-algebra $\sigma_\mathcal{P}$ in Definition \ref{def:sigma-algebra on widetilde X} is independent of the choice of Borel transversals $\mathcal{P}$. Hence, we will denote $\sigma_\mathcal{P}$ by $\sigma_{\widetilde X}$ and $(\widetilde{X},\sigma_{\widetilde X})$ is a standard Borel space. We can see that $\sigma_{\widetilde X}$ coincides the $\sigma$-algebra generated by the subsets
    \begin{align*}
        \{F_{\vec x}(\vec t)\mid \vec x \in A, \vec t \in B\}
    \end{align*}
    where $n$ varies over $\NNo$, $A$ varies over Borel partial transversals for $E_{\S_n}^{C_n}$, and $B$ varies over Borel subsets of $(0,1)^n$. Also, $X$ is a Borel subset of $\widetilde X$ and the $\sigma$-algebra on $X$ coincides with the $\sigma$-algebra induced by $\sigma_{\widetilde X}$.
\end{rem}

Finally, we show that $\widetilde X$ is a Borel extended metric space.

\begin{lem}\label{lem:ell1 metric on tilde X is Borel}
    Let $X$ be a standard Borel space and $G \subset X^2$ be a Borel median graph. Then, the following hold.
    \begin{itemize}
        \item[(1)]
        For any $n \in \NNo$, the map $C_n\times [0,1]^n \ni (\vec x,\vec t) \mapsto F_{\vec x}(\vec t) \in \widetilde X$ is Borel.
        \item[(2)]
        The $\ell^1$-metric $d_{\widetilde X} \colon (\widetilde X)^2 \to [0,\infty]$ is Borel.
    \end{itemize}
\end{lem}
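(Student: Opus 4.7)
The plan is to handle (1) by an orbit decomposition that reduces everything to the Borel isomorphism $F_\mathcal{P}$, and to handle (2) by approximating $d_{\widetilde X}$ by rescaled graph metrics of dyadic subdivisions.

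For (1), I first treat the interior case $\vec t \in (0,1)^n$. Smoothness of the finite Borel equivalence relation $E_{\S_n}^{C_n}$ gives a Borel retraction $r_n \colon C_n \to P_n$. Fixing a well-ordering on $\S_n$, the sets
\[
A_{n,g} = \{\vec x \in C_n : \vec x = g \cdot r_n(\vec x)\} \setminus \bigcup_{g' < g} \{\vec x \in C_n : \vec x = g' \cdot r_n(\vec x)\}
\]
form a Borel partition $C_n = \bigsqcup_{g \in \S_n} A_{n,g}$. On $A_{n,g}$ the two isometries $F_{\vec x}$ and $F_{r_n(\vec x)} \circ g^{-1}$ of $[0,1]^n$ onto the common cube they determine agree on the vertex data, hence coincide, giving
\[
F_{\vec x}(\vec t) = F_\mathcal{P}(r_n(\vec x), g^{-1}\vec t),
\]
which is Borel for $\vec t \in (0,1)^n$ since $F_\mathcal{P}$ is a Borel isomorphism and $g^{-1}$ preserves $(0,1)^n$. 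For general $\vec t$, I decompose $[0,1]^n$ into the open faces $\sigma_{J,\beta}$ indexed by $J \subset \{1,\ldots,n\}$ and $\beta \in \{0,1\}^{\{1,\ldots,n\} \setminus J}$, and use the Borel face embedding $\vec x \mapsto \vec y(\vec x, J, \beta) \in C_{|J|}$ with $y_\alpha = x_{\alpha \cup \beta}$; the factorization $F_{\vec x}(\vec t) = F_{\vec y}(\vec t|_J)$ reduces each face to the interior case.

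For (2), for each $k \in \NN$ I set
\[
X_k = \bigsqcup_{m \in \NNo} F_\mathcal{P}\bigl(P_m \times \{1/2^k, \ldots, (2^k-1)/2^k\}^m\bigr) \subset \widetilde X,
\]
the set of $2^k$-lattice points in $\widetilde X$, and $G_k \subset X_k^2$ the graph of pairs lying in a common cube whose representations differ by $1/2^k$ in exactly one coordinate. Using (1), together with Theorem \ref{thm:countable to one} applied to the finite $\S_n$-stabilizers, $X_k$ and $G_k$ are Borel and $G_k$ is a countable Borel graph, so $d_{G_k}$ is Borel by Remark \ref{rem:E_G^X and d_G are Borel}. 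Define a Borel rounding $\pi_k \colon \widetilde X \to X_k$ by coordinate-wise rounding in the $F_\mathcal{P}$-representation. The key identity $d_{\widetilde X}(p_k, q_k) = d_{G_k}(p_k, q_k)/2^k$ for $p_k, q_k \in X_k$, combined with the bound $d_{\widetilde X}(p, \pi_k(p)) \le \dim(\text{carrier of } p)/2^{k+1}$, yields
\[
d_{\widetilde X}(p, q) = \lim_{k \to \infty} \frac{d_{G_k}(\pi_k(p), \pi_k(q))}{2^k},
\]
a pointwise limit of Borel functions, hence Borel. When $p, q$ lie in distinct $G$-components both sides equal $\infty$.

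The main obstacle will be verifying the identity $d_{\widetilde X}(p_k, q_k) = d_{G_k}(p_k, q_k)/2^k$ for lattice points. This amounts to showing that $\ell^1$ geodesics in a CAT(0) cube complex between two lattice points can be realized along the subdivided $1$-skeleton, which relies on the combinatorial flexibility of the $\ell^1$ metric: within each cube any $\ell^1$ geodesic can be straightened to an axis-aligned path, and transitions between cubes occur along shared faces that contain lattice points. The boundary cases of the rounding map, where $\pi_k(p)$ lies on a face of strictly smaller dimension than the carrier of $p$ and is therefore represented via the face embedding from (1) rather than within the carrier itself, are handled routinely by the partition into open faces from (1).
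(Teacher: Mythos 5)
For part (1) your argument is essentially the paper's: both decompose $[0,1]^n$ into faces and on each face partition $C_n$ into Borel pieces indexed by elements of the symmetric group, reducing Borel measurability to the Borel isomorphism $F_{\mathcal P}$. Nothing substantive differs there.

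For part (2) you take a genuinely different route. The paper writes $\{d_{\widetilde X}<r\}$ as a countable union over combinatorial types of finite chains of segments in common cubes, defines a Borel ``local distance'' function $f_{\vec x_1,\vec x_2,\vec y}$, and applies Arsenin--Kunugui directly; no approximation is involved. You instead discretize: define dyadic lattice sets $X_k\subset\widetilde X$ and subdivision graphs $G_k$, obtain Borel graph metrics $d_{G_k}$ from Remark \ref{rem:E_G^X and d_G are Borel}, round Borel-measurably to lattice points, and exhibit $d_{\widetilde X}$ as a pointwise limit $\lim_k 2^{-k}d_{G_k}(\pi_k(\cdot),\pi_k(\cdot))$. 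This is a legitimate alternative, and conceptually cleaner once the key input is in hand. That input is the identity $d_{\widetilde X}(p,q)=2^{-k}d_{G_k}(p,q)$ for lattice points, which you flag but do not prove. It is equivalent to the fact that the $k$-th cubical subdivision of a CAT(0) cube complex is again a CAT(0) cube complex (so that its edge-path metric agrees with the $\ell^1$-metric via the separating-hyperplane count, and each hyperplane of $X$ becomes $2^k$ parallel hyperplanes of the subdivision). That fact is standard but nontrivial, and proving it requires roughly the same amount of work as the paper's direct encoding, which sidesteps it entirely. So your approach trades the paper's explicit Borel bookkeeping for a known geometric lemma; neither is clearly shorter, but yours is more modular.

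Two smaller points to tighten. First, the Borel measurability of $X_k$, $G_k$, and $\pi_k$ is asserted rather than shown; in particular $G_k$ requires an existential quantifier over cubes containing both endpoints, handled via Theorem \ref{thm:countable to one} just as in the paper's computation, so you do not really save on the Arsenin--Kunugui argument, only reorganize it. Second, both your argument and the paper's tacitly require $E_G^X$ (hence each $G_k$) to be countable even though the lemma's statement omits the word ``countable''; your use of Remark \ref{rem:E_G^X and d_G are Borel} depends on it, as does the paper's assertion that $B_{(a,b)}$ is countable. Also be explicit that your rounding $\pi_k$ must be made single-valued (e.g.\ round ties down) before Borel measurability can be discussed; this is routine but should be stated.
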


\begin{proof}
    As in Definition \ref{def:sigma-algebra on widetilde X}, fix Borel transversals $\mathcal{P} = (P_0)_{n \in \NNo}$ for $E^{C_n}_{\S_n}$ for each $n \in \NNo$ and let $F_\mathcal{P} \colon \bigsqcup_{n \in \NNo}P_n \times (0,1)^n \to \widetilde X$ be the map defined by \eqref{eq:F_mathcal P}.

    (1) Let $k \in\{0,\cdots,n\}$, $1 \le i_1<\cdots<i_{n-k} \le n$, and $(\e_i)_{i=1}^{n-k} \in \{0,1\}^{n-k}$. Define the $k$-cube $D \subset [0,1]^n$ by $D=\{(t_1,\cdots,t_n) \mid \forall\,\ell \in \{1,\cdots,n-k\},\, t_{i_\ell}=\e_\ell\}$. Fix an isometry $\phi\colon [0,1]^k \to D$. It's enough to show that the map $F'\colon C_n\times (0,1)^k \to P_k\times (0,1)^k$ defined by $F'(\vec x,\vec t) = F_\mathcal{P}^{-1} \circ F_{\vec x}(\phi(\vec t))$ is Borel. 
    
    For any $\vec x=(x_\alpha)_{\alpha \in \{0,1\}^n} \in C_n$, there exists $g\in \S_k$ such that $g(x_{\phi(\beta)})_{\beta \in \{0,1\}^k} \in P_k$. For $g \in \S_k$, we define $C_g \subset C_n$ by $C_g = \{(x_\alpha)_{\alpha \in \{0,1\}^n} \in C_n \mid g(x_{\phi(\beta)})_{\beta \in \{0,1\}^k} \in P_k\}$. The set $C_g$ is Borel since $P_k$ is Borel. For any $(\vec x,\vec t) \in C_g\times (0,1)^k$, we have
    \begin{align*}
        F'(\vec x,\vec t) = (g(x_{\phi(\beta)})_{\beta \in \{0,1\}^k}, g\vec t).
    \end{align*}
    Hence, $F'$ is Borel on $C_g\times (0,1)^k$ for any $g \in \S_k$. By $C_n = \bigsqcup_{g \in \S_k} C_g$, the map $F'$ is Borel.
    
    (2) For $\vec x=(x_\alpha)_{\alpha \in \{0,1\}^k} \in C_k$, where $k \in \NNo$, we define $\{\vec x\} \subset X$ by 
    \begin{align*}
        \{\vec x\} = \{x_\alpha \mid \alpha \in \{0,1\}^k\}.
    \end{align*}
    Given $(\vec x_1, \vec t_1) \in P_{k_1}\times(0,1)^{k_1}$, $(\vec x_2, \vec t_2) \in P_{k_2}\times(0,1)^{k_2}$, and $\vec y \in P_m$, where $k_1,k_2,m \in \NNo$, that satisfy $\{\vec x_1\} \cup \{\vec x_2\} \subset \{\vec y\}$, there exist $\vec s_1,\vec s_2 \in [0,1]^m$ such that $F_{\vec y}(\vec s_i)=F_{\vec x_i}(\vec t_i)$ for every $i\in \{1,2\}$ since $F_{\vec x_1}([0,1]^{k_1})$ and $F_{\vec x_2}([0,1]^{k_2})$ are faces of the $m$-cube $F_{\vec y}([0,1]^m)$ in $\widetilde X$. We define $f_{\vec x_1,\vec x_2,\vec y}(\vec t_1,\vec t_2) \in \RR$ by
    \begin{align*}
        f_{\vec x_1,\vec x_2,\vec y}(\vec t_1,\vec t_2) = |\vec s_1-\vec s_2|_1.
    \end{align*}
    Fix $k_1,k_2,m \in \NNo$, then the set $V \subset P_{k_1}\times(0,1)^{k_1} \times P_{k_2}\times(0,1)^{k_2} \times P_m \times \RR$ defined by
    \begin{align*}
       &(\vec x_1, \vec t_1,\vec x_2, \vec t_2,\vec y, u) \in V \iff \\
       &\{\vec x_1\} \cup \{\vec x_2\} \subset \{\vec y\} \,\wedge\,\big[\exists\,\vec s_1,\vec s_2 \in [0,1]^m,\, \wedge_{i=1}^2\,F_{\vec y}(\vec s_i)=F_{\vec x_i}(\vec t_i) \,\wedge\,|\vec s_1-\vec s_2|_1=u \big].
    \end{align*}
    is Borel by Theorem \ref{thm:countable to one} and Lemma \ref{lem:ell1 metric on tilde X is Borel} (1). Hence, the map $(\vec x_1, \vec t_1,\vec x_2, \vec t_2,\vec y)\mapsto f_{\vec x_1,\vec x_2,\vec y}(\vec t_1,\vec t_2)$ is Borel on the set of points satisfying $\{\vec x_1\} \cup \{\vec x_2\} \subset \{\vec y\} $.
    
    Let $r>0$. It is enough to show that $\{(a,b) \in (\widetilde X)^2 \mid d_{\widetilde X}(a,b)<r\}$ is Borel to show Borel measurability of $d_{\widetilde X}$. For any $(a,b) \in (\widetilde X)^2$, we have
    \begin{align*}
        &d_{\widetilde X}(a,b) <r \iff\\
        &\exists\, n \in \NN,\, \exists\,k_0,\cdots,k_n,m_1,\cdots,m_n \in \NNo,\,\\
        &\exists\,(\vec x_0, \vec t_0) \in P_{k_0}\times(0,1)^{k_0},\cdots,\exists\,(\vec x_n, \vec t_n) \in P_{k_n}\times(0,1)^{k_n},\,\exists\, \vec y_1 \in P_{m_1},\cdots,\,\exists\,\vec y_n \in P_{m_n},\\
        &F_\mathcal{P}(\vec x_0, \vec t_0)=a \,\wedge\,F_\mathcal{P}(\vec x_n, \vec t_n)=b \,\wedge\,\wedge_{i=1}^n\, \{\vec x_{i-1}\} \cup \{\vec x_i\} \subset \{\vec y_i\}\,\wedge\,\sum_{i=1}^n f_{\vec x_{i-1},\vec x_i,\vec y_i}(\vec t_{i-1},\vec t_i)<r.
    \end{align*}
    Fix $n \in \NN$ and $k_0,\cdots,k_n,m_1,\cdots,m_n \in \NNo$ and set $Y=P_{k_0} \times\dots\times P_{k_n}$, $Z=P_{m_1} \times\dots\times P_{m_n}$, and $W=(0,1)^{k_0}\times\cdots\times(0,1)^{k_n}$. Also, fix a Polish topology on $Y\times Z$ corresponding to its $\sigma$-algebra. We define $A \subset (\widetilde X)^2\times Y \times Z \times W$ by
    \begin{align*}
        &(a,b,\vec x_0,\cdots,\vec x_n,\vec y_1,\cdots, \vec y_n, \vec t_0,\cdots, \vec t_n) \in A \iff \\
        &F_\mathcal{P}(\vec x_0, \vec t_0)=a \,\wedge\,F_\mathcal{P}(\vec x_n, \vec t_n)=b \,\wedge\,\wedge_{i=1}^n\, \{\vec x_{i-1}\} \cup \{\vec x_i\} \subset \{\vec y_i\}\,\wedge\,\sum_{i=1}^n f_{\vec x_{i-1},\vec x_i,\vec y_i}(\vec t_{i-1},\vec t_i)<r.
    \end{align*}
    We can see that $A$ is Borel by Borel measurability of the map $f_{\vec x_{i-1},\vec x_i,\vec y_i}(\vec t_{i-1},\vec t_i)$ shown above. For $(a,b) \in (\widetilde X)^2$, $\mathbf{x} \in Y$, $\mathbf{y}\in Z$, $\vec t_0 \in (0,1)^{k_0}$, and $\vec t_n \in (0,1)^{k_n}$, we define
    \begin{align*}
        &A_{(a,b)}=\{(\mathbf{x'},\mathbf{y'},\mathbf{t'}) \in Y \times Z \times W \mid (a,b,\mathbf{x'},\mathbf{y'},\mathbf{t'}) \in A\},\\
        &B_{(a,b)} =\{(\mathbf{x'},\mathbf{y'},\vec t'_0, \vec t'_n) \in Y \times Z \times (0,1)^{k_0}\times(0,1)^{k_n}\mid \\
        &~~~~~~~~~~~~~~\exists \,(\vec t'_1,\cdots,\vec t'_{n-1})\in (0,1)^{k_1}\times\cdots\times(0,1)^{k_{n-1}},\,
        (a,b,\mathbf{x'},\mathbf{y'}, \vec t'_0, \vec t_1,\cdots,\vec t_{n-1},\vec t'_n) \in A \},\\
        &A_{(a,b,\mathbf{x},\mathbf{y},\vec t_0, \vec t_n)}=\{(\vec t'_1,\cdots,\vec t'_{n-1}) \in (0,1)^{k_1}\times\cdots\times(0,1)^{k_{n-1}} \mid \\
        &~~~~~~~~~~~~~~~~~~~~~~~~~~~~~~~~~~~~~~~~~~~~~~~~~~~~~~~~~~~~~~~~~~~~~~~(a,b,\mathbf{x},\mathbf{y}, \vec t_0, \vec t'_1,\cdots,\vec t'_{n-1},\vec t_n) \in A\}.
    \end{align*}
    Since $E_G^X$ is countable, $B_{(a,b)}$ is countable for any $(a,b) \in (\widetilde X)^2$. Given $(a,b,\mathbf{x},\mathbf{y},\vec t_0, \vec t_n) \in (\widetilde X)^2 \times Y \times Z \times (0,1)^{k_0}\times(0,1)^{k_n}$, the set $A_{(a,b,\mathbf{x},\mathbf{y},\vec t_0, \vec t_n)}$ is open in $(0,1)^{k_1}\times\cdots\times(0,1)^{k_{n-1}}$, hence $K_\sigma$. Hence, $A_{(a,b)}$ is $K_\sigma$ in $Y \times Z \times W$ for any $(a,b) \in (\widetilde X)^2$ by
    \begin{align*}
        A_{(a,b)} = \bigcup_{(\mathbf{x},\mathbf{y},\vec t_0, \vec t_n) \in B_{(a,b)}} \{(\mathbf{x},\mathbf{y},\vec t_0, \vec t_n)\} \times A_{(a,b,\mathbf{x},\mathbf{y},\vec t_0, \vec t_n)}.
    \end{align*}
    Thus, the set $\mathrm{proj}_{(\widetilde X)^2}(A)$ is Borel by Theorem \ref{thm:countable to one}. Since $\{(a,b) \in (\widetilde X)^2 \mid d_{\widetilde X}(a,b)<r\}$ is the countble union of all Borel sets $\mathrm{proj}_{(\widetilde X)^2}(A)$ over $n \in \NN$ and $k_0,\cdots,k_n,m_1,\cdots,m_n \in \NNo$, it is Borel.
\end{proof}

\begin{rem}
    In the same way as Lemma \ref{lem:ell1 metric on tilde X is Borel}, we can show that $\ell^p$-metric on $\widetilde X$ is Borel for any $p\in [1,\infty]$.
\end{rem}

\section{Borel version of the Sageev-Roller duality}\label{sec:Standard Borel space of hyperplanes}

In this section, we study a countable Borel median graph $G$ on a standard Borel space $X$ such that $E_G^X$ is smooth. The goal of this section is to prove Proposition \ref{prop:E_K smooth}. 

We first introduce the structure of $\sigma$-algebra to the set of hyperplanes of $G$, which turn it into a standard Borel space. By using the standard Borel space of hyperplanes, we introduce a quotient of $G$ and produce a new Borel median graph. This procedure can be considered as Borel version of the Sageev-Roller duality.

Essentially, the same notion as a standard Borel space of hyperplanes was also defined in \cite{CPTT23} under different conditions on Borel graphs (see \cite[Remark 4.5]{CPTT23}). Our formalization is tailored to our condition and convenient to introduce the Borel version of the Wright's construction later.

\begin{defn}
    Let $X$ be a standard Borel space and $G \subset X^2$ be a Borel median graph. We define an equivalence relation $E_\square^G \subset G^2$ as follows; for $(x,y), (z,w) \in G$, we define $(x,y)\,E_\square^G\, (z,w)$ if there exist $(x,y)=(x_0,y_0),(x_1,y_1)\cdots,(x_n,y_n)=(z,w) \in G$ such that for any $i \in \{1,\cdots,n\}$, one of $\{(x_{i-1},x_i),(y_{i-1},y_i)\} \subset G$, $(x_i,y_i) = (y_{i-1},x_{i-1})$, or $(x_i,y_i) = (x_{i-1},y_{i-1})$ holds.
\end{defn}

The equivalence classes of $E_\square^G$ can be used to turn the set of hyperplanes of $G$ into a standard Borel space by Lemma \ref{lem:std of hyperplanes} below. Recall that every connected component of a Borel graph is a CAT(0) cube complex.

\begin{lem}\label{lem:std of hyperplanes}
    Let $X$ be a standard Borel space and $G \subset X^2$ be a countable Borel median graph. Then, $E_\square^G \subset G^2$ is a CBER. Moreover, if $s\colon X\to X$ is a Borel selector for $E_G^X$, then there exists a Borel transversal $\H_s(G) \subset G$ for $E_\square^G$ satisfying ($\ast$) below.
    \begin{itemize}
    \item[{\rm($\ast$)}]
    For any $(x,y) \in \H_s(G)$, let $h \in \H\big([x]_{E_G^X}\big)$ be the hyperplane containing the edge $(x,y)$ of the $G$-component $[x]_{E_G^X}\big(=[y]_{E_G^X}\big)$, which is a CAT(0) cube complex (note $[x]_{E_G^X}=h^-\sqcup h^+$). Then, $x$ and $s(x)$ are in the same halfspace delimited by $h$ (i.e. either $\{x,s(x)\}\subset h^-$ or $\{x,s(x)\}\subset h^+$ holds).    
    \end{itemize}
    In particular, if $E_G^X$ is smooth, then $E_\square^G$ is smooth.
\end{lem}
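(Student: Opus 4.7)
My plan is to verify three things in succession: that $E_\square^G$ is a CBER (countability being immediate, Borelness requiring a short induction), that the selector $s$ canonically produces a Borel transversal $\H_s(G)$ satisfying ($\ast$), and that the smoothness clause then follows at once from Lemma~\ref{lem:smooth CBER}. The main obstacle I anticipate is the uniqueness step in the construction of $\H_s(G)$, which will rest on the convexity of $h^{(0)}$ recorded in Remark~\ref{rem:separate convex sets}.

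For Borel measurability of $E_\square^G$, I would write $E_\square^G = \bigcup_{n \in \NNo} R_n$, where $R_0$ is the diagonal of $G$ and $R_{n+1}$ is the composition of $R_n$ with the one-step relation $R_1$ (pairs related by one of the three moves in the definition of $E_\square^G$). The relation $R_1$ is Borel since $G$ is Borel, and in the inductive step the existential quantifier over the intermediate edge ranges, for fixed endpoints, over the countable set of edges in the common $E_G^X$-class, hence has $K_\sigma$ sections and Theorem~\ref{thm:countable to one} yields Borelness. Countability of $E_\square^G$ is clear since each class sits inside the edge set of a hyperplane in a countable $G$-component.

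Next, using the Borel selector $s$, I would set
$$G_s = \{(x,y) \in G : d_G(x, s(x)) < d_G(y, s(x))\},$$
which is Borel by Remark~\ref{rem:E_G^X and d_G are Borel}; note that $(x,y) \in G_s$ iff $x$ and $s(x)$ lie in the same halfspace delimited by the hyperplane $h_{(x,y)}$ containing $(x,y)$, which is exactly condition ($\ast$). I would then declare $(x,y) \in \H_s(G)$ iff $(x,y) \in G_s$ and no $(a,b) \in G_s$ with $(a,b)\,E_\square^G\,(x,y)$ has $d_G(a, s(x)) < d_G(x, s(x))$. Borelness of $\H_s(G)$ follows from another application of Theorem~\ref{thm:countable to one}: the relation of admitting a strictly closer witness $(a,b)$ is the projection of a Borel set with countable sections. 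For the transversal property, fix a hyperplane $h$ in a $G$-component $C$, write $s_0 = s(C)$, and let $\h_0 \in \{h^-, h^+\}$ be the halfspace containing $s_0$. By Remark~\ref{rem:separate convex sets}, $h^{(0)}$ is convex, so there is a unique closest vertex $g_h \in h^{(0)}$ to $s_0$; for any $w \in h^{(0)} \setminus \h_0$, the edge of $h$ through $w$ produces a strictly closer partner in $h^{(0)} \cap \h_0$, forcing $g_h \in h^{(0)} \cap \h_0$. Using the standard fact that every vertex of $h^{(0)}$ is incident to exactly one edge of $h$, the minimizer is the single edge $(g_h, g_h')$, which is therefore the unique element of $\H_s(G) \cap h$.

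Finally, smoothness of $E_\square^G$ drops out directly from Lemma~\ref{lem:smooth CBER} applied to the Borel transversal $\H_s(G)$. The only hazard I see is the uniqueness argument for the minimizer, which collapses if one of the two geometric facts above — convexity of $h^{(0)}$ or single-edge incidence at each vertex of $h^{(0)}$ — should fail, but both are standard features of CAT(0) cube complexes.
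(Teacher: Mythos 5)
Your proof is correct and follows essentially the same route as the paper: Borelness of $E_\square^G$ via iterated composition of the one-step relation and Theorem~\ref{thm:countable to one}, then a transversal built from the Borel selector by minimizing $d_G(s(x),\cdot)$ over each $E_\square^G$-class, with the convexity facts of Remark~\ref{rem:separate convex sets} establishing uniqueness of the minimizer. The only cosmetic difference is that you introduce the intermediate set $G_s$ and minimize within it, whereas the paper minimizes over the entire class and then observes that the minimizer automatically lands on the $s(x)$-side of the hyperplane; the resulting set $\H_s(G)$ is the same, and your appeal to single-edge incidence at each vertex of $h^{(0)}$ is a correct (if slightly more explicit) justification of the step the paper leaves to Remark~\ref{rem:separate convex sets}.
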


\begin{proof}
     Since $E_G^X$ is countable, $E_\square^G$ is countable. Next, we will show that $E_\square^G$ is Borel. Define a relation $R_\square^G \subset G^2$ by
    \begin{align*}
        R_\square^G = \{((x,y),(z,w)) \in G^2 \mid \{(x,z),(y,w)\} \subset G \,\vee\, (z,w) = (y,x)\,\vee\, (z,w) = (x,y)\},
    \end{align*}
    then $R_\square^G$ is Borel in $G^2$ since $G$ is Borel. For each $n \in \NN$, define a set $R_n \subset G^{n+1}$ and a map $p_n \colon G^{n+1}\to G^2$ by
    \begin{align*}
        &R_n=\big\{(e_i)_{i=0}^n \in G^{n+1} \mid \forall\,i\ge 1,\, (e_{i-1},e_i) \in R_\square^G\big\},\\
        &p_n((e_i)_{i=0}^n)=(e_0,e_n).
    \end{align*}
    By countability of $E_G^X$ and Theorem \ref{thm:countable to one}, the set $p_n(R_n)$ is Borel in $G^2$. Hence, $E_\square^G$ is Borel by $E_\square^G = \bigcup_{n\in \NN}p_n(R_n)$.

    Finally, let $s \colon X \to X$ be a Borel selector for $E_G^X$. Define $\H_s(G) \subset G$ by
    \begin{align*}
        \H_s(G) = \big\{(x,y) \in G\mid \forall\, (x',y') \in [(x,y)]_{E_\square^G}, \, d_G(s(x), x) \le d_G(s(x), x')\big\}.
    \end{align*}
    Note that for any $\big((x,y),(x',y')\big) \in E_\square^G$, we have $s(x) = s(y) = s(x') = s(y')$. To show Borel measurability of $\H_s(G)$, we define a set $B \subset G^2$ and a map $p \colon G^2 \to G$ by
    \begin{align*}
            &B = \big\{\big((x,y),(x',y')\big) \in E_\square^G \mid d_G(s(x), x) > d_G(s(x), x')\big\},\\
            &p(e,f) = e.
    \end{align*}
    The set $B$ is Borel in $E_\square^G$ by Remark \ref{rem:E_G^X and d_G are Borel} and the restriction $p|_B$ is a countable-to-one Borel map since $E_\square^G$ is countable. Hence, $p(B)$ is Borel by Theorem \ref{thm:countable to one}. Hence, $\H_s(G)$ is Borel by $\H_s(G) = G \setminus p_1(B)$. By Remark \ref{rem:separate convex sets}, we can see that $\H_s(G)$ is a Borel transversal for $E_\square^G$ satisfying ($\ast)$. The last statement in Lemma \ref{lem:std of hyperplanes} follows from Lemma \ref{lem:smooth CBER}.
\end{proof}

In Lemma \ref{lem:std of hyperplanes}, $\H_s(G)$ is bijective to the (disjoint) union of the sets of hyperplanes of all $G$-components. Therefore, we name this set in the following way.

\begin{defn}\label{def:std of hyperplanes}
    Let $X$ be a standard Borel space and $G \subset X^2$ be a countable Borel median graph. Given a Borel selector $s\colon X\to X$ for $E_G^X$, we call a Borel transversal $\H_s(G) \subset G$ for $E_\square^G$ satisfying the condition $(\ast)$ in Lemma \ref{lem:std of hyperplanes} a \emph{space of hyperplanes of $G$ for $s$}.
\end{defn}

\begin{rem}
    Even if $E_G^X$ is not smooth, $E_\square^G$ can be smooth, for example, when every $E_\square^G$-class is finite, which is the condition considered in \cite{CPTT23}. In this case, we can define a standard Borel space of hyperplanes of $G$ by taking a Borel transversal for $E_\square^G$.
\end{rem}

\begin{rem}
    By considering $\bigcup_{(x,y)\in \H_s(G)}\{(x,y),(y,x)\}$, we can also define a space of halfspaces of $G$, but it is not necessary in this paper.
\end{rem}

\begin{rem}\label{rem:use of countable to one map}
    As in the proof of Lemma \ref{lem:std of hyperplanes}, it is a standard argument to show Borel measurability by using projections and applying Theorem \ref{thm:countable to one}. Therefore, we will often omit details of this argument in what follows.
\end{rem}

In Definition \ref{def:notions on H_s(G)}, we introduce various notions related to hyperplanes for Borel median graphs. Many of them correspond to Definition \ref{def:hyperplane}. We define the sets $\I^-$ and $\I^+$ just to simplify proofs. 

\begin{defn}\label{def:notions on H_s(G)}
    Let $X$ be a standard Borel space and $G \subset X^2$ be a countable Borel median graph such that $E_G^X$ is smooth. Let $s\colon X\to X$ be a Borel selector for $E_G^X$ and $\H_s(G)$ be a space of hyperplanes of $G$ for $s$. We define an equivalence relation $E_G^{\H_s(G)}$ on $\H_s(G)$ as follows; for any $h,k \in \H_s(G)$, $h\,E_G^{\H_s(G)}\,k$ if $h$ and $k$ are in the same $G$-component (i.e. when $h=(x,y)$ and $k=(z,w)$, $h\,E_G^{\H_s(G)}\,k \Leftrightarrow x\, E_G^X\, z$). Given $h=(x,y) \in \H_s(G)$, we define $h^{(0)} = \bigcup\{\{x',y'\} \mid(x',y') \in G,\, (x,y)\, E_\square^G\, h\} \subset [x]_{E_G^X}$ and define $h^-,h^+ \subset [x]_{E_G^X}$ to be the halfspaces in $[x]_{E_G^X}$ delimited by the hyperplane in $\H\big([x]_{E_G^X}\big)$ containing $h$ such that $s(x) \in h^-$ (note $[x]_{E_G^X}=h^-\sqcup h^+$, $x \in h^-$, and $y \in h^+$ by Definition \ref{def:std of hyperplanes}). We also call $h^-,h^+$ \emph{halfspaces delimited by $h$}. Given $x,y \in X$ with $x\,E_G^X\,y$ and $h \in \H_s(G)$, we say that $h$ \emph{separates $x$ and $y$} if $h$ is in the same $G$-component as $x$ and $y$ and we have $x\in h^-\iff y \in h^+$. For $x,y \in X$, we define $\H_s(G)(x,y)$ to be the set of all $h \in \H_s(G)$ that separates $x$ and $y$. We define $\I^-,\I^+ \subset X \times \H_s(G)$ by
    \begin{align*}
        \I^- = \{(x,h) \in X \times \H_s(G) \mid x \in h^-\} {\rm ~~~and~~~} \I^+ = \{(x,h) \in X \times \H_s(G) \mid x \in h^+\}.
    \end{align*}
    Given $\K \subset \H_s(G)$, we define an equivalence relation $E_\K^X \subset X^2$ as follows; for $x,y \in X$, $x \, E_\K^X \,y$ if $x \, E_G^X \,y$ and no $h \in \K$ separates $x$ and $y$. We denote the quotient $X / E_\K^X$ by $X_\K$ and define a graph $G_\K \subset (X_\K)^2$ as follows; for $x,y \in X$, $([x]_{E_\K^X},[y]_{E_\K^X}) \in G_\K$ if $x \, E_G^X \,y$ and there exists a unique $h \in \K$ that separates $x$ and $y$. We denote by $q_\K^X \colon X\to X_\K$ the quotient map.
\end{defn}

\begin{rem}\label{rem:X_K is median graphs}
    In other words, given $\K \subset \H_s(G)$, the graph $(X_\K, G_\K)$ is a (disjoint) union of median graphs obtained by taking the quotient of every $G$-component by the hyperplanes belonging to $\K$ and the $G$-component.
\end{rem}

To prove Proposition \ref{prop:E_K smooth}, we prepare auxiliary lemmas, Lemma \ref{lem:notions on Borel median graph are Borel} and Lemma \ref{lem:seq of transversal}. Lemma \ref{lem:notions on Borel median graph are Borel} shows basic properties of some of the notions defined above.

\begin{lem}\label{lem:notions on Borel median graph are Borel}
    Let $X$ be a standard Borel space and $G \subset X^2$ be a countable Borel median graph such that $E_G^X$ is smooth. Let $s\colon X\to X$ be a Borel selector for $E_G^X$ and $\H_s(G)$ be a space of hyperplanes of $G$ for $s$. Then, the following hold. 
    \begin{itemize}
        \item[(1)]
        The equivalence relation $E_G^{\H_s(G)}$ on $\H_s(G)$ is a smooth CBER. Moreover, the map $\H_s(G)/E_G^{\H_s(G)} \to X/E_G^X$ defined from $q_G^{\H_S(G)}\colon\H_s(G)\ni (x,y) \mapsto q_G^X(x) \in X/E_G^X$ is injective and Borel.
        \item[(2)] 
        The sets $\I^-,\I^+ \subset X \times \H_s(G)$ are Borel.
    \end{itemize}
\end{lem}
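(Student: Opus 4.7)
Part (1) I expect to handle by a direct unpacking of the hypotheses. Each $E_G^{\H_s(G)}$-class is contained in a single (necessarily countable) $G$-component and hence contains only countably many edges, giving countability. Borelness follows because $E_G^{\H_s(G)}$ is the pullback of the Borel relation $E_G^X$ under the Borel projection $\H_s(G)^2 \to X^2$, $((x,y),(z,w)) \mapsto (x,z)$. For smoothness, the composition $\H_s(G) \ni (x,y) \mapsto s(x) \in X$ is Borel and reduces $E_G^{\H_s(G)}$ to equality on $X$, since $s$ is a selector for $E_G^X$. The induced quotient map $\H_s(G)/E_G^{\H_s(G)} \to X/E_G^X$ is injective by definition of $E_G^{\H_s(G)}$, and Borel because its lift $(x,y) \mapsto q_G^X(x)$ is a composition of two Borel maps (the quotient $X/E_G^X$ being standard Borel by smoothness of $E_G^X$).

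For (2), I would first reduce to showing $\I^+$ is Borel: writing $\pi_1\colon \H_s(G) \to X$ for the first-coordinate projection, the set $\{(x,h) : x\,E_G^X\,\pi_1(h)\}$ is visibly Borel, equals $\I^- \sqcup \I^+$, and contains $\I^-$ as its complement-in-$\I^+$. The key device is a Borel map $p\colon G \to \H_s(G)$ assigning to each edge the unique element of its $E_\square^G$-class inside the transversal $\H_s(G)$. This exists and is Borel by applying countable-to-one uniformization (Lusin-Novikov) to the Borel relation $\{(e,h) \in G \times \H_s(G) : e\,E_\square^G\,h\}$, whose $e$-sections are singletons because $\H_s(G)$ is a transversal for the CBER $E_\square^G$ furnished by Lemma \ref{lem:std of hyperplanes}.

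With $p$ in hand, I invoke the standard parity fact for CAT(0) cube complexes: for $h = (u,v) \in \H_s(G)$ and $x$ in the same $G$-component as $u$, we have $x \in h^+$ if and only if every (equivalently, some) edge-path from $s(u)$ to $x$ crosses the hyperplane represented by $h$ an odd number of times. Concretely, $(x,h) \in \I^+$ iff $x\,E_G^X\,\pi_1(h)$ and there exist $n \in \NN$ and $(x_0,\ldots,x_n) \in X^{n+1}$ with $x_0 = s(\pi_1(h))$, $x_n = x$, $(x_{i-1},x_i) \in G$ for all $i$, and $\#\{i : p(x_{i-1},x_i) = h\}$ odd. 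This presents $\I^+$ as the projection to $X \times \H_s(G)$ of a Borel subset of $X \times \H_s(G) \times \bigsqcup_{n \in \NN} X^{n+1}$. For each fixed $(x,h)$, the section consists of paths living in the countable $G$-component of $x$, hence is countable and therefore $K_\sigma$; Theorem \ref{thm:countable to one} concludes that $\I^+$ is Borel.

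The main obstacle is entirely within (2): constructing the Borel edge-to-hyperplane map $p$ and framing the path-existence statement so that its sections are $K_\sigma$, which is what allows Arsenin-Kunugui to apply. Once those two points are in place, the parity principle for median graphs and a routine projection argument close the proof; part (1) is essentially bookkeeping on top of the smoothness hypothesis.
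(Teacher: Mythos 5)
Your proposal is correct. Part (1) is essentially the paper's argument (the paper witnesses smoothness via $q_G^X$ and shows the induced quotient map is Borel by passing through a Borel transversal, whereas you use $s$ and the quotient $\sigma$-algebra directly; these are interchangeable). In part (2) you take a genuinely different, slightly longer route: the paper characterizes $\I^-$ directly by connectivity --- $(a,(x,y))\in\I^-$ iff there is a $G$-path from $a$ to $x$ none of whose edges is $E_\square^G$-equivalent to $(x,y)$ --- which needs no auxiliary map, since the condition ``$\neg\big((x_{i-1},x_i)\,E_\square^G\,(x,y)\big)$'' is already Borel; it then applies Theorem \ref{thm:countable to one} exactly as you do. Your version instead first builds the Borel edge-to-hyperplane assignment $p\colon G\to\H_s(G)$ (correctly, via the singleton-section graph and Arsenin--Kunugui) and then uses the parity of crossings along a path from $s(\pi_1(h))$, which is valid because condition $(\ast)$ in Definition \ref{def:std of hyperplanes} guarantees $s(\pi_1(h))\in h^-$ and because crossing parity determines the halfspace. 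Both arguments reduce to a projection of a Borel set with countable (hence $K_\sigma$) sections; the paper's is a bit more economical, while yours produces the reusable map $p$ as a byproduct. No gaps.
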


\begin{proof}
    (1) We can see that $E_G^{\H_s(G)}$ is a CBER since $E_G^X$ is a CBER. The map $q_G^{\H_S(G)} \colon \H_s(G) \to X/E_G^X$ defined in the statement is Borel and satisfies $h\,E_G^{\H_s(G)}\,k \Leftrightarrow q_G^{\H_S(G)}(h)=q_G^{\H_S(G)}(k)$ for any $h,k \in \H_s(G)$. Hence, $E_G^{\H_s(G)}$ is smooth.

    By Lemma \ref{lem:smooth CBER}, take a Borel transversal $A$ for $E_G^{\H_s(G)}$. Let $\psi \colon \H_s(G)/E_G^{\H_s(G)} \to A$ be the Borel isomorphism defined by taking the inverse of the restriction to $A$ of the quotient map $\H_s(G) \to \H_s(G)/E_G^{\H_s(G)}$. The map $\H_s(G)/E_G^{\H_s(G)} \to X/E_G^X$ defined from $q_G^{\H_S(G)}$ is equal to $q_G^{\H_S(G)}\circ\psi$, hence Borel and injective.
    
    (2) For any $(a,(x,y)) \in X \times \H_s(G)$, we have $(a,(x,y)) \in \I^-$ if and only if there exists a path in the graph $G$ from $a$ to $x$ none of whose edges is in the same hyperplane as $(x,y)$. This implies
    \begin{align*}
        (a,(x,y)) \in \I^- \iff &\exists\,n \in \NN,\,\exists\,x_0,\cdots,x_n\in X,\, x_0=a \,\wedge\,x_n=x\\
        &~~~~\wedge\,\Big(\forall\,i \ge 1,x_{i-1}=x_i \vee \big[(x_{i-1},x_i) \in G \wedge \neg((x_{i-1},x_i)\,E_\square^G\, (x,y)) \big]\Big).
    \end{align*}
    Hence, $\I^-$ is Borel by the countability of $E_G^X$ and Theorem \ref{thm:countable to one} (see Remark \ref{rem:use of countable to one map}). Borel measurability of $\I^+$ follows in the same way.
\end{proof}

\begin{rem}\label{rem:map from H to X/E_G}
    The natural map $\H_s(G)/E_G^{\H_s(G)} \to X/E_G^X$ in Lemma \ref{lem:notions on Borel median graph are Borel} (1) is not necessarily surjective. Indeed, if a $G$-component is a singleton, then this component has no hyperplane.
\end{rem}

Lemma \ref{lem:seq of transversal} below is straightforward, but we write down the proof for completeness.

\begin{lem}\label{lem:seq of transversal}
    Let $X$ be a standard Borel space and $E \subset X^2$ be a CBER. If $E$ is smooth, then there exists a sequence of Borel transversals $(A_n)_{n \in \NN}$ for $E$ such that $X = \bigcup_{n \in \NN}A_n$. As a corollary, there exists a sequence of Borel partial transversals $(B_n)_{n \in \NN}$ for $E$ such that $X = \bigsqcup_{n \in \NN}B_n$
\end{lem}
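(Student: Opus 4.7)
The plan is to combine a single Borel transversal, coming from smoothness, with the Feldman--Moore generating automorphisms to cover $X$ by countably many translates of that transversal.

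First I would invoke the smoothness hypothesis together with the equivalence (1) $\Leftrightarrow$ (2) in Lemma \ref{lem:smooth CBER} to obtain a single Borel transversal $T \subset X$ for $E$. Next I would apply the Feldman--Moore theorem to $E$: there exists a countable group $\Gamma = \{\gamma_n\}_{n \in \NN}$ acting on $X$ by Borel automorphisms so that $E$ is the orbit equivalence relation of $\Gamma \act X$. In particular, each $\gamma_n$ sends $E$-classes to $E$-classes bijectively, and for every $(x, y) \in E$ there is some $n \in \NN$ with $\gamma_n(x) = y$.

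With these ingredients in hand, I would define $A_n = \gamma_n(T)$ for each $n \in \NN$. Since each $\gamma_n$ is a Borel bijection of $X$, each $A_n$ is Borel. The fact that $\gamma_n$ permutes $E$-classes and fixes each one setwise (because $\gamma_n \in [E]$) means that $A_n$ still meets every $E$-class in exactly one point, so $A_n$ is a Borel transversal for $E$. Covering holds because for an arbitrary $x \in X$, the selector $s$ associated to $T$ satisfies $x \, E \, s(x)$, so there exists $n$ with $\gamma_n(s(x)) = x$, whence $x \in A_n$. Thus $X = \bigcup_{n\in\NN} A_n$, which proves the main statement.

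For the corollary I would apply the standard disjointification trick: set $B_1 = A_1$ and $B_n = A_n \setminus \bigcup_{m<n} A_m$ for $n \ge 2$. Each $B_n$ is Borel and, being a subset of the transversal $A_n$, is a Borel partial transversal for $E$; by construction $X = \bigsqcup_{n\in\NN} B_n$. I do not expect any real obstacle here; the only non-trivial input is Feldman--Moore (or, equivalently, the Lusin--Novikov uniformization applied to the countable-to-one Borel set $E \subset X^2$), and the rest is routine bookkeeping.
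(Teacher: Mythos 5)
Your proposal is correct and follows essentially the same route as the paper: take a Borel transversal from smoothness, translate it by the Feldman--Moore generating automorphisms to cover $X$, and disjointify for the corollary. No issues.
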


\begin{proof}
    Since $E$ is smooth, there exists a Borel transversal $A$ for $E$ by Lemma \ref{lem:smooth CBER}. By Feldman-Moore Theorem, there exists a countable group $H$ and a Borel action $H \act X$ such that $E = E_H^X$, where $E_H^X$ is the orbit equivalence relation of the action $H\act X$. Let $H = \{g_n \mid n \in \NN\}$. Define $A_n \subset X$ by $A_n = g_n A$, then each $A_n$ is a Borel transversal for $E$ by $E = E_H^X$ and we have $X = \bigcup_{n \in \NN}A_n$.

    The corollary follows by defining $B_n$ by $B_n = A_n \setminus \bigcup_{i=1}^{n-1} A_i$.
\end{proof}

\begin{prop}\label{prop:E_K smooth}
    Let $X$ be a standard Borel space and $G \subset X^2$ be a countable Borel median graph such that $E_G^X$ is smooth. Let $s\colon X\to X$ be a Borel selector for $E_G^X$ and $\H_s(G)$ be a space of hyperplanes of $G$ for $s$. Then, for any Borel subset $\K \subset \H_s(G)$, the following (1)-(5) hold.
    \begin{itemize}
        \item[(1)]
        $E_\K^X$ is a smooth CBER, hence $X_\K$ is a standard Borel space.
        \item[(2)]
        $G_\K$ is a countable Borel median graph on $X_\K$
        \item[(3)]
        $(q_\K^X\circ s)(X) \subset X_\K$ is a Borel transversal for $E_{G_\K}^{X_\K}$. Hence, $E_{G_\K}^{X_\K}$ is a smooth CBER.
        \item[(4)]
        Let $s_\K\colon X_\K \to (q_\K^X\circ s)(X)$ be the natural Borel selector for $E_{G_\K}^{X_\K}$ as in Remark \ref{rem:Borel selector}. Then, the subset $(q_\K^X \times q_\K^X)(\K) \subset (X_\K)^2$ is a space of hyperplanes of $G_\K$ for $s_\K$, which we denote by $\H_{s_\K}(G_\K)$, and $(q_\K^X \times q_\K^X)|_\K \colon \K \to \H_{s_\K}(G_\K)$ is a Borel isomorphism.
        \item[(5)] 
        The natural map $\Phi_\K^X \colon X/E_G^X \to X_\K/E_{G_\K}^{X_\K}$ sending each $G$-component to its quotient is Borel isomorphic.
    \end{itemize}
\end{prop}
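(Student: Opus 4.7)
The geometric backbone is that each $E_\K^X$-class is convex in its $G$-component, being the intersection of halfspaces indexed by hyperplanes in $\K$ (Remark \ref{rem:separate convex sets}); this makes nearest-point projection to an $E_\K^X$-class single-valued. I establish (1)--(5) in the stated order. In each case, Borel measurability is reduced to countability of the relevant fibers plus Theorem \ref{thm:countable to one}, while injectivity/bijectivity claims are reduced to the Sageev--Roller identification recorded in Remark \ref{rem:quotient of CAT(0) cc}.

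For (1)--(3): Borelness of $E_\K^X$ follows from Lemma \ref{lem:notions on Borel median graph are Borel}(2) and the finiteness of $\H_s(G)(x,y)$ when $x\,E_G^X\,y$, via Theorem \ref{thm:countable to one}. For smoothness, define $t \colon X \to X$ by sending $x$ to the unique vertex of $[x]_{E_\K^X}$ nearest to $s(x)$; Borelness of $t$ follows from Borelness of $d_G$ (Remark \ref{rem:E_G^X and d_G are Borel}) and countability of $E_G^X$, so $t$ is a Borel selector for $E_\K^X$ and $X_\K$ is standard Borel (Remark \ref{rem:quotient is std}). Borelness of $G_\K \subset (X_\K)^2$ follows from the description ``$([x],[y]) \in G_\K$ iff $x\,E_G^X\,y$ and exactly one $h \in \K$ separates $x$ and $y$'', again via Theorem \ref{thm:countable to one}; each $G_\K$-component is a median graph by Remark \ref{rem:X_K is median graphs}, and countability of $E_{G_\K}^{X_\K}$ is inherited from $E_G^X$. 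Since $E_\K^X \subset E_G^X$, distinct $G$-components project to distinct $G_\K$-components, so $(q_\K^X \circ s)(X)$ meets each $G_\K$-component exactly once; it is Borel by Lusin--Suslin applied to the injective Borel map $q_\K^X|_{s(X)}$.

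For (4), the crux is injectivity of $(q_\K^X \times q_\K^X)|_\K$. If $(x_1,y_1),(x_2,y_2) \in \K$ have equal images, then $x_1\,E_\K^X\,x_2$ forces $x_2$ into the same halfspace of the hyperplane $h$ of $(x_1,y_1)$ as $x_1$ (since $h \in \K$), and likewise for the $y_i$; hence $(x_2,y_2)$ crosses $h$, so it is $E_\square^G$-equivalent to $(x_1,y_1)$, and thus equal to it since $\H_s(G)$ is a transversal. By Lusin--Suslin, the image $\H_{s_\K}(G_\K)$ is Borel and the restriction is a Borel isomorphism onto it. That $\H_{s_\K}(G_\K)$ is an $E_\square^{G_\K}$-transversal uses Remark \ref{rem:quotient of CAT(0) cc}: distinct elements of $\K$ correspond to distinct hyperplanes of each $G_\K$-component. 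Condition $(\ast)$ transports through the quotient because $s_\K(q_\K^X(x)) = q_\K^X(s(x))$ by (3) and $q_\K^X$ preserves halfspaces of hyperplanes in $\K$. For (5), $\Phi_\K^X$ is well-defined since $E_\K^X \subset E_G^X$, surjective since every $G_\K$-class is the image of a $G$-class, and injective via path-lifting: edges of $G_\K$ lift to edges of $G$ whose endpoints lie in $E_\K^X$-equivalent, hence $G$-equivalent, vertices. Borelness of $\Phi_\K^X$ is immediate from Borelness of $q_\K^X$ and smoothness (Remark \ref{rem:quotient is std}); the inverse is Borel by Lusin--Suslin.

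The main technical obstacle is (4): I must simultaneously verify that the fiberwise Sageev--Roller bijection $\K \leftrightarrow \H(X_\K)$ (Remark \ref{rem:quotient of CAT(0) cc}) upgrades to a Borel isomorphism of the \emph{global} standard Borel spaces of hyperplanes $\K$ and $\H_{s_\K}(G_\K)$, and that the orientation convention encoded in $s_\K$ agrees with the push-forward of the orientation encoded in $s$ used to single out $\K$ inside $\H_s(G)$. The convexity-based selector $t$ from (1) is the tool that keeps these orientations compatible across the quotient.
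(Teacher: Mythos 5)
Your proof is correct, and the overall architecture matches the paper's (everything reduces to countable-to-one Borel maps and Theorem \ref{thm:countable to one}, plus the Sageev--Roller identification of $\K$ with $\H(X_\K)$). The substantive difference is in part (1). The paper proves smoothness of $E_\K^X$ by producing a Borel reduction $\psi\colon X\to X\times 2^\NN$: it picks a sequence $(A_n)$ of Borel transversals for $E_G^{\H_s(G)}$ covering $\H_s(G)$ (Lemma \ref{lem:seq of transversal}), encodes for each $x$ the bit $\psi_n(x)$ recording which halfspace of the $n$th hyperplane of $[x]_{E_G^X}$ contains $x$, and sets $\psi(x)=(s(x),\{\psi_n(x)\}_n)$. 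You instead observe that $[x]_{E_\K^X}=\bigcap_{h\in\K}\alpha_x(h)$ is convex in its $G$-component, so the nearest-point projection of $s(x)$ onto $[x]_{E_\K^X}$ (Remark \ref{rem:separate convex sets}) is single-valued, and you verify directly that the resulting map $t$ is a Borel selector for $E_\K^X$. This is a clean geometric shortcut that avoids introducing the auxiliary transversals $(A_n)$; the price is that the paper re-uses its $\psi_n$'s in the proof of (2), so under your approach (2) needs the separate (correct, and essentially definitional) Arsenin--Kunugui argument you indicate. Your injectivity argument for $(q_\K^X\times q_\K^X)|_\K$ in (4) is an explicit unwinding of what the paper compresses into citations of Remarks \ref{rem:quotient of CAT(0) cc} and \ref{rem:X_K is median graphs}, and your treatment of the orientation condition $(\ast)$ via $s_\K\circ q_\K^X=q_\K^X\circ s$ and halfspace-preservation is the right point to check. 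No gaps.
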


\begin{proof}
    (1) Since $E_G^{\H_s(G)}$ is smooth by Lemma \ref{lem:notions on Borel median graph are Borel} (1), there exists a sequence of Borel transversals $(A_n)_{n \in \NN}$ for $E_G^{\H_s(G)}$ such that $\H_s(G) = \bigcup_{n \in \NN}A_n$ by Lemma \ref{lem:seq of transversal}. For each $n \in \NN$, define a map $\psi_n \colon X \to \{0,1\}$ by 
    \begin{align}\label{eq:E_K^X smooth}
    \psi_n(x)=
        \begin{cases}
        1 & {\rm if~} \exists\,h \in \K \cap A_n,\,x \in h^-,\\
        0 & {\rm otherwise}.
        \end{cases}
    \end{align}
    Note that $\psi_n$ is well-defined since $A_n$ is a transversal for $E_G^{\H_s(G)}$. For any $x \in X$, we have $x \in \psi_n^{-1}(1) \iff \exists\, h \in \K \cap A_n,\, (x,h) \in \I^-$. Hence, $\psi_n^{-1}(1)$ is Borel by countability of $E_G^X$, Lemma \ref{lem:notions on Borel median graph are Borel} (2), and Theorem \ref{thm:countable to one}. Thus, $\psi_n$ is a Borel map for any $n \in \NN$. Define a map $\psi \colon X \to X \times 2^\NN$ by $\psi(x) = (s(x), \{\psi_n(x)\}_{n \in \NN})$. By $\K = \bigcup_{n \in \NN}\K \cap A_n$, $\psi$ is a Borel map such that
    \begin{align*}
        x \, E_\K^X \,y \iff x \,E_G^X\,y \,\wedge\, \forall\,n \in \NN,\, \psi_n(x) = \psi_n(y) \iff \psi(x) = \psi(y)
    \end{align*}
    for any $x,y \in X$. Thus, $E_\K^X$ is a smooth CBER. Hence, $X_\K$ is a standard Borel space by Remark \ref{rem:quotient is std}.
    
    (2) Let $\psi_n$ be as in \eqref{eq:E_K^X smooth}, then for any $(x,y) \in X^2$, we have $(x,y) \in (q_\K^X \times q_\K^X)^{-1}(G_\K)$ if and only if
    \begin{align*}
        x \,E_G^X\,y \,\wedge\, \exists\,(x_1,y_1) \in \K,\, \Big[x \,E_G^X\,x_1 \,\wedge\, \big(\forall\, n \in\NN,\, (x_1,y_1) \notin \K \cap A_n \Leftrightarrow \psi_n(x) = \psi_n(y) \big)\Big].
    \end{align*}
    Hence, $(q_\K^X \times q_\K^X)^{-1}(G_\K)$ is Borel in $X^2$ by countability of $E_G^X$ and Theorem \ref{thm:countable to one}. Thus, $G_\K$ is Borel. By countability of $E_G^X$ and Remark \ref{rem:X_K is median graphs}, $G_\K$ is a countable Borel median graph.

    (3) Since $s(X)$ is a Borel transversal for $E_G^X$ and the restriction $q_\K^X|_{s(X)} \colon s(X) \to X_\K$ is injective, the set $(q_\K^X\circ s)(X) \subset X_\K$ is a Borel transversal for $E_{G_\K}^{X_\K}$ by Theorem \ref{thm:countable to one} and Remark \ref{rem:X_K is median graphs}. Hence, $E_{G_\K}^{X_\K}$ is a smooth CBER by Lemma \ref{lem:smooth CBER}.

    (4) By Remark \ref{rem:quotient of CAT(0) cc} and Remark \ref{rem:X_K is median graphs}, the restriction $(q_\K^X \times q_\K^X)|_\K \colon \K \to (G_\K)^2$ is injective and the set $(q_\K^X \times q_\K^X)(\K)$ is a transversal for $E_\square^{G_\K}$. By this injectivity and Theorem \ref{thm:countable to one}, $(q_\K^X \times q_\K^X)(\K)$ is Borel and the map $(q_\K^X \times q_\K^X)|_\K$ is Borel isomorphic. We can see that $(q_\K^X \times q_\K^X)(\K)$ is a space of hyperplanes of $G_\K$ for $s_\K$ since $\H_s(G)$ is a space of hyperplanes for $s$.
    
    (5) Since $s(X)$ and $(q_\K^X\circ s)(X)$ are Borel transversals for $E_G^X$ and $E_{G_\K}^{X_\K}$ respectively, the restrictions $q_G^X|_{s(X)} \colon s(X) \to X/E_G^X$ and $q_{G_\K}^{X_\K}|_{(q_\K^X\circ s)(X)} \colon (q_\K^X\circ s)(X) \to X_\K/E_{G_\K}^{X_\K}$ are Borel isomorphic (see Definition \ref{def:graph}). The map $q_\K^X \colon s(X) \to (q_\K^X\circ s)(X)$ is Borel isomorphic by its injectivety and Theorem \ref{thm:countable to one}. Hence, $\Phi^X_\K$ is Borel isomorphic by $\Phi_\K^X = q_{G_\K}^{X_\K} \circ q_\K^X \circ (q_G^X)^{-1}$.
\end{proof}

\begin{rem}\label{rem:H_{G_K}=(q times q)(K)}
In Proposition \ref{prop:E_K smooth} (4), since $s(X)$ and $(q_\K^X\circ s)(X)$ are Borel transversals for $E_G^X$ and $E_{G_\K}^{X_\K}$ respectively, we can fix a unique point in $s(X)$ (resp. $(q_\K^X\circ s)(X)$) for each $G$-component (resp. $G_\K$-component) and define the order on $\H_s(G)$ (resp. $\H_{s_\K}(G_\K)$) as in Definition \ref{def:rank maximal}. Then, for any $h,k \in \K$, we have $h<k \iff (q_\K^X \times q_\K^X)(h) < (q_\K^X \times q_\K^X)(k)$.
\end{rem}

\section{Glueing cubes to a Borel median graph (smooth case)}\label{sec: Adding cubes to a Borel median graph (smooth version)}

In this section, we continue the study a countable Borel median graph $G$ on a standard Borel space $X$ such that $E_G^X$ is smooth. We will introduce a Borel extended metric space $C(G)$, which is larger than $\widetilde X$ (see Definition \ref{def:widetilde X}) and play an important role in the Borel version of the Wright's construction. By embedding $\widetilde X$ into $C(G)$, we will get another description of the Borel structure on $\widetilde X$.

\begin{defn}\label{def:C(G)}
    Let $X$ be a standard Borel space and $G \subset X^2$ be a countable Borel median graph such that $E_G^X$ is smooth. Let $s\colon X\to X$ be a Borel selector for $E_G^X$ and $\H_s(G)$ be a space of hyperplanes of $G$ for $s$. Given $\xi \in [0,1]^{\H_s(G)}$, we define $\supp(\xi)$ by
    \begin{align*}
        \supp(\xi) = \{h \in \H_s(G) \mid \xi(h) \neq 0\}.
    \end{align*}
    Define $C(G) \subset X/E_G^X \times [0,1]^{\H_s(G)}$ by (see Lemma \ref{lem:notions on Borel median graph are Borel} (1) for $q_G^{\H_s(G)}$)
    \begin{align*}
        C(G) = \big\{(a,\xi) \in X/E_G^X \times [0,1]^{\H_s(G)} \,\big|\, \#\supp(\xi)< \infty {\rm~and~} q_G^{\H_s(G)}(\supp(\xi))\subset\{a\}\big\}.
    \end{align*}
    Define $d \colon C(G)^2 \to [0,\infty]$ by
    \begin{align*}
        d((a,\xi), (b,\zeta))=
        \begin{cases}
            \sum_{h \in \H_s(G)}|\xi(h)-\zeta(h)| & {\rm if~} a=b\\
            \infty & {\rm if~} a \neq b.
        \end{cases}
    \end{align*}
    Given Borel transversals $\mathcal{A}=(A_n)_{n \in \NN}$ for $E_G^{\H_s(G)}$ satisfying $\H_s(G) = \bigcup_{n \in \NN}A_n$, define a map $\psi_\mathcal{A}\colon C(G) \to X/E_G^X \times [0,1]^\NN$ by
    \begin{align}\label{eq:C(H_G)}
        \psi_\mathcal{A}(a, \xi) = \big(a, \{\xi(A_n \cap (q_G^{X})^{-1}(a))\}_{n \in \NN}\big).
    \end{align}
     Since the map $\psi_\mathcal{A}$ is injective by $\H_s(G) = \bigcup_{n \in \NN}A_n$, we define a $\sigma$-algebra $\sigma_\mathcal{A}$ on $C(G)$ by the restriction of the $\sigma$-algebra on $X/E_G^X \times [0,1]^\NN$ to $\psi_\mathcal{A}(C(G))$. Define $\iota \colon X \to C(G)$ by
    \begin{align}
        \iota(x)&= (q_G^X(x), 1_{\H_s(G)(s(x),x)})
    \end{align}
    (see Definition \ref{def:notions on H_s(G)} for $\H_s(G)(s(x),x)$). The map $\iota$ extends to the map $\widetilde\iota \colon \widetilde X \to C(G)$ affinely.
\end{defn}

\begin{rem}\label{rem:A for psi_A exists}
    The definition \eqref{eq:C(H_G)} is well-defined since $A_n \cap (q_G^X)^{-1}(a)$ is a singleton for any $n \in \NN$. Also, the sequence $\mathcal{A}$ of Borel transversals exists by Lemma \ref{lem:notions on Borel median graph are Borel} (1) and Lemma \ref{lem:seq of transversal}.
\end{rem}

\begin{rem}
    In other words, $C(G)$ and $\iota$ are obtained by applying the construction in Definition \ref{def:C(X)} to all $G$-components with base points belonging to $s(X)$. Readers are referred to the proof of Proposition \ref{lem: C(H_G) is sBc} (b) for the details on how to extend $\iota \colon X \to C(G)$ to $\widetilde X$ affinely.
\end{rem}

In Lemma \ref{lem:sbs of C(H_G) independent} below, we verify that the $\sigma$-algebra $\sigma_\mathcal{A}$ is canonical.

\begin{lem}\label{lem:sbs of C(H_G) independent}
    Let $X$ be a standard Borel space and $G \subset X^2$ be a countable Borel median graph such that $E_G^X$ is smooth. Let $s\colon X\to X$ be a Borel selector for $E_G^X$ and $\H_s(G)$ be a space of hyperplanes of $G$ for $s$. Then, for any sequences of Borel transversals $\mathcal{A}=(A_n)_{n \in \NN}$ and $\mathcal{A'}=(A'_n)_{n \in \NN}$ for $E_G^{\H_s(G)}$ satisfying $\H_s(G) = \bigcup_{n \in \NN}A_n = \bigcup_{n \in \NN}A'_n$, we have $\sigma_{\mathcal{A}} = \sigma_{\mathcal{A'}}$.
\end{lem}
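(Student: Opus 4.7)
The plan is to prove $\sigma_\mathcal{A} = \sigma_\mathcal{A'}$ by showing that the bijection $\psi_\mathcal{A'} \circ \psi_\mathcal{A}^{-1} \colon \psi_\mathcal{A}(C(G)) \to \psi_\mathcal{A'}(C(G))$ and its inverse are Borel; by the way $\sigma_\mathcal{A}$ and $\sigma_\mathcal{A'}$ are defined via pullback, this forces the two $\sigma$-algebras on $C(G)$ to coincide. By symmetry it is enough to verify one direction, and for a map into $X/E_G^X \times [0,1]^\NN$ it suffices to check that each coordinate is Borel. The first coordinate is the identity on $X/E_G^X$, so the work lies in the countably many $[0,1]$-valued coordinates.

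The main technical input is that for every $(n,m) \in \NN^2$, the set
\[
    B_{n,m} := q_G^{\H_s(G)}(A_n \cap A'_m) \subset X/E_G^X
\]
is Borel. Indeed, since $A_n$ is a transversal for $E_G^{\H_s(G)}$, the restriction of the Borel map $q_G^{\H_s(G)}$ to the Borel set $A_n \cap A'_m$ is injective. Applying Theorem \ref{thm:countable to one} to the Borel set $\{(h,a) \in (A_n \cap A'_m) \times X/E_G^X : q_G^{\H_s(G)}(h) = a\}$, whose fibres over $X/E_G^X$ are singletons (hence $K_\sigma$), gives Borel measurability of $B_{n,m}$ as the projection to $X/E_G^X$.

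Given this, the $m$-th coordinate of $\psi_\mathcal{A'} \circ \psi_\mathcal{A}^{-1}$ is easy to describe. For $(a,\xi) \in C(G)$ with $a \in B_{n,m}$, the unique hyperplane of $A_n$ in the $E_G^{\H_s(G)}$-class associated to $a$ coincides with the unique hyperplane of $A'_m$ in that same class, so
\[
    \big(\psi_\mathcal{A'}(a,\xi)\big)_m = \big(\psi_\mathcal{A}(a,\xi)\big)_n.
\]
If instead $a \notin \bigcup_n B_{n,m} = q_G^{\H_s(G)}(A'_m)$, then $A'_m$ contains no representative of the class of $a$, and the $m$-th coordinate of $\psi_\mathcal{A'}(a,\xi)$ is $0$ by convention. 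Partitioning $\bigcup_n B_{n,m}$ into the disjoint Borel pieces $B_{n,m} \setminus \bigcup_{k < n} B_{k,m}$ expresses the $m$-th coordinate as a countable piecewise Borel function of $\psi_\mathcal{A}(a,\xi)$, hence it is Borel.

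Therefore $\psi_\mathcal{A'} \circ \psi_\mathcal{A}^{-1}$ is Borel; the symmetric argument with the roles of $\mathcal{A}$ and $\mathcal{A'}$ interchanged handles the inverse, and yields a Borel isomorphism between $\psi_\mathcal{A}(C(G))$ and $\psi_\mathcal{A'}(C(G))$, from which $\sigma_\mathcal{A} = \sigma_\mathcal{A'}$ follows immediately. The only substantive step is the Borel measurability of $B_{n,m}$ via Arsenin--Kunugui, which in turn hinges on the injectivity of $q_G^{\H_s(G)}$ on a single transversal; the rest is a routine coordinate-by-coordinate assembly.
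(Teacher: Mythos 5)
Your proof is correct, and it follows the same overall strategy as the paper (show that the transition map $\psi_{\mathcal{A}'}\circ\psi_{\mathcal{A}}^{-1}$ and its inverse are Borel, then transport the $\sigma$-algebras), but the way you establish measurability is genuinely different. The paper writes down the graph $B=\{(\psi_{\mathcal{A}}(\eta),\psi_{\mathcal{A}'}(\eta))\mid\eta\in C(G)\}$ as a subset of $\big(X/E_G^X\times[0,1]^{\NN}\big)^2$, verifies that membership in $B$ is expressible by a formula with countable existential quantifiers over hyperplanes $h_1,\dots,h_k$ and values $t_1,\dots,t_k$, and then invokes Theorem \ref{thm:countable to one} twice: once to see $B$ is Borel and once on the coordinate projections of $B$. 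You instead compute the transition map coordinate by coordinate: the key observation that the $m$-th output coordinate equals the $n$-th input coordinate exactly on $B_{n,m}=q_G^{\H_s(G)}(A_n\cap A'_m)$ (and is $0$ off $\bigcup_n B_{n,m}$, which by $\H_s(G)=\bigcup_n A_n$ and transversality is precisely the set of components with no hyperplane) reduces everything to the single Lusin--Novikov-type fact that $B_{n,m}$ is Borel, which you correctly get from injectivity of $q_G^{\H_s(G)}$ on a transversal plus Theorem \ref{thm:countable to one}. Your route is more elementary and arguably more transparent: it produces an explicit Borel formula for each coordinate (piecewise constant reshuffling of input coordinates over the Borel partition $B_{n,m}\setminus\bigcup_{k<n}B_{k,m}$) rather than passing through a graph-is-Borel argument, and it isolates exactly where the hypothesis $\H_s(G)=\bigcup_n A_n=\bigcup_n A'_n$ is used. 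What it gives up is reusability: the paper's graph-based template is recycled almost verbatim for the harder measurability claims in Lemma \ref{lem: C(H_G) is sBc} and Lemma \ref{lem:demormed quotient tildePsi is Borel}, where no coordinatewise description is available.
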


\begin{proof}
     Let $\psi_\mathcal{A}, \psi_\mathcal{A'}\colon C(G) \to X/E_G^X \times [0,1]^\NN$ be the maps defined in \eqref{eq:C(H_G)}. It's enough to show that $\psi_\mathcal{A'}\circ\psi_\mathcal{A}^{-1}\colon \psi_\mathcal{A}(C(G)) \to \psi_\mathcal{A'}(C(G))$ is isomorphic. Define $B \subset \big(X/E_G^X \times [0,1]^\NN\big)^2$ by $B = \{(\psi_\mathcal{A}(\eta), \psi_\mathcal{A'}(\eta)) \mid \eta \in C(G)\}$, then for any point $((a,\{b_n\}_{n \in \NN}),(a',\{b'_n\}_{n \in \NN}))$ in $\big(X/E_G^X \times [0,1]^\NN\big)^2$, we have
    \begin{align*}
        &((a,\{b_n\}_{n \in \NN}),(a',\{b'_n\}_{n \in \NN})) \in B \iff\\
        & a=a' \,\wedge\, \Big[(\forall\, n\in\NN,\,b_n=b'_n=0) \,\vee\, \exists\, k \in \NN,\,\exists\, h_1,\cdots,h_k\in \H_s(G),\, \exists\,t_1,\cdots,t_k \in (0,1],\\
        &~~~~~~~~~~\wedge_{i=1}^k\,q_G^{\H_s(G)}(h_i)=a \,\wedge\, \wedge_{i \neq j} \, h_i \neq h_j\\
        &~~~~~~~~~~\wedge~\Big(\forall\,n \in\NN,\, \wedge_{i=1}^k\,(h_i \in A_n \Rightarrow \,b_n = t_i) \,\wedge\, \big((\forall\,i,\,h_i \notin A_n) \Rightarrow \,b_n = 0\big) \Big)\\
        &~~~~~~~~~~\wedge~\Big(\forall\,n \in\NN,\, \wedge_{i=1}^k\,(h_i \in A'_n \Rightarrow \,b'_n = t_i) \,\wedge\, \big((\forall\,i,\,h_i \notin A'_n) \Rightarrow \,b'_n = 0\big) \Big)\Big].
    \end{align*}
    Hence, we can see that $B$ is Borel in $\big(X/E_G^X \times [0,1]^\NN\big)^2$ by countability of $E_G^X$ and Theorem \ref{thm:countable to one}. Here, we also used $\{t_i\}_{i=1}^k \subset \{b_n\}_{n \in \NN}$, which follows by $\H_s(G) = \bigcup_{n \in \NN}A_n$. By applying Theorem \ref{thm:countable to one} to the projections from $B$ to each coordinate, we can see that both $\psi_\mathcal{A'}\circ\psi_\mathcal{A}^{-1}$ and $\psi_\mathcal{A}\circ\psi_\mathcal{A'}^{-1}$ are measurable.
\end{proof}

\begin{rem}
By Lemma \ref{lem:sbs of C(H_G) independent}, the $\sigma$-algebra $\sigma_\mathcal{A}$ in Definition \ref{def:C(G)} is independent of the choice of Borel transversals $\mathcal{A}$. Hence, we will denote $\sigma_\mathcal{A}$ by $\sigma_{C(G)}$. 
\end{rem}

Lemma \ref{lem: C(H_G) is sBc} below provides another way to define $\sigma_{\widetilde X}$ (see Remark \ref{rem:sigma_X}).

\begin{lem}\label{lem: C(H_G) is sBc}
Let $X$ be a standard Borel space and $G \subset X^2$ be a countable Borel median graph such that $E_G^X$ is smooth. Let $s\colon X\to X$ be a Borel selector for $E_G^X$ and $\H_s(G)$ be a space of hyperplanes of $G$ for $s$. Then, the following hold.
\begin{itemize}
    \item[(1)]
    $(C(G), \sigma_{C(G)})$ is a standard Borel space.
    \item[(2)]
    The map $\widetilde\iota \colon \widetilde X \to \widetilde\iota (\widetilde X)$ is Borel isomorphic.
    \item[(3)]
    The map $d \colon C(G)^2 \to [0,\infty]$ is Borel.
\end{itemize}
\end{lem}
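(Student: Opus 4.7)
Throughout, let $\mathcal{A}=(A_n)_{n\in\NN}$ be a sequence of Borel transversals for $E_G^{\H_s(G)}$ with $\H_s(G)=\bigcup_n A_n$, which exists by Lemma~\ref{lem:notions on Borel median graph are Borel}(1) and Lemma~\ref{lem:seq of transversal}. Since $E_G^X$ is smooth, $X/E_G^X$ is standard Borel (Remark~\ref{rem:quotient is std}), so $X/E_G^X\times[0,1]^\NN$ is standard Borel; by definition $\sigma_{C(G)}$ is the pullback of its Borel structure along the injection $\psi_\mathcal{A}$. The plan is to analyze everything through this injection.

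For (1), the task reduces to showing that $\psi_\mathcal{A}(C(G))$ is a Borel subset of $X/E_G^X\times[0,1]^\NN$, for then $C(G)$ is Borel isomorphic to a Borel subset of a standard Borel space, hence standard Borel. A pair $(a,(b_n)_n)$ lies in the image iff only finitely many $b_n$ are nonzero, and for all $n,m$ with $A_n\cap (q_G^X)^{-1}(a)=A_m\cap(q_G^X)^{-1}(a)$ we have $b_n=b_m$. The first clause is patently Borel; the second is a countable conjunction of Borel conditions of the form displayed in the proof of Lemma~\ref{lem:sbs of C(H_G) independent} (applied with $\mathcal{A}=\mathcal{A}'$), each verified using countability of $E_G^X$ and Theorem~\ref{thm:countable to one}.

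For (2), I would refine $(A_n)$ to a Borel partition $\H_s(G)=\bigsqcup_n B_n$ into partial transversals (Lemma~\ref{lem:seq of transversal}); the corresponding injection $\psi_\mathcal{B}$ generates the same $\sigma$-algebra by a repeat of Lemma~\ref{lem:sbs of C(H_G) independent}, and it has the advantage that each hyperplane is indexed by a unique $n$. Borel measurability of $\widetilde\iota$ and its inverse can then be checked on each piece of the Borel partition $\widetilde X=\bigsqcup_n F_\mathcal{P}(P_n\times(0,1)^n)$. On the interior of the $n$-cube $F_{\vec x}([0,1]^n)$, the $n$ dual hyperplanes $h_1(\vec x),\ldots,h_n(\vec x)\in \H_s(G)$ depend in a Borel way on $\vec x\in P_n$, and the affine extension is
\[
\widetilde\iota(F_{\vec x}(\vec t))=\Bigl(q_G^X(x_{\vec 0}),\;1_{\H_s(G)(s(x_{\vec 0}),x_{\vec 0})}+\sum_{i=1}^n \epsilon_i(\vec x)\,t_i\,1_{\{h_i(\vec x)\}}\Bigr),
\]
where $\epsilon_i(\vec x)\in\{\pm 1\}$ is the Borel sign recording whether $h_i(\vec x)$ already separates $s(x_{\vec 0})$ from $x_{\vec 0}$. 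Pushing this through $\psi_\mathcal{B}$ yields a Borel formula in $(\vec x,\vec t)$. For the inverse, a point of $\widetilde\iota(\widetilde X)$ is Borel-determined by its $\psi_\mathcal{B}$-image: the set of coordinates on which $\xi$ takes values in $(0,1)$ recovers the dual hyperplanes of the surrounding cube, which together with the values of $\xi$ on $\{0,1\}$-coordinates Borel-recovers $\vec x$ and then $\vec t$ via another application of Theorem~\ref{thm:countable to one}.

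For (3), in the $\psi_\mathcal{B}$-picture each hyperplane is encoded by a unique $n$, so when the first coordinates agree the metric simplifies to the Borel formula $d((a,\xi),(a,\zeta))=\sum_{n\in\NN}|b_n-c_n|$, while on the Borel set $\{a\neq a'\}$ it is identically $\infty$. The step I expect to require the most care is the inverse half of (2): the Borel reconstruction of the cube and its internal coordinates from the $\psi_\mathcal{B}$-data of a point of $\widetilde\iota(\widetilde X)$, since one must coherently read off $\vec x\in P_n$ from the finitely supported $\xi$ while controlling the choice of base vertex $x_{\vec 0}$. Once this bookkeeping is done, (1) and (3) follow by direct computation and routine Arsenin--Kunugui arguments.
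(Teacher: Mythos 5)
Your overall strategy is the same as the paper's: transport everything through the injection $\psi$ into $X/E_G^X\times[0,1]^\NN$, write explicit first-order descriptions of the relevant sets and graphs, and apply Theorem~\ref{thm:countable to one}. Your formula for $\widetilde\iota\circ F_{\mathcal P}$ with signs $\epsilon_i(\vec x)\in\{\pm1\}$ is equivalent to the paper's (which re-bases at the corner $x_\alpha$ nearest $s(x_{\mathbf 0})$ so that all coefficients lie in $[0,1]$), and concluding Borel-isomorphy of $\widetilde\iota$ from Borelness plus injectivity is exactly what the paper does. Your observation that a partition $\bigsqcup_n B_n$ into partial transversals makes the $\ell^1$-sum in (3) literally $\sum_n|b_n-c_n|$ is a mild simplification of the paper's treatment of (3), at the cost of checking $\sigma_{\mathcal B}=\sigma_{C(G)}$ and of adopting a convention for components $a$ with $B_n\cap(q_G^{\H_s(G)})^{-1}(a)=\emptyset$ (for a mere partial transversal this intersection can be empty, so $\psi_{\mathcal B}$ is not well-defined by the formula \eqref{eq:C(H_G)} without setting $b_n=0$ there).

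There is, however, a concrete error in your proof of (1). Your characterization of $\psi_{\mathcal A}(C(G))$ requires that only finitely many $b_n$ be nonzero, but this is not implied by membership in the image. Each $A_n$ is a \emph{full} transversal, so a single hyperplane $h$ in the support of $\xi$ may satisfy $h\in A_n$ for infinitely many $n$ (e.g.\ if the component of $h$ has exactly one hyperplane, then that hyperplane lies in \emph{every} $A_n$, and $\xi=t\cdot 1_{h}$ produces $b_n=t$ for all $n$). Thus the set you describe is a proper Borel subset of the image, and the finiteness clause must instead be ``the set of hyperplanes $\{A_n\cap(q_G^{\H_s(G)})^{-1}(a)\mid b_n\neq 0\}$ is finite,'' which is what the paper's existential formula (finitely many distinct $h_1,\dots,h_k$ with values $t_i$, governing \emph{all} coordinates $n$) encodes. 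This is exactly the multiplicity issue your switch to the partition $\mathcal B$ is designed to avoid in (2) and (3); you simply did not apply it to (1), and as written the argument for (1) proves the wrong set Borel. The repair is routine (either correct the clause, or first establish $\sigma_{\mathcal B}=\sigma_{C(G)}$ and verify (1) in the $\psi_{\mathcal B}$-picture), so the architecture of the proof survives.
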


\begin{proof}
Let $\mathcal{A}=(A_n)_{n \in \NN}$ be Borel transversals for $E_G^{\H_s(G)}$ satisfying $\H_s(G) = \bigcup_{n \in \NN}A_n$ (see Remark \ref{rem:A for psi_A exists}) and let $\psi_\mathcal{A}$ be the map defined in \eqref{eq:C(H_G)}.

(1) By Remark \ref{rem:Borel subset of std}, it's enough to show that $\psi_\mathcal{A}(C(G))$ is Borel.  For any $(a,\{b_n\}_{n \in \NN})$ in $X/E_G^X \times [0,1]^\NN$, we have
\begin{align*}
    &(a,\{b_n\}_{n \in \NN}) \in \psi_\mathcal{A}(C(G)) \\
    &\iff
    (\forall\, n \in \NN,\, b_n = 0) \,\vee\, \exists\, k \in \NN,\,\exists\, h_1,\cdots,h_k\in \H_s(G),\, \exists\,t_1,\cdots,t_k \in (0,1],\\
    &~~~~~~~~~~\wedge_{i=1}^k\,q_G^{\H_s(G)}(h_i)=a \,\wedge\, \wedge_{i \neq j} \, h_i \neq h_j\\
    &~~~~~~~~~~\wedge~\Big(\forall\,n \in\NN,\, \wedge_{i=1}^k\,(h_i \in A_n \Rightarrow \,b_n = t_i) \,\wedge\, \big((\forall\,i,\,h_i \notin A_n) \Rightarrow \,b_n = 0\big) \Big).
\end{align*}
Hence, we can see that $\psi_\mathcal{A}(C(G))$ is Borel by Theorem \ref{thm:countable to one} in the same way as Lemma \ref{lem:sbs of C(H_G) independent}.

(2) As in Definition \ref{def:sigma-algebra on widetilde X}, fix Borel transversals $\mathcal{P} = (P_0)_{n \in \NNo}$ for $E^{C_n}_{\S_n}$ for each $n \in \NNo$ and let $F_\mathcal{P} \colon \bigsqcup_{n \in \NNo}P_n \times (0,1)^n \to \widetilde X$ be the map defined by \eqref{eq:F_mathcal P}. We will show that the map $F'$ defined by $F' = \psi_\mathcal{A}\circ\widetilde\iota \circ F_\mathcal{P}$ is Borel. 

Define $\mathbf{0}=(0,\cdots,0) \in \{0,1\}^n$. Given $\alpha = (\alpha(1),\cdots,\alpha(n)) \in \{0,1\}^n$, define $\alpha_i \in \{0,1\}^n$, for each $i \in \{1,\cdots,n\}$, by $\alpha_i(i)\neq \alpha(i)$ and $\alpha_i(j) = \alpha(j)$ for any $j\neq i$. Also, define $P_{n,\alpha} \subset P_n$ by
\begin{align*}
    P_{n,\alpha} = \{\vec x = (x_\beta)_{\beta \in \{0,1\}^n} \in P_n \mid d_X(s(x_{\mathbf{0}}), x_\alpha) = \min_{\beta \in \{0,1\}^n}d_X(s(x_{\mathbf{0}}), x_\beta) \}.
\end{align*}
We have $P_n = \bigsqcup_{\alpha \in \{0,1\}^n} P_{n,\alpha}$ by Remark \ref{rem:separate convex sets} and each $P_{n,\alpha}$ is Borel. Given a point $(\vec x, \vec t) \in P_{n,\alpha} \times (0,1)^n$, where $\vec t=(t_i)_{i=1}^n$, there exists unique $h_i \in \H_s(G)$ such that $h_i \,E_\square^G \, (x_\alpha, x_{\alpha_i})$ for every $i \in \{1,\cdots,n\}$. We have (see Definition \ref{def:notions on H_s(G)} for $\H_s(G)(s(x_\mathbf{0}),x_\alpha)$)
\begin{align*}
    \widetilde\iota \circ F_\mathcal{P} (\vec x, \vec t) = \Big(q_G^X(x_\mathbf{0}), 1_{\H_s(G)(s(x_\mathbf{0}),x_\alpha)} + \sum_{i = 1}^n u_i1_{h_i}\Big),{\rm ~where~}
    u_i = 
    \begin{cases}
        t_i & {\rm if~} \alpha(i) = 0\\
        1-t_i & {\rm if~} \alpha(i) = 1.
    \end{cases}
\end{align*}
Hence, for any $(\vec x, \vec t,a,\{b_m\}_{m \in \NN}) \in P_{n,\alpha} \times (0,1)^n \times (X/E_G^X) \times [0,1]^\NN$, we have
\begin{align*}
    &F'(\vec x, \vec t) = (a,\{b_m\}_{m \in \NN})\\
    &\iff
    (a,\{b_m\}_{m \in \NN}) \in \psi_\mathcal{A}(C(G)) \,\wedge\, q_G^X(x_\mathbf{0})=a \, \wedge\,\forall\,m \in \NN,\\
    &\big[(\exists\,h \in A_m,\,\wedge_{\beta \in \{0,1\}^n}\,(x_\beta,h) \in \I^+) \Rightarrow b_m = 1\big] \,\wedge\, \big[(\exists\,h \in A_m,\,\wedge_{\beta \in \{0,1\}^n}\,(x_\beta,h) \in \I^-)\Rightarrow b_m = 0\big] \\
    &\wedge \wedge_{i=1}^n \big[ (\exists h \in A_m, h \, E_\square^G \, (x_\alpha,x_{\alpha_i})) \Rightarrow b_m = u_i \big].
\end{align*}
(When $n=0$, the last line ``$\wedge \wedge_{i=1}^n \big( (\exists h \in A_m, h \, E_\square^G \, (x_\alpha,x_{\alpha_i})) \Rightarrow b_m = u_i \big)$" does not exist.) Hence, we can see that the set $\{ ((\vec x, \vec t), F'(\vec x, \vec t)) \mid (\vec x, \vec t) \in P_{n,\alpha} \times (0,1)^n \}$ is Borel by Theorem \ref{thm:countable to one}. Thus, $F'$ is Borel on $P_{n,\alpha} \times (0,1)^n$ for any $n \in \NNo$ and $\alpha \in \{0,1\}^n$ by applying Theorem \ref{thm:countable to one} to projections. Hence, $F'$ is Borel and also injective. This implies the statement by Theorem \ref{thm:countable to one}.

(3) Define $D \subset \psi_\mathcal{A}(C(G))^2 \times [0,\infty]$ by $D = \{(\psi_\mathcal{A}(\eta_1),\psi_\mathcal{A}(\eta_2),d(\eta_1,\eta_2)) \mid \eta_1,\eta_2 \in C(G)\}$. For any $((a,\{b_n\}_{n \in \NN}),(a',\{b'_n\}_{n \in \NN}),u) \in \psi_\mathcal{A}(C(G))^2 \times [0,\infty]$, we have
    \begin{align*}
        &((a,\{b_n\}_{n \in \NN}),(a',\{b'_n\}_{n \in \NN}),u) \in D \iff\\
        & (a\neq a' \,\wedge\, u =\infty) \,\vee\\
        &~~~~~~~~~~ a=a' \,\wedge\, \bigg[ \big((\forall\, n\in\NN,\,b_n=b'_n=0) \,\wedge\, u=0\big) \\
        &~~~~~~~~~~\vee\,  \exists \, k \in \NN,\,\exists\, h_1,\cdots,h_k\in \H_s(G),\, \exists\,t_1,\cdots,t_k, t'_1,\cdots,t'_k \in [0,1],\\
        &~~~~~~~~~~\Big[\wedge_{i=1}^k\,q_G^{\H_s(G)}(h_i)=a \,\wedge\, \wedge_{i \neq j} \, h_i \neq h_j \,\wedge\, \sum_{i=1}^k|t_i-t'_i|=u\\
        &~~~~~~~~~~\wedge~\Big(\forall\,n \in\NN,\, \wedge_{i=1}^k\,(h_i \in A_n \Rightarrow \,b_n = t_i \,\wedge\, b'_n = t'_i) \,\wedge\, \big((\forall\,i,\,h_i \notin A_n) \Rightarrow \,b_n = b'_n= 0\big)\Big)\Big]\bigg]
    \end{align*}
    We can see that $D$ is Borel by Theorem \ref{thm:countable to one}. Hence, $d \colon C(G)^2 \to [0,\infty]$ is Borel.
\end{proof}

\begin{rem}\label{rem:widetilde X identified}
    By Lemma \ref{lem: C(H_G) is sBc} (2), we will often identify $\widetilde X$ and $\widetilde \iota(\widetilde X)$.
\end{rem}

\section{Borel measurability of Wright's construction}\label{sec:Borel measurability of Wright's construction}

In this section, we introduce the Borel version of the Wright's construction and verify Borel measurability of various notions that appear in the construction.

Throughout Section \ref{sec:Borel measurability of Wright's construction}, let $X$ be a standard Borel space and $G \subset X^2$ be a countable Borel median graph such that $E_G^X$ is smooth. Let $s\colon X\to X$ be a Borel selector for $E_G^X$ and $\H_s(G)$ be a space of hyperplanes of $G$ for $s$. Also, suppose that there exists $D \in \NNo$ such that every $G$-component in $X$ is a CAT(0) cube complex of dimension at most $D$. For each $G$-component, fix a unique point $x_0$ in $s(X)$ for Wright's construction (see Section \ref{subsec: Wright's construction}).

Recall that for any $x\in X$, the set $\H_s(G) \cap \big([x]_{E_G^X}\big)^2$ is bijective to the set of all hyperplanes of $[x]_{E_G^X}$. Therefore, we will often identify them in what follows.

\begin{defn}
    Define $\H_s(G)^{cr}, \H_s(G)^<, \H_s(G)^{op} \subset \H_s(G)^2$ by (see Definition \ref{def:basic notations})
    \begin{align*}
        \H_s(G)^{cr} &= \{(h,k) \in \H_s(G)^2 \mid h \,E_G^{\H_s(G)}\, k \,\wedge\, \text{$h$ and $k$ cross}\}, \\
        \H_s(G)^< &= \{(h,k) \in \H_s(G)^2 \mid h \,E_G^{\H_s(G)}\, k \,\wedge\, h < k\}, \\
        \H_s(G)^{op} &= \{(h,k) \in \H_s(G)^2 \mid h \,E_G^{\H_s(G)}\, k \,\wedge\, \text{$h$ and $k$ are opposite}\}.
    \end{align*}    
\end{defn}

\begin{lem}\label{lem:Cr,Pa Borel}
    The sets $\H_s(G)^{cr}$, $\H_s(G)^<$, and $\H_s(G)^{op}$ are Borel.
\end{lem}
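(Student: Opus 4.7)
The plan is to express each of $\H_s(G)^{op}$, $\H_s(G)^<$, and $\H_s(G)^{cr}$ as a Boolean combination of the Borel equivalence relation $E_G^{\H_s(G)}$ and four auxiliary sets encoding non-emptiness of half-space intersections, and then show the latter are Borel by applying Theorem~\ref{thm:countable to one} to the sets $\I^{-}$ and $\I^{+}$ of Lemma~\ref{lem:notions on Borel median graph are Borel}(2).

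First I will invoke the standard combinatorial fact that for distinct hyperplanes $h,k$ lying in a common $G$-component, since the basepoint always lies in $h^-\cap k^-$, exactly one of the four mutually exclusive alternatives of Definition~\ref{def:basic notations} holds, and they are characterized by: $h<k$ iff $h^-\cap k^+=\emptyset$; $h$ and $k$ are opposite iff $h^+\cap k^+=\emptyset$; and $h,k$ cross iff $h^-\cap k^+$, $h^+\cap k^-$, and $h^+\cap k^+$ are all non-empty. Accordingly, for $\e_1,\e_2\in\{-,+\}$ I introduce
\[
A_{\e_1,\e_2}=\{(h,k)\in\H_s(G)^2\mid h^{\e_1}\cap k^{\e_2}\neq\emptyset\}.
\]

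The main step is to show each $A_{\e_1,\e_2}$ is Borel. I will write
\[
A_{\e_1,\e_2}=\mathrm{proj}_{\H_s(G)^2}\bigl\{(x,h,k)\in X\times\H_s(G)^2\mid (x,h)\in\I^{\e_1}\wedge (x,k)\in\I^{\e_2}\bigr\},
\]
noting that the bracketed set is Borel by Lemma~\ref{lem:notions on Borel median graph are Borel}(2). For each fixed $(h,k)\in\H_s(G)^2$ the $X$-section is either empty (when $h,k$ lie in different $G$-components) or contained in a single $G$-component, which is countable by countability of $E_G^X$; in either case the section is countable, hence $K_\sigma$ in any compatible Polish topology on $X$, so Theorem~\ref{thm:countable to one} yields that $A_{\e_1,\e_2}$ is Borel.

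Finally, letting $\Delta\subset\H_s(G)^2$ denote the diagonal, I will conclude using the identities
\[
\H_s(G)^{op}=(E_G^{\H_s(G)}\setminus\Delta)\setminus A_{+,+},\qquad \H_s(G)^<=(E_G^{\H_s(G)}\setminus\Delta)\setminus A_{-,+},
\]
\[
\H_s(G)^{cr}=A_{-,+}\cap A_{+,-}\cap A_{+,+},
\]
together with the fact that $E_G^{\H_s(G)}$ is Borel (Lemma~\ref{lem:notions on Borel median graph are Borel}(1)) and that $\Delta$ is Borel. The only substantive point is the $K_\sigma$-section projection argument; everything else is routine set algebra.
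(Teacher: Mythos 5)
Your proof is correct and rests on the same two pillars as the paper's: Borel-ness of $\I^{-},\I^{+}$ from Lemma~\ref{lem:notions on Borel median graph are Borel}~(2) and the countable-section projection theorem (Theorem~\ref{thm:countable to one}), the sections being countable because $G$-components are. The organization differs in one respect worth noting. You factor everything through the four sector sets $A_{\e_1,\e_2}$ and finish with Boolean algebra, detecting crossing by the wall-theoretic criterion that all four sectors are non-empty; the paper instead detects crossing by projecting the set of squares in $C_2$ witnessing transversality (one pair of opposite edges $E_\square^G$-equivalent to $h$, the other to $k$), and writes $\H_s(G)^<$ and $\H_s(G)^{op}$ directly as first-order conditions over $\I^{-}$ and $\I^{+}$ (e.g.\ $h\neq k$ together with $\forall x\,((x,h)\in\I^-\Rightarrow(x,k)\in\I^-)$ for $h<k$). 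The two readings of ``cross'' agree: both capture transversality, which is what Definition~\ref{def:basic notations} intends and what Wright's construction requires (the literal condition $h^{(0)}\cap k^{(0)}\neq\emptyset$ there is strictly weaker --- consecutive edges of a path satisfy it --- so both your argument and the paper's implicitly use the transversality reading). Your identities for $\H_s(G)^<$ and $\H_s(G)^{op}$ check out: removing the diagonal is necessary for ``$<$'' and harmless for ``opposite'', and omitting $A_{-,-}$ is justified since $s(x)\in h^-\cap k^-$ for all hyperplanes of a component. Neither route buys much over the other; yours is slightly more modular, the paper's slightly more direct.
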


\begin{proof}
    For any $(h,k) \in \H_s(G)^2$, we have
    \begin{align*}
        (h,k) \in \H_s(G)^{cr} &\iff
        \exists\,(x_\alpha)_{\alpha \in \{0,1\}^2} \in C_2,\, (x_{(0,0)},x_{(1,0)}) \,E_\square^G\, h \,\wedge\, (x_{(0,0)},x_{(0,1)}) \,E_\square^G\, k,\\
        (h,k) \in \H_s(G)^< &\iff
        h \neq k \,\wedge\, \forall\, x \in X,\,(x,h) \in \I^- \Rightarrow (x,k) \in \I^-, \\
        (h,k) \in \H_s(G)^{op} &\iff 
        h \,E_G^{\H_s(G)}\, k \,\wedge\, \neg\big[ \exists\,x \in X,\, (x,h) \in \I^+ \,\wedge\, (x,k) \in \I^+\big].
    \end{align*}
    Hence, the statement follows by countability of $E_G^X$ and Theorem \ref{thm:countable to one}.
\end{proof}

\subsection{Controlled colorings}\label{subsec:Controlled colorings Borel ver}

This section corresponds to Section \ref{subsec:Controlled colorings}.

\begin{defn}\label{def:Borel controlled coloring}
    Given $m \in \NNo$, we define $X_m \subset X$ to be the set of all points $x \in X$ such that $[x]_{E_G^X}$ is a CAT(0) cube complex of dimension $m$. Also, we set $Y_m = \H_s(G) \cap (X_m)^2$ and define the \emph{rank vector function $\rank$ on} $Y_m$ by applying the construction in Definition \ref{def:rank vector} to each $G$-component. (Here, $\rank \colon Y_m \to (\NNo)^{m-1}$ if $m \ge 2$ and $\rank \colon Y_m \to \NNo$ if $m \le 1$.) We define the map $c \colon \H_s(G) \to \{0,1\}$ by applying the construction in Definition \ref{def:coloring} to each $G$-component.
\end{defn}

\begin{lem}\label{lem:c is Borel}
    The following hold.
    \begin{itemize}
        \item[(1)]
        For any $m \in \NNo$, the sets $X_m \subset X$ and $Y_m \subset \H_s(G)$ are Borel.
        \item[(2)]
        For any $m \in \NNo$, the rank vector function $\rank$ on $Y_m$ is Borel.
        \item[(3)]
        The map $c \colon \H_s(G) \to \{0,1\}$ is Borel.
    \end{itemize}
\end{lem}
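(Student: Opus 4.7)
For part (1), I would first show that $B_n = \{x \in X : x \text{ lies in some $n$-cube of } \widetilde X\}$ is Borel. Indeed, $B_n = \bigcup_{\alpha \in \{0,1\}^n}\pi_\alpha(C_n)$, where $\pi_\alpha \colon X^{\{0,1\}^n} \to X$ is the coordinate projection and $C_n$ is the Borel set from Definition \ref{def:widetilde X}. Since $G$ is countable, each vertex has a countable link and belongs to only countably many $n$-cubes; hence $\pi_\alpha|_{C_n}$ has countable fibers and $\pi_\alpha(C_n)$ is Borel by Theorem \ref{thm:countable to one}. The $E_G^X$-saturation $\widehat B_n = [B_n]_{E_G^X}$ is Borel by another countable-fiber projection (using countability of $E_G^X$), and then $X_m = \widehat B_m \setminus \widehat B_{m+1}$ is Borel. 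Finally, $Y_m = \H_s(G) \cap (X_m)^2$ is Borel.

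For part (2), I would prove by induction on $n$ that each set $(Y_m)^d_{\ge n}$ from Definition \ref{def:H^d_n} is Borel. The base case follows from (1). For the inductive step, $h \in (Y_m)^d_{\ge n+1}$ iff there exist $h_1,\dots,h_d \in (Y_m)^d_{\ge n}$, pairwise crossing, with $h_i < h$ for each $i$. The relations $<$ and ``cross'' are Borel by Lemma \ref{lem:Cr,Pa Borel}, so the condition inside the existential defines a Borel set. To conclude after projecting onto the $h$-coordinate, I use that $\{k \in \H_s(G) : k < h\}$ is finite for every $h$: picking any $y_+ \in h^+$, one checks that every $k < h$ has $h^+ \subset k^+$, so $k$ separates $x_0 \in k^-$ from $y_+ \in k^+$; thus $\{k : k < h\}$ embeds into the finite set $\H_s(G)(x_0, y_+)$. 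Theorem \ref{thm:countable to one} then gives Borel measurability. Iterating the construction for $d = m, m-1, \dots, 2$ shows that each piece of the partition of $Y_m$ into $\H^{(m,m-1,\dots,2)}_{(n_1,\dots,n_{m-1})}$ is Borel, hence the rank vector function on $Y_m$ is Borel.

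For part (3), I first verify that the predecessor relation $P = \{(k,h) : k \text{ is a predecessor of } h\}$ and the rank-maximal predecessor relation $M \subset P$ are Borel in $\H_s(G)^2$. For $P$, the complement within $\{(k,h) : k < h\}$ is the projection of $\{(k,h,j) : k<j<h\}$ onto the first two coordinates; this projection has finite fibers (again by finiteness of $\{j : j < h\}$), so Theorem \ref{thm:countable to one} gives Borel measurability. For $M$, by Wright's bound cited in Remark \ref{rem:predecessor exists} each $h$ has at most $D$ predecessors, so the universal condition on ranks defining $M$ ranges over a finite set and is Borel via the rank function from (2). Next, I introduce the level function $\ell \colon \H_s(G) \to \NNo$ given by $\ell(h) = d_{\widetilde X}(x_0, h^{(0)})$, which is Borel (by Lemma \ref{lem: C(H_G) is sBc} together with another countable-fiber projection), and which satisfies $\ell(k) < \ell(h)$ whenever $k$ is a predecessor of $h$. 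This enables a Borel recursion along $L_n := \ell^{-1}(n)$: the base case $L_0$ consists of hyperplanes containing $x_0$, where $c = 1$ vacuously (Remark \ref{rem:1st color is 1}). For the inductive step, $h \in L_n \cap c^{-1}(0)$ iff some $k$ with $(k,h) \in M$ satisfies $c(k) = 1$; by induction $c$ is Borel on $L_0 \cup \dots \cup L_{n-1}$, and since $M$ has finite fibers, Theorem \ref{thm:countable to one} shows this projection is Borel. Hence $c$ is Borel on each $L_n$, and thus on all of $\H_s(G) = \bigsqcup_n L_n$.

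The main obstacle is not conceptual but rather the careful bookkeeping required to invoke Theorem \ref{thm:countable to one} (Arsenin-Kunugui) at each stage with the correct geometric input. The three finiteness facts doing all the work are: local countability of the cube complex (countably many $n$-cubes at each vertex), Wright's bound of $D$ on the number of predecessors of any hyperplane, and the inclusion $\{k : k < h\} \subset \H_s(G)(x_0, y_+)$ for $y_+ \in h^+$, which bounds the ``down-set'' of $h$. Once these are in place, every existential quantifier in the definitions of $X_m$, the rank partition, and the coloring $c$ has countable fibers, which is exactly what is needed.
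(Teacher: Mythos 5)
Your proof follows essentially the same strategy as the paper's: express each target set as a projection of a Borel set with countable (in fact $K_\sigma$) fibers and invoke Theorem \ref{thm:countable to one}, with the induction in part (3) running over the distance $r = d_X(x_0, h^{(0)})$ exactly as the paper's sets $Z_r$ do. The variations are cosmetic — you establish finiteness of $\{k : k < h\}$ and of the set of rank-maximal predecessors where the paper relies only on countability of $E_G^X$, and you carry out the recursion in (3) across all of $\H_s(G)$ rather than restricting to each $Y_m$ first — but the underlying argument is the same.
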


\begin{proof}
    (1) Given $m \in \NNo$, for any $x \in X$, we have
    \begin{align*}
        x \in \bigsqcup_{i=m}^\infty X_i \iff &\exists\, (x_\alpha)_{\alpha \in\{0,1\}^m} \in C_m,\, (x,x_{(0,\cdots,0)}) \in E_G^X.
    \end{align*}
    The set $\bigsqcup_{i=m}^\infty X_i$ is Borel by countability of $E_G^X$ and Theorem \ref{thm:countable to one}. Hence, $X_m$ is Borel by $X_m = \bigsqcup_{i=m}^\infty X_i \setminus\bigsqcup_{i=m+1}^\infty X_i$, which implies that $Y_m$ is Borel.

    (2) For $d \in \NN\setminus\{1\}$, $n \in \NNo$, and a Borel subset $\K \subset \H_s(G)$, define $\K^d_{\ge n} \subset \H_s(G)$ by induction on $n$ as follows (see Definition \ref{def:H^d_n});
    \begin{align*}
        \K^d_{\ge 0} &= \K,\\
        \K^d_{\ge n+1} &= \{h \in \K^d_{\ge n} \mid \exists\,h_1,\cdots,h_d \in \K^d_{\ge n},\, \forall\,i\neq j,\,(h_i,h_j) \in \H_s(G)^{cr} \,\wedge\, \forall\,i,\,(h_i,h) \in \H_s(G)^<\}.
    \end{align*}
    We can show that $\K^d_{\ge n}$ is Borel for any $d \ge 2$ and $n \ge 0$ by induction on $n$ and by Theorem \ref{thm:countable to one} and Lemma \ref{lem:Cr,Pa Borel}. Hence, the set $\K^d_n \subset \H_s(G)$ defined by $\K^d_n = \K^d_{\ge n} \setminus \K^d_{\ge n+1}$ is also Borel. 
    
    For $k \in\NN$, $d_1,\cdots,d_k \in \NN\setminus\{1\}$, and $n_1,\cdots,n_k \in \NNo$, define $\H_s(G)^{(d_1,\cdots,d_k)}_{(n_1,\cdots,n_k)} \subset \H_s(G)$ by induction on $k$ as follows (see Definition \ref{def:H^(d_1,cdots,d_k)});
    \begin{align*}
        &\H_s(G)^{(d_1)}_{(n_1)} = \H_s(G)^{d_1}_{n_1},\\
        &\H_s(G)^{(d_1,\cdots,d_{k+1})}_{(n_1,\cdots,n_{k+1})} = \Big(\H_s(G)^{(d_1,\cdots,d_k)}_{(n_1,\cdots,n_k)}\Big)^{d_{k+1}}_{n_{k+1}}.
    \end{align*}
    By induction on $k$, $\H_s(G)^{(d_1,\cdots,d_k)}_{(n_1,\cdots,n_k)}$ is Borel for any $k \in\NN$, $d_1,\cdots,d_k \in \NN\setminus\{1\}$, and $n_1,\cdots,n_k \in \NNo$.
    
    When $m \ge 2$, for any $(n_1,\cdots,n_{m-1}) \in (\NNo)^{m-1}$, we have $(\rank|_{Y_m})^{-1}(n_1,\cdots,n_{m-1}) = Y_m \cap \H_s(G)^{(m,\cdots,2)}_{(n_1,\cdots,n_{m-1})}$. Hence, the map $\rank|_{Y_m}$ is Borel for any $m \ge 2$. When $m \le 1$, the map $\rank|_{Y_m}$ is constant by definition, hence Borel. 

    (3) For $r \in \NNo$, we define $Z_r = \{h = (x,y) \in \H_s(G) \mid d_X(s(x), h^{(0)}) = r\}$. For any $h=(x,y) \in \H_s(G)$, we have by Remark \ref{rem:separate convex sets},
    \begin{align*}
        h \in \bigsqcup_{i=r+1}^\infty Z_i 
        &\iff d_X(s(x), h^{(0)}) \ge r+1 \\
        &\iff \exists\,h_1,\cdots,h_{r+1} \in \H_s(G),\, \wedge_{i\neq j}\,h_i \neq h_j \,\wedge\, \wedge_{i=1}^{r+1}\, (h_i,h) \in \H_s(G)^<.
    \end{align*}
    Hence, $\bigsqcup_{i=r+1}^\infty Z_i$ is Borel for any $r \in \NNo$ by countability of $E_G^X$ and Theorem \ref{thm:countable to one}, hence so is $Z_r$ by $Z_r = \bigsqcup_{i=r}^\infty Z_i \setminus \bigsqcup_{i=r+1}^\infty Z_i$. Note $\H_s(G) = \bigsqcup_{i=0}^\infty Z_i$.
    
    To show that $c$ is Borel, it is enough to show that $c|_{Y_m} \colon Y_m \to \{0,1\}$ is Borel for any $m \in \{0,\cdots,D\}$. We'll show this by showing that $c|_{Y_m \cap Z_r} \colon Y_m\cap Z_r \to \{0,1\}$ is Borel for any $r \in \NNo$ by induction on $r$. The map $c|_{Y_m \cap Z_0}$ is Borel since we have $c|_{Y_m \cap Z_0} \equiv 1$ by Remark \ref{rem:1st color is 1}. 
    
    Next, let $r \in \NN$ and assume that $c|_{Z_{r'}}$ is Borel for any $r'<r$. We define subsets $\pred,\pmax \subset \H_s(G)^2$ by (see Definition \ref{def:rank maximal})
    \begin{align*}
        \pred &= \{(h,k) \in \H_s(G)^< \mid \forall\,k_1 \in \H_s(G), (h,k_1) \notin \H_s(G)^< \,\vee\, (k_1, k) \notin \H_s(G)^<\}, \\
        \pmax &= \{(h,k) \in \pred \mid \forall\, h_1 \in \H_s(G),\, (h_1,k) \in \pred \Rightarrow \rank(h_1) \le_{\mathrm{lex}} \rank(h)\}.
    \end{align*}
    The sets $\pred$ and $\pmax$ are Borel by Theorem \ref{thm:countable to one}, Lemma \ref{lem:Cr,Pa Borel}, and Lemma \ref{lem:c is Borel} (2). Given $(h,k) \in \pmax$, let $v \in X$ satisfy $h^{(0)}\cup k^{(0)} \subset [v]_{E_G^X}$, then we have $d_X(v,h^{(0)}) < d_X(v,k^{(0)})$. Hence, for any $k \in Y_m\cap Z_r$, we have (see Definition \ref{def:coloring})
    \begin{align*}
        k \in (c|_{Y_m\cap Z_r})^{-1}(0) \iff \exists\,h \in \big(c|_{\bigsqcup_{i=0}^{r-1} Y_m \cap Z_i}\big)^{-1}(1),\, (h,k) \in \pmax. 
    \end{align*}
    Note that $c|_{\bigsqcup_{i=0}^{r-1} Y_m \cap Z_i}$ is Borel by our assumption of induction. Hence, $c|_{Y_m\cap Z_r}$ is Borel by Theorem \ref{thm:countable to one} since $E_G^X$ is countable and $\pmax$ is Borel. Since $c|_{Y_m\cap Z_r}$ is Borel for any $r \ge 0$, the map $c|_{Y_m}$ is Borel.
\end{proof}

\subsection{Interpolation of a quotient map}\label{subsec:Interpolation of a quotient map Borel ver}

This section corresponds to Section \ref{subsec:Interpolation of a quotient map}. See Definition \ref{def:Borel controlled coloring} for $c \colon \H_s(G) \to \{0,1\}$.

\begin{defn}\label{def:demormed quotient tildePsi}
    Given $\ell \in \NN$, define $w \colon \H_s(G)^2 \to [0,1]$ by
    \begin{align*}
        w(h,k) =
        \begin{cases}
        \frac{\ell}{\ell+1}    & {\rm if~} h = k\\
        \frac{1}{\ell+1}    & {\rm if~} (h,k) \in \H_s(G)^< \,\wedge\, c(k)=1 \\
        &~~~~\wedge\, \forall\,j \in \H_s(G),\, \big[(h,j) \in \H_s(G)^< \,\wedge\, (j,k) \in \H_s(G)^< \big] \Rightarrow c(j) = 0\\
        0 & {\rm otherwise}.
        \end{cases}
    \end{align*}
    Define $\K_c \subset \H_s(G)$ by $\K_c = c^{-1}(0)$ and let $s_{\K_c}$, $\H_{s_{\K_c}}(G_{\K_c})$, and $\Phi_{\K_c}^X$ be as in Proposition \ref{prop:E_K smooth}. Given $\xi \in C(G)$ (see Definition \ref{def:C(G)}), we define $\Psi_w\xi \in [0,1]^{\K_c}$ by
    \begin{align}\label{eq:tildePsi}
        (\Psi_w\xi)(k) = \min\big\{1, \sum_{h \in \H_s(G)}w(k,h)\xi(h)\big\}.
    \end{align}
    Define $\widetilde\Psi_w \colon C(G) \to C(G_{\K_c})$ by $\widetilde\Psi_w(a,\xi) = \big(\Phi_{\K_c}^X(a), \Psi_w\xi \circ (q_{\K_c}^X \times q_{\K_c}^X)|_{\K_c}^{-1}\big)$.
\end{defn}

\begin{rem}
    The sum $\sum_{h \in \H_s(G)}w(k,h)\xi(h)$ in \eqref{eq:tildePsi} is a finite sum by $\#\supp(\xi)<\infty$. See also Remark \ref{rem:Psi on C(H) well-defined}.
\end{rem}

\begin{lem}\label{lem:demormed quotient tildePsi is Borel}
The following hold.
\begin{itemize}
    \item[(1)]
    The map $w$ is Borel.
    \item[(2)]
    The map $\widetilde\Psi_w \colon C(G) \to C(G_{\K_c})$ is Borel.
\end{itemize}
\end{lem}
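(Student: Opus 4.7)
For part (1), the plan is to verify that each of the three level sets $w^{-1}(\frac{\ell}{\ell+1})$, $w^{-1}(\frac{1}{\ell+1})$, and $w^{-1}(0)$ is Borel in $\H_s(G)^2$. The first is the diagonal, and the second is the intersection of $\H_s(G)^<$ (Borel by Lemma \ref{lem:Cr,Pa Borel}), the set $\{(h,k):c(k)=1\}$ (Borel by Lemma \ref{lem:c is Borel} (3)), and the set $S$ encoding the universal clause $\forall\, j\in\H_s(G),\,[(h,j),(j,k)\in\H_s(G)^<]\Rightarrow c(j)=0$. The complement $S^c$ is the projection onto the first two coordinates of the Borel set $\{(h,k,j)\in\H_s(G)^3:(h,j),(j,k)\in\H_s(G)^<\wedge c(j)=1\}$. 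Any such $j$ lies in the common $E_G^{\H_s(G)}$-class of $h$ and $k$, so the fibers of this projection are countable, and Theorem \ref{thm:countable to one} makes $S^c$, hence $S$, Borel; the third level set is then the complement of the other two.

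For (2), the plan is to reduce the statement to checking that the assignment $\xi\mapsto \Psi_w\xi$ has Borel graph after transferring it through the standard parameterizations of $C(G)$ and $C(G_{\K_c})$. By Proposition \ref{prop:E_K smooth} (4) and (5), the outer factors $\Phi_{\K_c}^X$ and $(q_{\K_c}^X\times q_{\K_c}^X)|_{\K_c}$ are Borel isomorphisms, so they contribute no difficulty. I would fix Borel transversals $\mathcal{A}=(A_n)_{n\in\NN}$ for $E_G^{\H_s(G)}$ with $\H_s(G)=\bigcup_n A_n$ and $\mathcal{B}=(B_m)_{m\in\NN}$ for $E_{G_{\K_c}}^{\H_{s_{\K_c}}(G_{\K_c})}$ with $\H_{s_{\K_c}}(G_{\K_c})=\bigcup_m B_m$, which exist by Lemma \ref{lem:notions on Borel median graph are Borel} (1) and Lemma \ref{lem:seq of transversal}, and let $\psi_\mathcal{A}$, $\psi_\mathcal{B}$ be the associated injections from Definition \ref{def:C(G)}.

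I would then describe the image of the graph of $\widetilde\Psi_w$ under $\psi_\mathcal{A}\times\psi_\mathcal{B}$: a point $\big((a,\{b_n\}),(a',\{b'_m\})\big)$ belongs to it iff $a'=\Phi_{\K_c}^X(a)$ and there exist a finite enumeration $h_1,\ldots,h_p\in\H_s(G)$ with values $t_1,\ldots,t_p\in[0,1]$ encoding $\xi$ through $\mathcal{A}$ in the style of Lemma \ref{lem:sbs of C(H_G) independent}, together with a finite enumeration $k_1,\ldots,k_q\in\K_c$ with values $u_1,\ldots,u_q\in[0,1]$ encoding $\Psi_w\xi\circ(q_{\K_c}^X\times q_{\K_c}^X)|_{\K_c}^{-1}$ through $\mathcal{B}$, subject to the compatibility $u_j=\min\{1,\sum_{i=1}^p w(k_j,h_i)t_i\}$ for every $j$ and to the zero-complement clauses on $b_n$ and $b'_m$ outside these lists. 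Every clause is Borel thanks to (1), Lemma \ref{lem:c is Borel}, Proposition \ref{prop:E_K smooth}, and the transversal-indexed techniques used in Lemma \ref{lem: C(H_G) is sBc}. For fixed $p$ and $q$, the existential projection over the finite enumerations has countable (in fact finite) fibers, so Theorem \ref{thm:countable to one} together with a countable union over $(p,q)$ certifies that the graph is Borel, and hence that $\widetilde\Psi_w$ is Borel.

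The main obstacle is bookkeeping rather than a new idea: mirroring on both sides simultaneously the zero-complement machinery of Lemma \ref{lem:sbs of C(H_G) independent} and Lemma \ref{lem: C(H_G) is sBc}, so that ghost contributions to $\{b_n\}$ and $\{b'_m\}$ outside the enumerations are excluded, while keeping the finitely many existentials over $(h_i,t_i,k_j,u_j)$ inside a projection amenable to Theorem \ref{thm:countable to one}.
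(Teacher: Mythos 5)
Your overall strategy coincides with the paper's: part (1) is handled exactly as in the paper (the three level sets of $w$, with the universal clause over $j$ treated via the complement of a countable-to-one projection and Theorem \ref{thm:countable to one}), and part (2) likewise proceeds by fixing transversal sequences $\mathcal{A}$, $\mathcal{B}$, writing the graph of $\widetilde\Psi_w$ transferred through $\psi_\mathcal{A}$ and $\psi_\mathcal{B}$ as a formula with finitely many existentials over hyperplanes, and invoking Theorem \ref{thm:countable to one}.

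The one place where your formula, as written, would fail is the second enumeration $k_1,\dots,k_q$ on the codomain side. If membership in the graph is asserted via ``there exist $k_1,\dots,k_q$ with $u_j=\min\{1,\sum_{i=1}^p w(k_j,h_i)t_i\}$, and $b'_m=0$ for every $m$ whose representative is unlisted,'' then the formula is satisfied by pairs that are not in the graph: one may choose a list $\{k_j\}$ that omits some $k\in\supp(\Psi_w\xi)$ and falsely declare the corresponding coordinate $b'_m$ to be $0$. Sufficiency of the formula requires the additional (still Borel, but easy to overlook) clause that every unlisted $k\in\K_c$ in the relevant component satisfies $\sum_{i}w(k,h_i)t_i=0$, i.e.\ that the list exhausts $\supp(\Psi_w\xi)$. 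The paper sidesteps this entirely by dispensing with the second enumeration: for every $n$ it takes the unique $h\in(q_{\K_c}^X\times q_{\K_c}^X)|_{\K_c}^{-1}(B_n)$ lying in the component of $a$ and requires $b'_n=\min\{1,\sum_{i}w(h,h_i)t_i\}$ outright, so no coordinate of $\{b'_n\}$ is left to a zero-complement default. With that one correction your argument goes through and is otherwise the paper's proof.
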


\begin{proof}
    (1) This follows since the sets $w^{-1}(\frac{\ell}{\ell+1})$, $w^{-1}(\frac{1}{\ell+1})$, and $w^{-1}(0)$ are Borel by Theorem \ref{thm:countable to one}, Lemma \ref{lem:Cr,Pa Borel}, and Lemma \ref{lem:c is Borel} (3).

    (2) Let $\mathcal{A} = (A_n)_{n\in\NN}$ and $\mathcal{B} = (B_n)_{n\in\NN}$ be Borel transversals for $E_G^{\H_s(G)}$ and $E_{G_{\K_c}}^{\H_{s_{\K_c}}(G_{\K_c})}$, respectively, such that $\H_s(G) = \bigcup_{n \in \NN}A_n$ and $\H_{s_{\K_c}}(G_{\K_c}) = \bigcup_{n \in \NN}B_n$ (see Remark \ref{rem:A for psi_A exists}). Let $\psi_\mathcal{A} \colon C(G) \to X/E_G^X \times [0,1]^\NN$ and $\psi_\mathcal{B} \colon C(G_{\K_c}) \to X_{\K_c}/E_{G_{\K_c}}^{X_{\K_c}} \times [0,1]^\NN$ be the maps defined in Definition \ref{def:C(G)} (see \eqref{eq:C(H_G)}). Define $Graph(\widetilde\Psi_w) \subset X/E_G^X \times [0,1]^\NN \times X_{\K_c}/E_{G_{\K_c}}^{X_{\K_c}} \times [0,1]^\NN$ by
    \begin{align*}
        Graph(\widetilde\Psi_w) = \{(\psi_\mathcal{A}(\eta), \psi_\mathcal{B}\circ\widetilde\Psi_w(\eta)) \mid \eta \in C(G)\}.
    \end{align*}
    For any $((a,\{b_n\}_{n \in \NN}),(a',\{b'_n\}_{n \in \NN})) \in X/E_G^X \times [0,1]^\NN \times X_{\K_c}/E_{G_{\K_c}}^{X_{\K_c}} \times [0,1]^\NN$, we have
    \begin{align*}
        &((a,\{b_n\}_{n \in \NN}),(a',\{b'_n\}_{n \in \NN})) \in Graph(\widetilde\Psi_w) \iff \\
        &\Phi_{\K_c}^X(a)=a' \,\wedge\,\Big[(\forall\, n\in\NN,\,b_n=b'_n=0) \,\vee\, \exists\, k \in \NN,\,\exists\, h_1,\cdots,h_k\in \H_s(G),\, \exists\,t_1,\cdots,t_k \in (0,1],\\
        &~~~~~~~\wedge_{i=1}^k\,q_G^{\H_s(G)}(h_i)=a \,\wedge\, \wedge_{i \neq j} \, h_i \neq h_j\\
        &~~~~~~~\wedge~\Big(\forall\,n \in\NN,\, \wedge_{i=1}^k\,(h_i \in A_n \Rightarrow \,b_n = t_i) \,\wedge\, \big((\forall\,i,\,h_i \notin A_n) \Rightarrow \,b_n = 0\big) \Big)\\
        &~~~~~~~\wedge~\Big(\forall\,n \in\NN,\, \exists\,h \in (q_{\K_c}^X \times q_{\K_c}^X)|_{\K_c}^{-1}(B_n),\, q_G^{\H_s(G)}(h) = a  \,\wedge\, b'_n = \min\big\{1, \sum_{i = 1}^k w(h,h_i)t_i\big\} \Big)\Big].
    \end{align*}
    Hence, $Graph(\widetilde\Psi_w)$ is Borel by countability of $E_G^X$, Theorem \ref{thm:countable to one}, and Lemma \ref{lem:demormed quotient tildePsi is Borel} (1), which implies that $\widetilde\Psi_w$ is Borel by Theorem \ref{thm:countable to one}.
\end{proof}

\subsection{The projection theorem}\label{subsec:The projection theorem Borel ver}

This section corresponds to Section \ref{subsec:The Wright's projection theorem}. The goal of this section is to prove Proposition \ref{prop:projection theorem Borel ver}. In Section \ref{subsec:Controlled colorings Borel ver} and Section \ref{subsec:Interpolation of a quotient map Borel ver}, we applied the Wright's construction to every $G$-component in the same way. However, in this section, we have to apply it differently depending on $G$-components so that it becomes Borel as a whole. More precisely, we have to choose nice total orders in Theorem \ref{thm:supp of p^op, p^<} (c) for each $G$-component to get a Borel projection.

\begin{defn}\label{def:proj thm Borel ver}
    Given $m \in \NNo$, we define $\H_s(G)_m^< \subset \H_s(G)^<$ by
    \begin{align*}
        \H_s(G)_m^< = \{(h,k) \in \H_s(G)^< \mid d_X(h^{(0)},k^{(0)}) = m\}.
    \end{align*}
    Define an equivalence relation $E_G^{\H_s(G)^<}$ on $\H_s(G)^<$ (resp. $E_G^{\H_s(G)_m^<}$ on $\H_s(G)_m^<$ and $E_G^{\H_s(G)^{op}}$ on $\H_s(G)^{op}$) as follows; $(h_1,k_1)$ and $(h_2,k_2)$ in $\H_s(G)^<$ (resp. $\H_s(G)_m^<$ and $\H_s(G)^{op}$) are equivalent if and only if $h_1 \,E_G^{\H_s(G)}\, h_2$. Given a Borel partial transversal $B \subset \H_s(G)^<$ for $E_G^{\H_s(G)^<}$, we define a map $p_B^< \colon C(G) \to C(G)$ by (see Definition \ref{def:p_h,k^<, p_h,k^op} for $p_{h,k}^<$)
    \begin{align*}
        p_B^<(a,\xi) =
        \begin{cases}
           (a, p_{h,k}^<(\xi)) & {\rm if~} \exists\,1\,(h,k) \in B, q_G^{\H_s(G)}(h)=a\\
           (a,\xi) & {\rm otherwise~}.
        \end{cases}
    \end{align*}
    Similarly, given a Borel partial transversal $B \subset \H_s(G)^{op}$ for $E_G^{\H_s(G)^{op}}$, we define a map $p_B^{op} \colon C(G) \to C(G)$ by (see Definition \ref{def:p_h,k^<, p_h,k^op} for $p_{h,k}^{op}$)
    \begin{align*}
        p_B^{op}(a,\xi) =
        \begin{cases}
           (a, p_{h,k}^{op}(\xi)) & {\rm if~} \exists\,1\,(h,k) \in B, q_G^{\H_s(G)}(h)=a\\
           (a,\xi) & {\rm otherwise~}.
        \end{cases}
    \end{align*}
    Given $a \in X/E_G^X$, we define $C_a = \{(x,\xi) \in C(G) \mid x=a \}$, $B_a = \{(x,\xi) \in \widetilde\iota(\widetilde X) \mid x=a \}$, $\H_a^{op} = \{(h,k) \in \H_s(G)^{op} \mid q_G^{\H_s(G)}(h) = a\}$, $\H_a^< = \{(h,k) \in \H_s(G)^< \mid q_G^{\H_s(G)}(h) = a\}$, and $\H_{a,m}^< = \{(h,k) \in \H_s(G)_m^< \mid q_G^{\H_s(G)}(h) = a\}$ where $m \in \NNo$.
\end{defn}

\begin{rem}
    Note $\H_s(G)^< = \bigsqcup_{m=0}^\infty \H_s(G)_m^<$. Also, for any $m \in \NNo$, every Borel partial transversal for $\H_s(G)_m^<$ is a Borel partial transversal for $\H_s(G)^<$.
\end{rem}

\begin{lem}\label{lem:proj thm Borel ver}
The following hold.
\begin{itemize}
    \item[(1)]
    $\H_s(G)_m^<$ is Borel for any $m \in \NNo$.
    \item[(2)] 
    $E_G^{\H_s(G)^<}$, $E_G^{\H_s(G)_m^<}$, and $E_G^{\H_s(G)^{op}}$ are a smooth CBER.
    \item[(2)] 
    For any Borel partial transversal $B$ for $E_G^{\H_s(G)^{op}}$ (resp. $E_G^{\H_s(G)^<}$), the map $p_B^{op}$ (resp. $p_B^<$) is Borel.
\end{itemize}
\end{lem}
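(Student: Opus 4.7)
My plan splits along the three claims. For part (1), I will write $\H_s(G)_m^<$ as the Borel difference $\H_s(G)^<_{\le m}\setminus\H_s(G)^<_{\le m-1}$ (with $\H_s(G)^<_{\le -1}=\emptyset$) and show each $\H_s(G)^<_{\le r}=\{(h,k)\in\H_s(G)^<\mid d_X(h^{(0)},k^{(0)})\le r\}$ is Borel. The condition $x\in h^{(0)}$ is Borel in $(x,h)$ because it unpacks as $\exists y\in X$ with $(x,y)\in G$ and $(x,y)\,E_\square^G\,h$, using that $G$ and $E_\square^G$ are Borel (Lemma \ref{lem:std of hyperplanes}) together with countability of $G$-components. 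Then $(h,k)\in\H_s(G)^<_{\le r}$ says $(h,k)\in\H_s(G)^<$ and $\exists x\in h^{(0)}$, $\exists y\in k^{(0)}$ with $d_G(x,y)\le r$; the witnesses lie in a single countable $G$-component, so projecting them out preserves Borelness via Theorem \ref{thm:countable to one}.

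For part (2), the map $\phi\colon\H_s(G)^<\to X/E_G^X$ sending $(h,k)\mapsto q_G^{\H_s(G)}(h)$ is Borel by Lemma \ref{lem:notions on Borel median graph are Borel} (1), and two pairs are $E_G^{\H_s(G)^<}$-equivalent iff they have the same $\phi$-image, which witnesses smoothness. Each equivalence class sits inside $\H_s(G)\cap([x]_{E_G^X})^2$ for some $x$ and is therefore countable, so the relation is a CBER. The same recipe---restrict $\phi$ to the Borel subset $\H_s(G)_m^<$, or use the analogous map on $\H_s(G)^{op}$---handles the two remaining relations.

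The substantive work is in part (3). Fix a Borel partial transversal $B\subset\H_s(G)^<$ (the $\H_s(G)^{op}$ case is identical after replacing $p^<$ by $p^{op}$). Since $\phi|_B$ is injective and Borel, Theorem \ref{thm:countable to one} gives that $A=\phi(B)$ is Borel and that there is a Borel inverse $\sigma\colon A\to B$. The map $p_B^<$ is the identity on the Borel set $\{(a,\xi)\in C(G)\mid a\notin A\}$, so it suffices to prove Borel measurability on $\{(a,\xi)\mid a\in A\}$. My plan is to fix a sequence $\mathcal{A}=(A_n)_{n\in\NN}$ of Borel transversals for $E_G^{\H_s(G)}$ with $\bigcup_n A_n=\H_s(G)$ (Lemma \ref{lem:seq of transversal}) and verify that the graph of $p_B^<$ is Borel in the canonical coordinates provided by $\psi_\mathcal{A}$. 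Concretely, a pair $((a,\{b_n\}),(a,\{b_n'\}))\in\psi_\mathcal{A}(C(G))^2$ with $a\in A$ belongs to this graph iff there exist $n_h,n_k\in\NN$ and $h,k\in\H_s(G)$ with $(h,k)=\sigma(a)$, $h\in A_{n_h}$, $k\in A_{n_k}$, $(b_{n_h}',b_{n_k}')=p^<(b_{n_h},b_{n_k})$, and $b_n'=b_n$ for every $n\notin\{n_h,n_k\}$. Every ingredient here is Borel, the existential quantifiers over $h,k$ are countable, and those over $n_h,n_k$ are countable, so a further application of Theorem \ref{thm:countable to one} yields Borelness of the graph, hence of $p_B^<$.

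The expected obstacle is precisely the bookkeeping in part (3): translating the pointwise modification of $\xi$ at two hyperplane coordinates into a Borel statement about the sequence $\psi_\mathcal{A}(a,\xi)\in X/E_G^X\times[0,1]^\NN$. Once $\sigma$ and the indices $n_h,n_k$ are isolated, however, the verification is a careful but routine case-analysis in the spirit of the proof of Lemma \ref{lem:demormed quotient tildePsi is Borel} (2).
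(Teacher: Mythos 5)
Your overall plan for all three parts matches the paper's proof: part (1) rewrites the distance condition as a countable quantifier over the $G$-component and applies Theorem~\ref{thm:countable to one}; part (2) uses the map $(h,k)\mapsto q_G^{\H_s(G)}(h)$ as a smoothness witness; and part (3) passes to $\psi_\mathcal{A}$-coordinates and shows the graph of $p_B^<$ (resp.\ $p_B^{op}$) is Borel via Theorem~\ref{thm:countable to one}. The variations are cosmetic: in (1) you use $\le r$ rather than $\ge m$ and express the bound via witnesses $x\in h^{(0)}$, $y\in k^{(0)}$ with $d_G(x,y)\le r$ instead of via intermediate hyperplanes between $h$ and $k$ (the paper's route, using Remark~\ref{rem:separate convex sets}); in (3) you package the quantifier over $B$ as a Borel right inverse $\sigma$, which the paper handles directly as a countable existential.

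There is, however, a concrete error in the formula you write for the graph of $p_B^<$ in part (3), and it is exactly in the ``bookkeeping'' you flagged as the expected obstacle. The sequence $\mathcal{A}=(A_n)_n$ satisfies $\bigcup_n A_n=\H_s(G)$ but the $A_n$'s are not required to be disjoint, so a given hyperplane $h$ is typically contained in many $A_n$'s (e.g.\ if $h$ is the only hyperplane in its $G$-component, then $h\in A_n$ for \emph{every} $n$). Your condition fixes two indices $n_h,n_k$ and then demands $b'_n=b_n$ for all $n\notin\{n_h,n_k\}$. This contradicts consistency at any other index $n'$ with $h\in A_{n'}$ or $k\in A_{n'}$: there $b'_{n'}$ must equal the updated value $p_1^<(b_{n_h},b_{n_k})$ or $p_2^<(b_{n_h},b_{n_k})$, not the old $b_{n'}$. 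The correct formulation, as in Lemma~\ref{lem:demormed quotient tildePsi is Borel}~(2) and in the paper's proof here, is a universal statement over $n$: if $h\in A_n$ then $b'_n=p_1^<(b_{n_h},b_{n_k})$; if $k\in A_n$ then $b'_n=p_2^<(b_{n_h},b_{n_k})$; and if $h\notin A_n$ and $k\notin A_n$ then $b'_n=b_n$. With that correction the rest of your argument goes through unchanged.
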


\begin{proof}
    (1) For any $m \in \NNo$ and $(h,k) \in \H_s(G)^<$, we have by Remark \ref{rem:separate convex sets},
    \begin{align*}
        &(h,k) \in \bigsqcup_{i=m}^\infty \H_s(G)_i^< \\
        &\iff d_X(h^{(0)},k^{(0)}) \ge m \\
        &\iff \exists\,h_0,\cdots ,h_{m+1}\in \H_s(G),\, h_0=h \,\wedge\, h_{m+1}=k \,\wedge\, \wedge_{i=1}^{m+1}\, \{(h,h_i),(h_i,k)\}\subset \H_s(G)^<.
    \end{align*}
    Hence, $\bigsqcup_{i=m}^\infty \H_s(G)_i^<$ is Borel for any $m \in \NNo$ by countability of  $E_G^X$ and Theorem \ref{thm:countable to one}, hence so is $\H_s(G)_m^<$ by $\H_s(G)_m^<= \bigsqcup_{i=m}^\infty \H_s(G)_i^< \setminus \bigsqcup_{i=m+1}^\infty \H_s(G)_i^<$.
    
    (2) Since $E_G^X$ is a CBER, so is $E_G^{\H_s(G)^<}$. Define a map $f \colon \H_s(G)^< \to X/E_G^X$ by $f(h,k) = q_G^{\H_s(G)}(h)$, then $f$ is Borel and we have for any $(h_1,k_1), (h_2,k_2) \in \H_s(G)^<$, $(h_1,k_1) \,E_G^{\H_s(G)^<}\, (h_2,k_2) \iff f(h_1,k_1) = f(h_2,k_2)$. Thus, $E_G^{\H_s(G)^<}$ is smooth. The argument for $E_G^{\H_s(G)_m^<}$ and $E_G^{\H_s(G)^{op}}$ is the same.

    (3) Let $B \subset \H_s(G)^{op}$ be a Borel partial transversal for $E_G^{\H_s(G)^{op}}$. Let $\mathcal{A}=(A_n)_{n \in \NN}$ be Borel transversals for $E_G^{\H_s(G)}$ satisfying $\H_s(G) = \bigcup_{n \in \NN}A_n$ (see Remark \ref{rem:A for psi_A exists}) and let $\psi_\mathcal{A}$ be the map defined in Definition \ref{def:C(G)}. Define $Graph(p_B^{op})$ by
    \begin{align*}
        Graph(p_B^{op})
        =
        \{(\psi_\mathcal{A}(\eta), \psi_\mathcal{A}(p_B^{op}(\eta)) \in (X/E_G^X \times [0,1]^\NN)^2 \mid \eta \in C(G)\}.
    \end{align*}
    For any $((a,\{b_n\}_{n=1}^\infty),(a',\{b'_n\}_{n=1}^\infty)) \in (X/E_G^X \times [0,1]^\NN)^2$, we have
    \begin{align*}
        &((a,\{b_n\}_{n=1}^\infty),(a',\{b'_n\}_{n=1}^\infty)) \in Graph(p_B^{op})\\
        &\iff
         (a,\{b_n\}_{n=1}^\infty) \in \psi_\mathcal{A}(C(G)) \,\wedge\, a=a' \\
         &~~~~~~\wedge\,\bigg[\Big( \exists\,(h,k) \in B, \exists\, s,t \in \NN, q_G^{\H_s(G)}(h) = a \,\wedge\, h \in A_s \,\wedge\, k \in A_t\\
         &~~~~~~~~~~~~~~~~~~~~~~\wedge\, \forall\,n \in \NN,\Big[(h \in A_n \Rightarrow b'_n = p_1^{op}(b_s,b_t)) \,\wedge\, (k \in A_n \Rightarrow b'_n = p_2^{op}(b_s,b_t)) \\
         &~~~~~~~~~~~~~~~~~~~~~~~~~~~~~~~~~~~~~~~~~~~~~~~~~~\wedge\, \big((h \notin A_n \,\wedge\, k \notin A_n) \Rightarrow b'_n = b_n\big) \Big] \Big)\\
         &~~~~~~~~~~~~~\vee\Big(\big(\forall\, (h,k) \in B,\,q_G^{\H_s(G)}(h) \neq a\big) \wedge \big(\forall\, n \in \NN,\,b'_n=b_n\big) \Big)\bigg].
    \end{align*}
    Hence, $Graph(p_B^{op})$ is Borel by Theorem \ref{thm:countable to one}, which implies that $p_B^{op}$ is Borel by Theorem \ref{thm:countable to one}. The argument for $p_B^<$ is the same.
\end{proof}

Proposition \ref{prop:projection theorem Borel ver} is a Borel version of Wright's projection theorem (Theorem \ref{thm:supp of p^op, p^<}).

\begin{prop}\label{prop:projection theorem Borel ver}
    There exists a Borel map $P \colon C(G) \to \widetilde\iota(\widetilde X)\,(\cong \widetilde X)$ such that for every $a \in X/E_G^X$, $P|_{C_a} \colon C_a \to B_a$ is contractive and $P|_{B_a} = \mathrm{id}_{B_a}$.
\end{prop}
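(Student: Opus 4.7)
The plan is to Borelize Wright's projection $P = P_{(\H^<, \prec_2)} \circ P_{(\H^{op}, \prec_1)}$ from Theorem \ref{thm:supp of p^op, p^<}(c) by iteratively applying the Borel ``block'' operations $p_B^{op}$ and $p_B^<$ from Definition \ref{def:proj thm Borel ver} in a way that, on each $G$-component, realizes a valid processing order. Using the smoothness of the CBERs in Lemma \ref{lem:proj thm Borel ver}(2) together with Lemma \ref{lem:seq of transversal}, decompose $\H_s(G)^{op} = \bigsqcup_{n \ge 1} B_n^{op}$ and, for each $m \in \NNo$, $\H_s(G)_m^< = \bigsqcup_{n \ge 1} B_{m,n}^<$ into countable disjoint unions of Borel partial transversals. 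By Lemma \ref{lem:proj thm Borel ver}(3), the resulting single-block maps are Borel.

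Define $P^{op}_N := p_{B_N^{op}}^{op} \circ \cdots \circ p_{B_1^{op}}^{op}$. By Theorem \ref{thm:supp of p^op, p^<}(a), Remark \ref{rem:H^op-supp(xi), H^<-supp(xi) finite}, and the fact that $p^{op}_{h,k}$ acts as the identity whenever $(h,k) \notin \H^{op}\hy\supp$, for every $\eta \in C(G)$ the finite set $\H^{op}\hy\supp(P^{op}_N \eta) \cap \H_{a(\eta)}^{op}$ is monotonically nonincreasing in $N$ and empties at some least $N(\eta)$. This emptiness condition is Borel (it quantifies over a finite witness set), so $N$ is a Borel function, and $P^{op}(\eta) := P^{op}_{N(\eta)}(\eta)$ is Borel because on the Borel piece $\{N = k\}$ it agrees with the Borel map $P^{op}_k$.

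The hard part is $P^<$: Wright's order $\prec_2$ must place larger $d_X(h^{(0)}, k^{(0)})$ first, and no fixed global sequence of $p_{B_{m,n}^<}^<$ operations can respect this across all components whose supports involve unbounded $m$. The remedy is to package the adaptive choice into a single Borel step $\tilde p^<$ as follows. For each $\eta \in C(G)$ with $\H^<\hy\supp(\eta) \cap \H_{a(\eta)}^< \neq \emptyset$, let $M(\eta)$ be the maximum $m$ with $\H_{a(\eta), m}^< \cap \H^<\hy\supp(\eta) \neq \emptyset$, and let $n(\eta)$ be the least $n$ with $B_{M(\eta), n}^< \cap \H_{a(\eta)}^< \cap \H^<\hy\supp(\eta) \neq \emptyset$; set $\tilde p^<(\eta) := p_{B_{M(\eta), n(\eta)}^<}^<(\eta)$, and $\tilde p^<(\eta) := \eta$ otherwise. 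Both $M$ and $n$ are Borel as minimizers of Borel conditions over finite witnesses, so $\tilde p^<$ is Borel on each partition piece $\{(M, n) = (m, n_0)\}$ where it coincides with the Borel map $p_{B_{m, n_0}^<}^<$. By Theorem \ref{thm:supp of p^op, p^<}(b), each iterate of $\tilde p^<$ strictly shrinks $\H_{a(\eta)}^<\hy\supp$ until it vanishes at some least $L(\eta)$, again Borel; set $P^<(\eta) := \tilde p^{<, L(\eta)}(\eta)$, and finally $P := P^< \circ P^{op}$, which is Borel.

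To verify the desired properties, fix $a \in X/E_G^X$ and $\eta \in C_a$. The operation $P^{op}$ applies the pairs of the original $\H_a^{op}\hy\supp(\eta)$ in the total order induced by the transversal indices $n$ (skipped iterations not meeting this support act trivially on component $a$); by Theorem \ref{thm:supp of p^op, p^<}(a), every total order satisfies condition (2) of Lemma \ref{lem:Lemma 3.5 of Wri12}, so this coincides with $P_{(\H_a^{op}, \prec_1)}$ for a suitable $\prec_1$. Similarly, $P^<$ processes max-distance pairs in the current support, which matches the $\prec_2$-least element hypothesis of Lemma \ref{lem:Lemma 3.5 of Wri12} under any $\prec_2$ compatible with decreasing distance, and applying $p_{h,k}^<$ outside the current $\H^<\hy\supp$ is the identity. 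Thus $P|_{C_a}$ agrees with Wright's composite projection on the $G$-component corresponding to $a$, and Theorem \ref{thm:supp of p^op, p^<}(c), combined with the identification in Remark \ref{rem:widetilde X identified}, yields $P(C_a) \subset B_a$, $P|_{B_a} = \mathrm{id}_{B_a}$, and contractivity of $P|_{C_a}$.
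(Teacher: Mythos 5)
Your proposal is correct and follows the same core strategy as the paper: decompose $\H_s(G)^{op}$ and $\H_s(G)_m^<$ into countable unions of Borel partial transversals, use the transversal indices to equip each $G$-component with a Wright-admissible total order satisfying the hypotheses of Lemma \ref{lem:Lemma 3.5 of Wri12}, and establish Borel-ness of the resulting per-component projection via a stabilization argument. Your $P^{op}$ construction matches the paper's essentially verbatim (you track the least $N$ at which the per-component support empties; the paper partitions $C(G)$ into stabilization sets $D_n$, which amounts to the same thing). Your $P^<$ construction is a genuine variant: the paper first Borel-partitions $C(G)$ into sets $E_m$ on which no pair at distance $>m$ is active, then on $E_m$ applies the finite composition $P^<_0\circ\cdots\circ P^<_m$ of per-distance block maps, each of which is Borel by the same stabilization argument; you instead compress the adaptive choice of the max-distance, least-index pair into a single Borel step $\tilde p^<$ (Borel because its domain is partitioned into Borel pieces $\{(M,n)=(m,n_0)\}$ on which it agrees with $p^<_{B_{m,n_0}}$) and iterate until stable. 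Both schemes realize the same per-component order $\prec_a$ and hence the same $P_{(\H_a^<,\prec_a)}$, so both deliver the conclusion via Theorem \ref{thm:supp of p^op, p^<}(c). Your version is more uniform (a single step to iterate) and avoids the nested $E_m$/$P^<_m$ machinery; the paper's version sidesteps the slightly more delicate verification that the adaptive selector $(M,n)$ is Borel. Both are complete proofs.
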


\begin{proof}
   By Lemma \ref{lem:seq of transversal} and Lemma \ref{lem:proj thm Borel ver} (2), there exist Borel partial transversals $(B_n)_{n \in \NN}$ for $E_G^{\H_s(G)^{op}}$ such that $\H_s(G)^{op} = \bigsqcup_{n \in \NN}B_n$. For every $(h,k) \in \H_s(G)^{op}$, there exists a unique $n_{h,k} \in \NN$ such that $(h,k) \in B_{n_{h,k}}$.
    
    Let $a \in X/E_G^X$. Define a total order $\prec_a$ on $\H_a^{op}$ as follows; for $(h_1,k_1), (h_2,k_2) \in \H_a^{op}$,
    \begin{align*}
    (h_1,k_1) \prec_a (h_2,k_2) \iff n_{h_1,k_1} < n_{h_2,k_2}.
    \end{align*}
    The family of maps $\{p_{B_n}^{op}|_{C_a}\}_{n \in (\NN,<)}$ (see Definition \ref{def:proj thm Borel ver}), where $p_{B_n}^{op}|_{C_a} \colon C_a \to C_a$, satisfies the conditions (1) and (2) of Lemma \ref{lem:Lemma 3.5 of Wri12} by applying Remark \ref{rem:H^op-supp(xi), H^<-supp(xi) finite} and Theorem \ref{thm:supp of p^op, p^<} (a) to $\H_a^{op}$. By Lemma \ref{lem:Lemma 3.5 of Wri12}, for any $\eta \in C_a$, there exists $N \in \NN$ such that for any $n \ge N$,
    \begin{align}\label{eq:p_{B_n}^{op}(eta)}
        p_{B_n}^{op}\circ \cdots \circ p_{B_1}^{op}(\eta) = P_{(\H_a^{op}, \prec_a)}(\eta).
    \end{align}
    
    For each $n \in \NN$, define $D_n \subset C(G)$ by
    \begin{align*}
        D_n = \{\eta \in C(G) \mid \forall\, m \ge n,\, p_{B_m}^{op}\circ \cdots \circ p_{B_1}^{op}(\eta) = p_{B_n}^{op}\circ \cdots \circ p_{B_1}^{op}(\eta)\}
    \end{align*}
    Since $\{p_{B_m}^{op}\}_{m \in \NN}$ are Borel by Lemma \ref{lem:proj thm Borel ver} (3), the set $D_n$ is Borel for any $n \in \NN$. By \eqref{eq:p_{B_n}^{op}(eta)}, for any $n \in \NN$ and $\eta = (a,\xi) \in D_n$, we have $P_{(\H_a^{op}, \prec_a)}(\eta) = p_{B_n}^{op}\circ \cdots \circ p_{B_1}^{op}(\eta)$. Hence, the map $P^{op} \colon C(G) \to C(G)$ defined by
    \begin{align*}
        P^{op}(a,\xi) = P_{(\H_a^{op}, \prec_a)}(a,\xi)
    \end{align*}
    is Borel on $D_n$ for any $n \in \NN$. Since we have $C(G) = \bigcup_{n \in \NN}D_n$ by \eqref{eq:p_{B_n}^{op}(eta)}, $P^{op}$ is Borel.
    
    Similarly, by Lemma \ref{lem:seq of transversal} and Lemma \ref{lem:proj thm Borel ver} (2), for every $m \in \NNo$, there exist Borel partial transversals $(B_{m,n})_{n \in \NN}$ for $E_G^{\H_s(G)_m^<}$ such that $\H_s(G)_m^< = \bigsqcup_{n \in \NN}B_{m,n}$. Note $\H_s(G)^< = \bigsqcup_{m \in \NNo} \H_s(G)_m^< = \bigsqcup_{(m,n) \in (\NNo)\times \NN}B_{m,n}$. Hence, for every $(h,k) \in \H_s(G)^<$, there exists a unique $(m_{h,k}, n_{h,k}) \in (\NNo) \times \NN$ such that $(h,k) \in B_{m_{h,k}, n_{h,k}}$.

    Let $a \in X/E_G^X$. Define a total order $\prec_a$ on $\H_a^<$ as follows; for $(h_1,k_1), (h_2,k_2) \in \H_a^<$,
    \begin{align*}
    (h_1,k_1) \prec_a (h_2,k_2) \iff m_{h_1,k_1} > m_{h_2,k_2} \,\vee\, (m_{h_1,k_1} = m_{h_2,k_2} \,\wedge\, n_{h_1,k_1} < n_{h_2,k_2}).
    \end{align*}

    For $m \in \NNo$, define $E_m \subset C(G)$ by
    \begin{align*}
        E_m = \{\eta \in C(G) \mid \forall\,m' > m, \forall\, n \in\NN,\, p^<_{B_{m',n}}(\eta) = \eta\}.
    \end{align*}
    Since $\{p_{B_{m,n}}^<\}_{m \in \NN}$ are Borel maps by Lemma \ref{lem:proj thm Borel ver} (3), the set $E_m$ is Borel for any $m \in \NN$. Define $E_{a,m} \subset C_a$ by $E_{a,m} = \{(x,\xi) \in E_m \mid x=a\}$. For any $\eta \in E_{a,m}$ and $(h,k) \in \H_a^<\hy\supp(\eta)$, we have $d_X(h^{(0)}, k^{(0)}) \le m$. Hence, by Theorem \ref{thm:supp of p^op, p^<} (b), we can see that for any $(h,k) \in \H_{a,m}^<$, we have $p_{h,k}^<(E_{a,m}) \subset E_{a,m}$. Also, the triple $E_{a,m}$, $(\H_{a,m}^<, \prec_a)$, and $\{p_{h,k}^<\}_{(h,k) \in \H_{a,m}^<}$ satisfy the conditions (1) and (2) of Lemma \ref{lem:Lemma 3.5 of Wri12}. By Lemma \ref{lem:Lemma 3.5 of Wri12}, for any $\eta  \in E_{a,m}$, we have $\H_{a,m}^<\hy\supp(P_{(\H_{a,m}^<, \prec_a)}(\eta)) = \emptyset$ and there exists $N \in \NN$ such that for any $n \ge N$,
    \begin{align}\label{eq:p_{B_{m,n}}^<}
        p_{B_{m,n}}^<\circ \cdots \circ p_{B_{m,1}}^<(\eta) = P_{(\H_{a,m}^<, \prec_a)}(\eta)
    \end{align}
    By $\H_{a,m}^<\hy\supp(P_{(\H_{a,m}^<, \prec_a)}(\eta)) = \emptyset$, we have $P_{(\H_{a,m}^<, \prec_a)}(E_{a,m}) \subset E_{a,m-1}$ for any $m \in \NN$. Define a map $P_m^< \colon E_m \to E_{m-1}$ by $P_m^<(a,\xi) = P_{(\H_{a,m}^<, \prec_a)}(a,\xi)$. By \eqref{eq:p_{B_{m,n}}^<}, we can see that $P^<_m$ is Borel in the same way as $P^{op}$. Hence, the map $P_0^<\circ \cdots \circ P_m^< \colon E_m \to E_0$ is Borel for any $m \in \NNo$. Also, for any $m \in \NNo$ and $\eta =(a,\xi) \in E_m$, we have
    \begin{align*}
        P_0^<\circ \cdots \circ P_m^< (\eta) = P_{(\H_a^<, \prec_a)} (\eta).
    \end{align*}
    Thus, the map $P^< \colon C(G) \to C(G)$ defined by $P^<(a,\xi) = P_{(\H_a^<, \prec_a)} (a, \xi)$ is Borel on $E_m$ for any $m \in \NNo$. Since we have $C(G) = \bigcup_{m \in \NNo} E_m$ by Remark \ref{rem:H^op-supp(xi), H^<-supp(xi) finite}, the map $P^<$ is Borel.

    Thus, the map $P \colon C(G) \to C(G)$ defined by $P = P^< \circ P^{op}$ is Borel. By Theorem \ref{thm:supp of p^op, p^<} (c), we have $P(C(G)) \subset \widetilde\iota(\widetilde X)$ and $P|_{C_a} \colon C_a \to B_a$ is contractive and $P|_{B_a} = \mathrm{id}_{B_a}$ for any $a \in X/E_G^X$.
\end{proof}

\section{Proof of Theorem \ref{thm:main}}\label{sec:Proof of the main theorem}

In this section, we prove Proposition \ref{prop:intro smooth median graph} (Proposition \ref{prop:smooth median graph}) and then Theorem \ref{thm:main} (Theorem \ref{thm:main again}). For this, we first prove Lemma \ref{lem:Lipschitz map implies finite Bas}, which is a Borel version and also a variant of a characterization of asymptotic dimension using a map to a simplicial complex.

From Definition \ref{def:triangulation of [-1,1]^n} up to Lemma \ref{lem:stars}, we explain how to triangulate a Borel graph ($\widetilde X$ more precisely) in a Borel way, which will be applied to prove Lemma \ref{lem:Lipschitz map implies finite Bas}. The point is that we triangulate preserving symmetry of cubes to ensure Borel measurability (see Remark \ref{rem:S_ell,n G invariant}).

\begin{defn}\label{def:triangulation of [-1,1]^n}
 Let $n \in \NN$. We triangulate $[-1,1]^n (\subset \RR^n)$ by cutting by the set of $(n-1)$-dimensional hyperplanes $H_1$ in $\RR^n$ defined by $x_i = 0$ for each $i\in \{1,\cdots,n\}$ and $H_2$ defined by $x_i = x_j$ and $x_i = -x_j$ for each distinct $i,j \in \{1,\cdots,n\}$. 
\end{defn}

\begin{rem}
    We can see that this is a triangulation as follows. The set $H_1$ of hyperplanes cuts $[-1,1]^n$ into $2^n$ cubes of side length 1. The cube $[0,1]^n$ is one of them. The set $H_2$ of hyperplanes cuts $[0,1]^n$ into $n$-dimensional simplexes that are defined by
    \begin{align*}
        \{(x_1,\cdots,x_n) \in [0,1]^n \mid x_{\tau(1)} \le \cdots \le x_{\tau(n)}\}
    \end{align*}
    for each permutation $\tau \in Sym(n)$ of coordinates. By symmetry, the other cubes of side length 1 are triangulated by $H_2$ in the same way as $[0,1]^n$.
\end{rem}

The idea of taking the second barycentric subdivision below is from \cite[Theorem 9.9 (d) $\Rightarrow$ (a)]{Roe03}.

\begin{defn}\label{def:triangulation T_2}
 Define a map $f \colon \RR^n \to \RR^n$ by $f(x_1,\cdots,x_n) = \big(\frac{1}{2}(x_1+1),\cdots,\frac{1}{2}(x_n+1)\big)$. By $f([-1,1]^n) = [0,1]^n$, we define the triangulation of $[0,1]^n$ by mapping the triangulation of $[-1,1]^n$ in Definition \ref{def:triangulation of [-1,1]^n} by $f$. Let $T \subset 2^{[0,1]^n}$ be the set of all simplexes in this triangulation of $[0,1]^n$. Here, the number of $n$-simplexes in $T$ is $2^n \cdot \#Sym(n)$. Let $T_1 \subset 2^{[0,1]^n}$ be the set of all simplexes obtained by the barycentric subdivision of $T$. Similarly, let $T_2 \subset 2^{[0,1]^n}$ be the set of all simplexes obtained by the barycentric subdivision of $T_1$. Each simplex in $T_2$ is closed in $[0,1]^n$. For each $0$-simplex $v \in T_1$, define $\st_{T_2}(v) \subset [0,1]^n$ by
 \begin{align*}
     \st_{T_2}(v) = \bigcup\{\sigma \in T_2 \mid v \in \sigma\}.
 \end{align*}
 For each $\ell \in \{0,\cdots,n\}$, let $V_{\ell,n} \subset T_1$ be the set of all $0$-simplexes in $T_1$ that are the barycenter of some $\ell$-simplex in $T$. Define $S_{\ell,n} \subset [0,1]^n$ by 
 \begin{align*}
     S_{\ell,n} = \bigcup_{v \in V_{\ell,n}}\st_{T_2}(v).
 \end{align*}
\end{defn}

\begin{rem}
    We identify simplexes as their geometric realization in $[0,1]^n$.
\end{rem}

\begin{rem}\label{rem:S_ell,n G invariant}
    The set $S_{\ell,n}$ is closed, hence Borel, in $[0,1]^n$. Also, $S_{\ell,n}$ is $\S_n$-invariant.
\end{rem}

\begin{figure}[htbp]

\begin{minipage}[c]{0.5\hsize}
\begin{center}
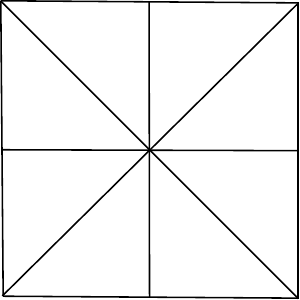
 \caption{Triangulation $T$ of $[0,1]^2$} 
 \label{Fig:T}
\end{center}
\end{minipage} 
\hfill 
\begin{minipage}[c]{0.5\hsize}
\begin{center}
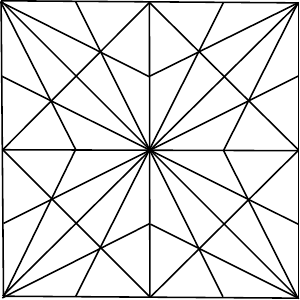
 \caption{Triangulation $T_1$ of $[0,1]^2$}
 \label{Fig:T1}
\end{center}
\end{minipage} 
\end{figure}

\begin{defn}\label{def:triangulate}
Let $X$ be a standard Borel space and $G \subset X^2$ be a countable Borel median graph. We define the triangulation of each $G$-component of $\widetilde X$ by triangulating each $n$-cube in the same way as $T$ for $[0,1]^n$ in Definition \ref{def:triangulation T_2}. Let $\T \subset 2^{\widetilde X}$ be the set of all simplexes of this triangulation of $\widetilde X$. Let $\T_1 \subset 2^{\widetilde X}$ be the set of all simplexes obtained by the barycentric subdivision of $\T$ and let $\T_2 \subset 2^{\widetilde X}$ be the set of all simplexes obtained by the barycentric subdivision of $\T_1$. For each $0$-simplex $v \in \T_1$, define $\st_{\T_2}(v) \subset \widetilde X$ by
\begin{align*}
    \st_{\T_2}(v) = \bigcup\{\sigma \in \T_2 \mid v \in \sigma\}.
\end{align*}
For each $\ell \in \NNo$, let $\V_\ell \subset \T_1$ be the set of all $0$-simplexes in $\T_1$ that are the barycenter of some $\ell$-simplex in $\T$. Define $\SS_\ell \subset \widetilde X$ by
\begin{align*}
    \SS_\ell = \bigcup_{v \in \V_\ell}\st_{\T_2}(v).
\end{align*}
\end{defn}

\begin{figure}[htbp]
    \centering
    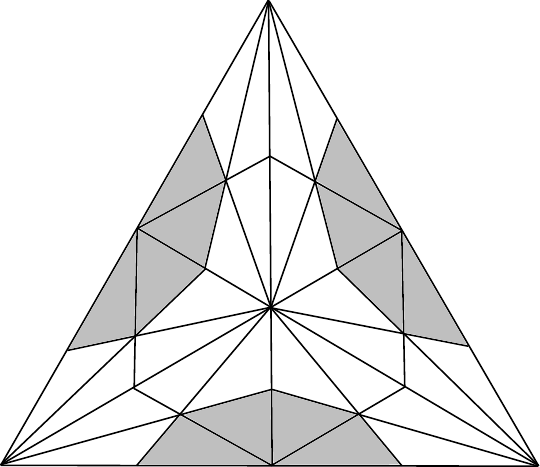
    \caption{The shaded region is the intersection of $\SS_1$ and a 2-simplex in $\T$}
    \label{Fig:subdivision}
\end{figure}

Lemma \ref{lem:stars} (2) below is a known fact because the argument reduces to each connected component, which is a triangulation of a CAT(0) cube complex endowed with the $\ell^1$-metric, but we will write down a sketch of the proof for completeness.

\begin{lem}\label{lem:stars}
Let $X$ be a standard Borel space and $G \subset X^2$ be a countable Borel median graph. Then, the following hold.
\begin{itemize}
    \item[(1)] 
     The set $\SS_\ell$ is Borel for any $\ell \in \NNo$.
    \item[(2)]
     If there exists $D \in \NNo$ such that every $G$-component is a CAT(0) cube complex of dimension at most $D$, then there exists $\delta>0$, depending only on $D$, such that for any $\ell \in \NNo$ and any $v,w \in \V_\ell$ with $v \neq w$, we have $d_{\widetilde X}(\st_{\T_2}(v), \st_{\T_2}(w))\ge \delta$.
\end{itemize}

\end{lem}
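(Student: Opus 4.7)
The triangulation $T$ of $[0,1]^n$ in Definition \ref{def:triangulation T_2} is cut out by the hyperplanes $\{x_i = 1/2\}$, $\{x_i = x_j\}$, and $\{x_i + x_j = 1\}$, which are permuted among themselves under the coordinate action of $\S_n$; hence $T_1$, $T_2$, $V_{\ell,n}$, and $S_{\ell,n}$ are all $\S_n$-invariant (as noted in Remark \ref{rem:S_ell,n G invariant}). Moreover the triangulations of adjacent cubes of $\widetilde X$ agree on any shared face, since the restriction of $T$ of $[0,1]^n$ to any face coincides with the analogous triangulation of that face viewed as $[0,1]^{n-k}$. The key step is to verify the identity
\[
\SS_\ell \cap F_{\vec x}\big((0,1)^n\big) = F_{\vec x}\big(S_{\ell,n} \cap (0,1)^n\big) \qquad \text{for every } n \in \NNo \text{ and } \vec x \in C_n,
\]
using that the $\T_2$-star of any $v \in \V_\ell$ lies in the union of cubes containing $v$ and that its intersection with any such cube is the $F_{\vec x}$-push-forward of the corresponding $T_2$-star in $[0,1]^n$; the right-hand side is independent of the representative $\vec x \in C_n$ by the $\S_n$-invariance of $S_{\ell,n}$. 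Pulled back through the Borel isomorphism $F_\mathcal{P}$ of Definition \ref{def:sigma-algebra on widetilde X},
\[
F_\mathcal{P}^{-1}(\SS_\ell) = \bigsqcup_{n \in \NNo} P_n \times \big(S_{\ell,n} \cap (0,1)^n\big),
\]
which is Borel because each $S_{\ell,n}$ is closed in $[0,1]^n$. Hence $\SS_\ell$ is Borel.

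\textbf{Proof plan for (2).} The starting point is the classical property of second barycentric subdivisions: simplexes of $\T_1$ correspond to chains of strictly nested $\T$-simplexes, so any two vertices of a common $\T_1$-simplex are barycenters of $\T$-simplexes of pairwise distinct dimensions. Thus distinct $v, w \in \V_\ell$ never lie in a common $\T_1$-simplex, so they share no $\T_2$-simplex, and $\overline{\st_{\T_2}(v)} \cap \overline{\st_{\T_2}(w)} = \emptyset$. To turn this combinatorial disjointness into a uniform metric separation, I would work cube-by-cube: for each $n \in \{0,\ldots,D\}$ with $\#V_{\ell,n} \ge 2$, compactness of $[0,1]^n$ and finiteness of the $T_1$-vertex set yield
\[
\delta_n := \min\big\{d_{[0,1]^n}\big(\overline{\st_{T_2}(u)}, \overline{\st_{T_2}(u')}\big) : u, u' \in V_{\ell,n},\, u \neq u'\big\} > 0,
\]
and I set $\delta := \min_{n \le D} \delta_n$, which depends only on $D$.

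The remaining task is to transfer this local bound to $\widetilde X$. Every $\T_2$-simplex containing $v$ sits inside a $\T$-simplex whose closure contains $v$, hence inside a cube containing $v$; this gives the inclusion $\st_{\T_2}(v) \subset \bigcup\{c : v \in c\}$. Thus any $p \in \st_{\T_2}(v)$ lies in some cube $c_p \ni v$ and any $q \in \st_{\T_2}(w)$ in some $c_q \ni w$. If a single cube $c$ contains both $v$ and $w$, then the restrictions of the two stars to $c$ are the $F_{\vec x}$-images of the corresponding $T_2$-stars in $[0,1]^n$, so the local bound $\delta_n$ applies inside $c$ and any $\ell^1$-geodesic between $p$ and $q$ of length $<\delta$ cannot escape $c$ without already traversing the local gap. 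The main obstacle I anticipate is the case where no cube contains both $v$ and $w$: then $c_p$ and $c_q$ may still be close through shared lower-dimensional faces, and I must argue that an $\ell^1$-geodesic between the two stars still realizes a displacement of at least $\delta$. The plan is to combine the above inclusion with the characterization of $\ell^1$-geodesics in CAT(0) cube complexes via hyperplane crossings, showing that any path from $p$ to $q$ must first exit the local $\T_2$-star region around $v$, which by the local bound has width at least $\delta$ within every cube it meets.
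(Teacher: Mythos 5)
Your argument for part (1) is correct and is essentially the paper's proof: both reduce to the $\S_n$-invariance of $S_{\ell,n}$ and the identity $F_\mathcal{P}^{-1}(\SS_\ell)=\bigsqcup_n P_n\times\big(S_{\ell,n}\cap(0,1)^n\big)$.

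Part (2), however, has a genuine gap, and it is exactly the one you flag yourself. The quantity $\delta_n$ you define --- the minimal distance between the $T_2$-stars of two \emph{distinct} vertices of $V_{\ell,n}$ inside a single cube $[0,1]^n$ --- is not the right invariant, and no amount of ``combining with hyperplane crossings'' will make it suffice. When $v$ and $w$ share no cube, a geodesic from $p\in\st_{\T_2}(v)$ to $q\in\st_{\T_2}(w)$ may meet each cube containing $v$ only in an arbitrarily thin slice near a face, so the per-cube contribution is not bounded below by anything resembling $\delta_n$; and $\st_{\T_2}(w)$ may not meet any cube containing $v$ at all, in which case $\delta_n$ says nothing. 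Your closing sentence asserts that the path ``must first exit the local $\T_2$-star region around $v$, which by the local bound has width at least $\delta$'' --- but exiting $\st_{\T_2}(v)$ costs nothing (the starting point may already lie on its boundary), and $\delta_n$ as defined measures a distance between two stars, not the width of any collar around one star.

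The missing idea is an intermediate buffer. The paper introduces the first-subdivision star $\st_{\T_1}(v)=\bigcup\{\sigma\in\T_1\mid v\in\sigma\}$ and proves the uniform estimate $\inf_{v\in\V_\ell}d_{\widetilde X}\big(\st_{\T_2}(v),\partial\st_{\T_1}(v)\big)\ge\delta$ with $\delta$ depending only on $D$, by decomposing a geodesic into line segments each contained in a single $\T_1$-simplex and bounding below the sub-segment that crosses from $\partial\st_{\T_2}(v)$ to $\partial\st_{\T_1}(v)$ within at most two such simplexes. Combined with the combinatorial fact (which you correctly invoke) that $v$ and $w$ span no simplex of $\T_1$, whence $\st_{\T_2}(w)\cap\st_{\T_1}(v)=\emptyset$, every geodesic from $\st_{\T_2}(v)$ to $\st_{\T_2}(w)$ must traverse the collar $\st_{\T_1}(v)\setminus\st_{\T_2}(v)$ and therefore has length at least $\delta$. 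To complete your proof you should replace $\delta_n$ by (a single-cube version of) this collar width and run the crossing argument against $\partial\st_{\T_1}(v)$ rather than against $\st_{\T_2}(w)$.
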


\begin{proof}
    (1) As in Definition \ref{def:sigma-algebra on widetilde X}, fix Borel transversals $\mathcal{P} = (P_0)_{n \in \NNo}$ for $E^{C_n}_{\S_n}$ for each $n \in \NNo$ and let $F_\mathcal{P} \colon \bigsqcup_{n \in \NNo}P_n \times (0,1)^n \to \widetilde X$ be the map defined by \eqref{eq:F_mathcal P}. For any $n \in \NNo$, we have $\big(P_n \times (0,1)^n\big) \cap F_\mathcal{P}^{-1}(\SS_\ell) = P_n \times \big((0,1)^n \cap S_{\ell,m}\big)$ since $S_{\ell,n}$ is $\S_n$-invariant (see Remark \ref{rem:S_ell,n G invariant}). Hence, $\big(P_n \times (0,1)^n\big)\cap F_\mathcal{P}^{-1}(\SS_\ell)$ is Borel. Thus, $\SS_\ell$ is Borel.
    
    (2) Let $\ell \in \NNo$. For $v \in \V_\ell$, define $\st_{\T_1}(v) \subset \widetilde X$ by $\st_{\T_1}(v) = \bigcup\{\sigma \in \T_1 \mid v \in \sigma\}$. For each $i =1,2$, we denote by $\partial \st_{\T_i}(v)$ the boundary of $\st_{\T_i}(v)$.
    
    We claim that there exists $\delta > 0$, depending only on $D$, such that
    \begin{equation}\label{eq:distant from outer sphere}
        \inf_{v \in \V_\ell}d_X(\st_{\T_2}(v), \partial\st_{\T_1}(v)) \ge \delta.
    \end{equation}
    Let $v \in \V_\ell$, $x \in \st_{\T_2}(v)$ and $y \in \partial\st_{\T_1}(v)$. Since every $G$-component is a CAT(0) cube complex and line segments in a cube with its $\ell^1$-metric can be parametrized to be geodesic, we can take a geodesic $\gamma$ from $x$ to $y$ and a sequence of points $x=x_0,\cdots,x_n=y$ on $\gamma$ such that for every $i>0$, the subpath $\gamma_{[x_{i-1}, x_i]}$ of $\gamma$ from $x_{i-1}$ to $x_i$ is a line segment contained in some simplex in $\T_1$. There exist $i,j>0$ with $i \le j$ and points $x' \in \gamma_{[x_{i-1}, x_i]}$ and $y' \in \gamma_{[x_{j-1}, x_j]}$ such that $x' \in \partial\st_{\T_2}(v)$, $y' \in \partial\st_{\T_1}(v)$, and $\gamma_{[x',y']}\setminus\{x'\} \subset \st_{\T_1}(v) \setminus \st_{\T_2}(v)$. 
    
    If $i+2 \le j$, then there exists $\delta_1 > 0$, depending only on $D$, such that $|\gamma_{[x_i, x_{i+1}]}| \ge \delta_1$ since $\gamma_{[x_i, x_{i+1}]}$ is contained in some simplex in $\T_1$ but not in $\st_{\T_2}(v)$. Hence, $d_{\widetilde X}(x,y) = |\gamma| \ge \delta_1$. 
    
    If $i \le j \le i+1$, then there exists $\delta_2 > 0$, depending only on $D$, such that $|\gamma_{[x', y']}| \ge \delta_2$ since $\gamma_{[x', y']}$ goes from $x' \in \partial\st_{\T_2}(v)$ to $y' \in \partial\st_{\T_1}(v)$ within at most 2 simplexes in $\T_1$. Thus, \eqref{eq:distant from outer sphere} follows with $\delta = \min\{\delta_1, 
    \delta_2\}$.

    Let $v,w \in \V_\ell$ with $v \neq w$. Assume that $v$ and $w$ are in the same $G$-component since otherwise we just have $d_{\widetilde X}(\st_{\T_2}(v), \st_{\T_2}(w)) = \infty$. For any $x \in \st_{\T_2}(v)$ and $y \in \st_{\T_2}(w)$, every geodesic from $x$ to $y$ passes through $\partial\st_{\T_1}(v)$ by $y \notin \st_{\T_1}(v)$. This implies $d_{\widetilde X}(x,y) \ge \delta$ by \eqref{eq:distant from outer sphere}. Hence, the statement follows with $\delta$. 
\end{proof}

Now, we are ready to prove Lemma \ref{lem:Lipschitz map implies finite Bas}. We prepare terminologies to state Lemma \ref{lem:Lipschitz map implies finite Bas}.

\begin{defn}
    Let $X$ and $Y$ be extended metric spaces. We call a map $f \colon Y \to X$
    \begin{itemize}
        \item[-]
        \emph{$\e$-Lipschitz}, where $\e \in (0,\infty)$, if $\forall\, x,y \in Y,\,d(f(x),f(y))\le \e \cdot d(x,y)$.
        \item[-]
        \emph{cobornologous} if for any $r \in (0,\infty)$, there exists $R \in (0,\infty)$ such that $$\forall\, x,y \in Y,\,d(f(x),f(y))\le r \Rightarrow d(x,y) \le R.$$
    \end{itemize}
\end{defn}

\begin{lem}\label{lem:Lipschitz map implies finite Bas}
    Let $D \in \NNo$ and let $(Y,\rho)$ be a Borel extended metric space such that $E_\rho$ is countable. Suppose that for any $\e \in (0,\infty)$, there exist a standard Borel space $X$, a countable Borel median graph $G \subset X^2$, and a $\e$-Lipschitz cobornologous Borel map $f \colon Y \to \widetilde X$ such that every $G$-component is a CAT(0) cube complex of dimension at most $D$. Then, $\mathrm{asdim}_{\mathbf{B}}(Y,\rho) \le D$.
\end{lem}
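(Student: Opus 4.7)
The plan is, for each $r>0$, to produce $D+1$ Borel sets $U_0,\dots,U_D$ covering $Y$ whose $\mathfrak{F}_r$-classes are uniformly $\rho$-bounded, which is exactly what Definition~\ref{def:Borel asymptotic dimension} requires. Let $\delta=\delta(D)>0$ be the constant from Lemma~\ref{lem:stars}~(2), and observe that every star $\st_{\T_2}(v)$ has $\ell^1$-diameter at most some $M_0=M_0(D)$: indeed, $\st_{\T_2}(v)$ lies inside the union of the simplices of $\T$ meeting the simplex of $\T$ whose barycenter is $v$, and each such simplex lives inside a single cube of $\widetilde X$ of $\ell^1$-diameter at most $D$, so $M_0=2D$ works even if the ambient complex is not locally finite.

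Given $r>0$, apply the hypothesis with any $\e\in(0,\delta/r)$ to obtain a Borel $\e$-Lipschitz cobornologous map $f\colon Y\to\widetilde X$ whose target arises from a countable Borel median graph $G$ with components of dimension at most $D$, and set $U_\ell:=f^{-1}(\SS_\ell)$ for $\ell=0,1,\dots,D$; these are Borel by Lemma~\ref{lem:stars}~(1). To see $Y=\bigcup_{\ell=0}^D U_\ell$, it suffices to show $\widetilde X=\bigcup_{\ell=0}^D\SS_\ell$: every $p\in\widetilde X$ lies in a simplex $\sigma\in\T_2$, and $\sigma$ is always a face of some simplex of $\T_2$ that contains a vertex of $\T_1$ (if the chain in $\T_1$ defining $\sigma$ does not already start with a $0$-simplex, prepend any vertex of its smallest element); such a vertex of $\T_1$ lies in some $\V_\ell$ with $\ell\le D$, so $p\in\st_{\T_2}(v)\subset\SS_\ell$.

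For the main step, suppose $x,y\in U_\ell$ are joined by a chain $x=x_0,\dots,x_n=y\in U_\ell$ with $\rho(x_{i-1},x_i)\le r$. Then $d_{\widetilde X}(f(x_{i-1}),f(x_i))\le\e r<\delta$ and all $f(x_i)\in\SS_\ell$; by Lemma~\ref{lem:stars}~(2) distinct stars $\st_{\T_2}(v),\st_{\T_2}(w)$ with $v,w\in\V_\ell$ are at distance at least $\delta$, so consecutive images must share a common star, and chaining forces all $f(x_i)$ into one $\st_{\T_2}(v)$. Hence $d_{\widetilde X}(f(x),f(y))\le M_0$, and the cobornologous property of $f$ yields some $R=R(f,M_0)$ with $\rho(x,y)\le R$, giving the required uniform bound on $\mathfrak{F}_r(U_\ell)$ and proving $\mathrm{asdim}_{\mathbf{B}}(Y,\rho)\le D$. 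The only slightly delicate point is the star-diameter estimate $M_0=M_0(D)$ in the non-locally-finite setting, which is precisely the reason for introducing the $\ell^1$-metric on $\widetilde X$; everything else is a direct synthesis of the triangulation from Definition~\ref{def:triangulate} with the Lipschitz and cobornologous structure of $f$.
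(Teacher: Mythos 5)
Your proposal is correct and follows essentially the same route as the paper: pull back the Borel sets $\SS_\ell$ along an $\e$-Lipschitz cobornologous $f$ with $\e$ chosen so that $\e r<\delta$, use Lemma \ref{lem:stars}~(2) to force every $\mathfrak{F}_r(U_\ell)$-class into the preimage of a single star, and use the cobornologous property together with the uniform bound on star diameters to get uniform boundedness. The only difference is cosmetic: you spell out the covering $\widetilde X=\bigcup_{\ell=0}^D\SS_\ell$ and the explicit star-diameter bound $M_0=2D$, which the paper asserts without proof.
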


\begin{proof}
    Let $r > 0$. Let $\delta>0$ be the constant as in Lemma \ref{lem:stars} (2) for $D$. Take $X$, $G$, and $f$ as in the condition for $\e = (r+1)^{-1}\delta$. For $(X,G)$, define $\T$, $\T_1$, $\T_2$, $\st_{\T_2}(v)$, $\V_\ell$, and $\SS_\ell$ as in Definition \ref{def:triangulate}. By Lemma \ref{lem:stars} (1), $\SS_\ell$ is Borel. Define $U_\ell \subset Y$ by $U_\ell = f^{-1}(\SS_\ell)$, then $U_\ell$ is Borel since $f$ is Borel. We have $Y = \bigcup_{\ell = 0}^D U_\ell$ by $\widetilde X = \bigcup_{\ell = 0}^D \SS_\ell$. Note $U_\ell = \bigsqcup_{v \in \V_\ell} f^{-1}(\st_{\T_2}(v))$. For any $\ell \ge 0$ and any $v,w \in \V_\ell$ with $v \neq w$, we have
    \begin{align*}
        \rho(f^{-1}(\st_{\T_2}(v)), f^{-1}(\st_{\T_2}(w))\ge \e^{-1}\delta \ge r+1.
    \end{align*}
    by Lemma \ref{lem:stars} (2) since $f$ is $\e$-Lipschitz. Hence, every $\mathfrak{F}_r(U_\ell)$-component is contained in $f^{-1}(\st_{\T_2}(v))$ for some $v \in \V_\ell$. Also, we have $\sup_{v \in \V_\ell}\diam_Y(f^{-1}(\st_{\T_2}(v)))<\infty$ since we have $\sup_{v \in \V_\ell}\diam_{\widetilde X}(\st_{\T_2}(v)) < \infty$ and the map $f$ is cobornologous ($\diam_Y, \diam_{\widetilde X}$ denote the diameter in $Y, \widetilde X$). Hence, $\mathfrak{F}_r(U_\ell)$ is uniformly bounded for any $\ell \in \{0,\cdots,D\}$. Thus, $\mathrm{asdim}_{\mathbf{B}}(Y,\rho) \le D$.
\end{proof}

The idea of the proof of Proposition \ref{prop:intro smooth median graph} below is that the Wright's construction of a Lipschitz cobornologous has some flexibility, so we can find a nice Wright's construction for each component of a Borel graph so that they become a Borel map as a whole.

\begin{prop}\label{prop:smooth median graph}{\rm (Proposition \ref{prop:intro smooth median graph})}
    Let $X$ be a standard Borel space and $G \subset X^2$ be a countable Borel median graph such that $E_G^X$ is smooth. Suppose that there exists $D \in \NNo$ such that every $G$-component is a CAT(0) cube complex of dimension at most $D$. Then, $\mathrm{asdim}_{\mathbf{B}}(X,G) \le D$.
\end{prop}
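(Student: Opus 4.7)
The approach is to apply Lemma \ref{lem:Lipschitz map implies finite Bas} with $(Y, \rho) = (X, d_G)$. That is, for each $\e > 0$, I will construct a standard Borel space $X'$, a countable Borel median graph $G' \subset (X')^2$ whose components are CAT(0) cube complexes of dimension at most $D$, and a Borel $\e$-Lipschitz cobornologous map $f \colon X \to \widetilde{X'}$; the target $(X', G')$ is allowed to depend on $\e$. The map $f$ will arise by iterating the Borel analog of Wright's construction assembled in Sections \ref{sec:Standard Borel space of hyperplanes}--\ref{sec:Borel measurability of Wright's construction}.

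For one iteration, I form the composition
\[
\Phi \,:=\, P \,\circ\, \widetilde\Psi_w \,\circ\, \widetilde\iota \colon \widetilde X \longrightarrow \widetilde{X_{\K_c}},
\]
where $\widetilde\iota$ is the Borel isomorphism of Lemma \ref{lem: C(H_G) is sBc}(2), $\widetilde\Psi_w$ is the Borel deformed quotient of Lemma \ref{lem:demormed quotient tildePsi is Borel}, and $P$ is the Borel contraction of Proposition \ref{prop:projection theorem Borel ver} applied to $(X_{\K_c}, G_{\K_c})$ (using the identification of Proposition \ref{prop:E_K smooth}(4) to view $\widetilde\iota(\widetilde{X_{\K_c}})$ as $\widetilde{X_{\K_c}}$). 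By Proposition \ref{prop:E_K smooth}, $(X_{\K_c}, G_{\K_c})$ is again a countable Borel median graph on a standard Borel space whose component equivalence relation is smooth, and the Sageev-Roller duality ensures that component dimensions remain bounded by $D$. Restricted to any $G$-component, $\Phi$ coincides with the classical Wright composition; hence, using the controlled-coloring estimate of Theorem \ref{thm:coloring} together with Wright's original analysis, $\Phi|_X \colon X \to \widetilde{X_{\K_c}}$ is Borel, Lipschitz (with a constant depending only on $D$ and the weight parameter $\ell$), and cobornologous (with a modulus depending only on $D$ and $\ell$), uniformly over components.

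To obtain an arbitrarily small Lipschitz constant, iterate the construction: apply the same procedure to $(X_{\K_c}, G_{\K_c})$ and continue inductively. After $n$ iterations (with suitable choices of the weight parameter at each step), one obtains a Borel map $f_n \colon X \to \widetilde{X^{(n)}}$ into another standard Borel median graph whose components are CAT(0) cube complexes of dimension at most $D$. The Lipschitz constants compose multiplicatively, and by choosing the parameters so that each step contributes a factor strictly less than $1$ and then taking $n$ large, $f_n$ can be made $\e$-Lipschitz. Cobornologousness is preserved by composing the individual cobornologous moduli. Applying Lemma \ref{lem:Lipschitz map implies finite Bas} with this family of maps then yields $\mathrm{asdim}_{\mathbf{B}}(X, G) \le D$.

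The main technical obstacle I anticipate is the quantitative bookkeeping through iteration: confirming that the per-step Lipschitz contribution can be arranged to be strictly less than $1$ so that composition drives the Lipschitz constant to zero, and tracking the growth of the cobornologous modulus under iteration. This is essentially a careful translation of Wright's original estimates into each single step of the Borel iteration. In contrast, the Borel-measurability side of the argument is automatic: preservation of smoothness and standardness under iterated Sageev-Roller quotients is guaranteed by Proposition \ref{prop:E_K smooth}, and each of the three factors making up $\Phi$ is Borel by the results of Section \ref{sec:Borel measurability of Wright's construction}.
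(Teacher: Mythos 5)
Your proposal is correct and follows essentially the same route as the paper: one Borel Wright step $P\circ\widetilde\Psi_w\circ\widetilde\iota$, iterated $N$ times until the composed Lipschitz constant drops below $\e$, then Lemma \ref{lem:Lipschitz map implies finite Bas}. The quantitative ``obstacle'' you flag is already settled by \cite[Lemma 4.8]{Wri12}: each step is $\frac{3^{D-1}D}{3^{D-1}D+1}$-Lipschitz with $3^{D-1}D$ coming from Theorem \ref{thm:coloring}, so the per-step factor is automatically a constant strictly less than $1$ depending only on $D$.
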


\begin{proof}
    Let $\e >0$. Let $s\colon X\to X$ be a Borel selector for $E_G^X$ and $\H_s(G)$ be a space of hyperplanes of $G$ for $s$. By Theorem \ref{thm:coloring} and Lemma \ref{lem:c is Borel} (3), there exists a Borel map $c \colon \H_s(G) \to \{0,1\}$ such that the restriction of $c$ to every $G$-component is a $3^{D-1}D$-controlled coloring.
    
    Set $\K_c=c^{-1}(0)$ and let $(X_{\K_c},G_{\K_c})$ be the quotient of $(X,G)$ by $\K_c$ (see Definition \ref{def:notions on H_s(G)}). By Proposition \ref{prop:E_K smooth}, $X_{\K_c}$ is a standard Borel space, $G_{\K_c}$ is a countable Borel median graph, and $E_{G_{\K_c}}^{X_{\K_c}}$ is smooth. Also, every $G_{\K_c}$-component is a countable CAT(0) cube complex of dimension at most $D$.
    
    Define the map $\widetilde\Psi_w \colon C(G) \to C(G_{\K_c})$ as in Definition \ref{def:demormed quotient tildePsi}. By Lemma \ref{lem:demormed quotient tildePsi is Borel}, $\widetilde\Psi_w$ is Borel. By \cite[Lemma 4.8]{Wri12}, the restriction of $\widetilde\Psi_w|_{\widetilde X} \colon \widetilde X \to C(G_{\K_c})$ to every $G$-component is $\frac{3^{D-1}D}{3^{D-1}D+1}$-Lipschitz (see Remark \ref{rem:widetilde X identified}). (By \cite[Lemma 4.7]{Wri12}, the quotient map $\widetilde X \to \widetilde{X_{\K_c}}$ is cobornologous on $X$.)
    
    By Proposition \ref{prop:projection theorem Borel ver}, there exists a Borel contractive map $P \colon C(G_{\K_c}) \to \widetilde{X_{\K_c}}$ such that $P|_{\widetilde{X_{\K_c}}}=\mathrm{id}_{\widetilde{X_{\K_c}}}$ (note Remark \ref{rem:iota is isom}). By arguing in the same way as the proof of \cite[Theorem 4.9]{Wri12} for each $G$-component, we can see that $P \circ \widetilde\Psi_w|_{\widetilde X} \colon \widetilde X \to \widetilde{X_{\K_c}}$ is a Borel map whose restriction to every $G$-component is a $\frac{3^{D-1}D}{3^{D-1}D+1}$-Lipschitz cobornologous map to its quotient. 
    
    Take $N \in \NN$ satisfying $\big(\frac{3^{D-1}D}{3^{D-1}D+1}\big)^N < \e$. By repeatedly taking a quotient in the above way $N$ times, there exist a standard Borel space $Y$, a countable Borel median graph $G'$ on $Y$, and a $\e$-Lipschitz cobornologous Borel map $f \colon X \to \widetilde Y$ such that $E_{G'}^Y$ is smooth and every $G'$-component is a CAT(0) cube complex of dimension at most $D$. By Lemma \ref{lem:Lipschitz map implies finite Bas}, $\mathrm{asdim}_{\mathbf{B}}(X,G) \le D$.
\end{proof}

See Section \ref{sec:CAT(0) cube complex} and Section \ref{sec:Descriptive set theory} for relevant notations.

\begin{thm}\label{thm:main again}{\rm (Theorem \ref{thm:main})}
    For any countable CAT(0) cube complex $X$ of dimension $D \in \NNo$, $G_{\R(X)}$ is a countable Borel graph on $\R(X)$, $E_{G_{\R(X)}}^{\R(X)}$ is smooth, and we have $\mathrm{asdim}_{\mathbf{B}}(\R(X),G_{\R(X)}) \le D$.
\end{thm}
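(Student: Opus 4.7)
The plan is to apply Proposition \ref{prop:intro smooth median graph} to the pair $(\R(X), G_{\R(X)})$ and verify its four hypotheses. Countability of $X$ forces $\H(X)$ to be countable, so $\prod_{h \in \H(X)}\{h^-, h^+\}$ is Polish (homeomorphic to the Cantor space), and by Remark \ref{rem:Roller boundary} the set $\R(X)$ is cut out by the closed conditions $\alpha(h) \cap \alpha(k) \neq \emptyset$, hence is standard Borel. The graph $G_{\R(X)}$ is Borel, being a countable union over $h \in \H(X)$ of Borel ``single-coordinate flip'' sets; its components are countable (each class in bijection with the countable set of finite subsets of $\H(X)$), so $E_{G_{\R(X)}}^{\R(X)}$ is a CBER, and each component is a median graph by Remark \ref{rem:Roller boundary}. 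Moreover, a $k$-cube in any component $C$ of $G_{\R(X)}$ corresponds to $k$ pairwise-crossing internal hyperplanes; by the characterization of $\R(X)$ in Remark \ref{rem:Roller boundary}, crossing in $C$ is equivalent to crossing in $X$, so $\dim X = D$ forces $\dim C \le D$.

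The main substantive step will be smoothness of $E_{G_{\R(X)}}^{\R(X)}$. I plan to exhibit a Borel complete invariant. For $\alpha \in \R(X)$ define
\[
    \H_{\sim}(\alpha) = \{h \in \H(X) \mid \exists\, \beta \in [\alpha]_{E_{G_{\R(X)}}^{\R(X)}},\, \beta(h) \neq \alpha(h)\},
\]
the set of hyperplanes internal to the component of $\alpha$. Since the component of $\alpha$ is countable, $\H_{\sim}$ is Borel by Theorem \ref{thm:countable to one}. The map $\phi \colon \R(X) \to \{h^-, h^+, *\}^{\H(X)}$ defined by $\phi(\alpha)(h) = *$ when $h \in \H_{\sim}(\alpha)$ and $\phi(\alpha)(h) = \alpha(h)$ otherwise is then Borel and, because elements of a common component share both $\H_{\sim}$ and the external orientation $\alpha|_{\H(X)\setminus\H_{\sim}(\alpha)}$, is constant on components. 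Smoothness will follow provided $\phi$ is a complete invariant: if $\phi(\alpha) = \phi(\beta)$, then $\H_{\sim}(\alpha) = \H_{\sim}(\beta) =: \H'$ and $\alpha, \beta$ agree on $\H(X) \setminus \H'$, and I must deduce that $\{h \in \H' \mid \alpha(h) \neq \beta(h)\}$ is finite so that $\alpha$ and $\beta$ lie in the same $G_{\R(X)}$-component.

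The hard part will be this completeness. My strategy is the following: if the disagreement set $S \subseteq \H'$ were infinite, then by $\dim X \le D$ and a Ramsey argument $S$ would contain an infinite pairwise non-crossing sub-family $h_1, h_2, \ldots$; the consistency constraint defining $\R(X)$ would force the halfspaces $\{\alpha(h_i)\}$ along this sub-family to form an infinite strictly nested chain, while $\{\beta(h_i)\}$ would give the complementary nested chain. Since each $h_i$ lies in $\H'$ it admits a single-flip neighbor in the component of $\alpha$ (and of $\beta$), and iterating median operations between $\alpha$, $\beta$, and these neighbors -- noting that medians of elements of $\R(X)$ remain in $\R(X)$ by a pigeonhole argument on pairs of hyperplanes -- should produce an element violating either the consistency along the nested chain or the finiteness of disagreement with $\alpha$, yielding the desired contradiction. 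Granting this completeness, $\phi$ is a Borel reduction whose fibers are exactly the $G_{\R(X)}$-components, so $E_{G_{\R(X)}}^{\R(X)}$ is smooth. All four hypotheses of Proposition \ref{prop:intro smooth median graph} being verified, Theorem \ref{thm:main again} follows.
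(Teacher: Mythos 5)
Your overall strategy is sound and genuinely differs from the paper's. Where the paper invokes Genevois's gate-projection result \cite[Proposition 2.9]{Gen21} to obtain a canonical representative $\xi_x$ of each $E_{G_{\R(X)}}^{\R(X)}$-class and then verifies Borel measurability of $x\mapsto\xi_x$ via \eqref{eq:smooth 1}, you build a Borel complete invariant $\phi$ directly out of the internal hyperplanes $\H_\sim(\alpha)$ together with the external orientation $\alpha|_{\H(X)\setminus\H_\sim(\alpha)}$. In fact the two encode exactly the same data: unpacking \eqref{eq:smooth 1} shows that $\xi_x$ is a function of $\H_\sim(x)$ and $x|_{\H(X)\setminus\H_\sim(x)}$ alone, so $\phi(\alpha)=\phi(\beta)\iff\xi_\alpha=\xi_\beta$, and completeness of $\phi$ is equivalent to the uniqueness half of Genevois's result. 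So your route, if the completeness claim is proved directly, gives a self-contained alternative to citing \cite{Gen21}.

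Your argument for that completeness, however, contains two gaps. First, $h_i\in\H_\sim(\alpha)$ only asserts that \emph{some} $\gamma\in[\alpha]_{E}$ has $\gamma(h_i)\neq\alpha(h_i)$; it does \emph{not} give a ``single-flip neighbor'' of $\alpha$ across $h_i$, since the single coordinate flip of $\alpha$ at $h_i$ can easily be inconsistent. Second, ``iterating median operations$\ldots$ should produce an element violating$\ldots$'' is left unverified and it is not clear what the intended contradiction is. The repair is cheaper than what you sketch and needs no medians. After Ramsey and $\dim X\le D$ give an infinite pairwise non-crossing family $(h_i)_{i\ge1}\subset S$, note that for $i\neq j$ non-crossing, the consistency of both $\alpha$ and $\beta$ (each of which disagrees with the other at $h_i$ and $h_j$) eliminates two of the four joint orientations and leaves exactly those with $\alpha(h_i)\subsetneq\alpha(h_j)$ or $\alpha(h_j)\subsetneq\alpha(h_i)$; thus $\{\alpha(h_i)\}_i$ is totally ordered by inclusion. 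Extract an infinite monotone subchain, say $\alpha(h_1)\supsetneq\alpha(h_2)\supsetneq\cdots$ (the increasing case is symmetric with $\beta$ in place of $\alpha$). Pick any $\gamma\in[\alpha]_E$ with $\gamma(h_1)\neq\alpha(h_1)$, which exists since $h_1\in\H_\sim(\alpha)$. Because $\gamma$ disagrees with $\alpha$ on only finitely many hyperplanes, $\gamma(h_N)=\alpha(h_N)$ for some $N$; but then $\gamma(h_1)\cap\gamma(h_N)$ is contained in $(\R(X)^{(0)}\setminus\alpha(h_1))\cap\alpha(h_N)=\emptyset$ because $\alpha(h_N)\subsetneq\alpha(h_1)$, contradicting $\gamma\in\R(X)$. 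With that replacement your proof of smoothness, and hence of the theorem via Proposition \ref{prop:intro smooth median graph}, is complete.
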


\begin{proof}
    Since $X$ is countable, $\R(X)$ is a compact metric space, hence Polish. Since the set $\H(X)$ of hyperplanes is countable, every $G_{\R(X)}$-component is a countable median graph (see Remark \ref{rem:Roller boundary}) and the corresponding CAT(0) cube complex has dimension at most $D$. For any $x,y \in \R(X)$, we have
    \begin{align*}
        (x,y)\in G_{\R(X)} \iff \exists\,h\in \H(X),\,x(h)\neq y(h) \,\wedge\,\big[\forall\, k \in \H(X)\setminus\{h\},\,x(k)=y(k)\big].
    \end{align*}
    Hence, $G_{\R(X)}$ is Borel. By Proposition \ref{prop:smooth median graph}, it is enough to prove that $E_{G_{\R(X)}}^{\R(X)}$ is smooth in what follows. We set $E = E_{G_{\R(X)}}^{\R(X)}$ for brevity.

    Fix $o \in X^{(0)}$. For any $h \in \H(X)$, fix the choice of orientation for $h$ by $o \in h^-$ and define $\overline{h^-}, \overline{h^+} \subset \R(X)$ by $\overline{h^-} = \{x \in \R(X) \mid x(h) = h^-\}$ and $\overline{h^+} = \{x \in \R(X) \mid x(h) = h^+\}$. By \cite[Proposition 2.9]{Gen21}, for any $x \in \R(X)$, there exists a unique point $\xi_x \in [x]_E$ such that the hyperplanes separating $o$ and $\xi_x$ are precisely the hyperplanes separating $o$ and $[x]_E$. Define a map $f \colon \R(X) \to \R(X)$ by $f(x) = \xi_x$. By uniqueness of $\xi_x$, the map $f$ satisfies $x \,E_{G_{\R(X)}}^{\R(X)}\, y \iff f(x) = f(y)$ for any $x,y \in \R(X)$. 
    
    Let $x \in \R(X)$. Since the hyperplanes separating $o$ and $\xi_x$ are precisely the hyperplanes separating $o$ and $[x]_E$, we have for any $h \in \H(X)$,
    \begin{align}\label{eq:smooth 1}
        \xi_x(h) =
        \begin{cases}
        h^+ 
        & {\rm if~} [x]_E \subset \overline{h^+},\\
        \alpha_o(h)=h^- 
        &{\rm if~} [x]_E \cap \overline{h^+}\neq\emptyset {\rm ~and~} [x]_E \cap \overline{h^-} \neq\emptyset,\\
        h^-
        & {\rm if~}[x]_E\subset \overline{h^-}.
        \end{cases}
    \end{align}
    This implies $f^{-1}(\overline{h^-}) = \{x\in \R(X) \mid [x]_E \cap \overline{h^-} \neq\emptyset\} = \big[\overline{h^-}\big]_E$. Hence, $f^{-1}(\overline{h^-})$ is Borel for any $h \in \H(X)$ by countability of $E$ and Theorem \ref{thm:countable to one}. Hence, $f$ is Borel, which implies that $E_{G_{\R(X)}}^{\R(X)}$ is smooth.
\end{proof}



\newcommand{\etalchar}[1]{$^{#1}$}
\providecommand{\bysame}{\leavevmode\hbox to3em{\hrulefill}\thinspace}
\providecommand{\MR}{\relax\ifhmode\unskip\space\fi MR }
\providecommand{\MRhref}[2]{%
  \href{http://www.ams.org/mathscinet-getitem?mr=#1}{#2}
}
\providecommand{\href}[2]{#2}

\vspace{5mm}

\noindent  Department of Mathematics, Vanderbilt University, Nashville 37240, U.S.A.

\noindent E-mail: \emph{koichi.oyakawa@vanderbilt.edu}

\end{document}